\documentclass[11pt, reqno]{amsart}
\usepackage{a4,amsmath,amssymb,amscd,verbatim,bbm,graphicx,enumerate, blkarray}

\usepackage{amsfonts,amsthm}
\usepackage[colorlinks,citecolor=cyan,linkcolor=magenta]{hyperref}
\usepackage{graphicx}
\usepackage{bmpsize}
\usepackage{MnSymbol}
\usepackage{color}
\usepackage{mathrsfs}

\usepackage[nameinlink]{cleveref}

\newtheorem{theorem}{Theorem}
\newtheorem{proposition}[theorem]{Proposition}
\newtheorem{corollary}[theorem]{Corollary}
\newtheorem{lemma}[theorem]{Lemma}

\theoremstyle{definition}

\newtheorem{definition}[theorem]{Definition}

\newtheorem{remark}[theorem]{Remark}

\numberwithin{equation}{section}
\numberwithin{theorem}{section}


\newcommand{\G}{\Gamma}
\renewcommand{\d}{\delta}
\newcommand{\D}{\Delta}

\renewcommand{\k}{\kappa}

\renewcommand{\l}{\lambda}

\renewcommand{\r}{\rho}


\newcommand{\Dc}{{\mathcal D}}

\newcommand{\C}{{\mathbb C}}

\newcommand{\R}{{\mathbb R}}

\newcommand{\Z}{{\mathbb Z}}


\newcommand{\calD}{\mathcal{D}}

\newcommand{\calT}{\mathcal{T}}


\newcommand{\scrD}{\mathscr{D}}



\newcommand{\Hy}{\mathbb{H}}





\newcommand{\al}{\alpha}
\newcommand{\gam}{\gamma}

\newcommand{\del}{\delta}

\newcommand{\Del}{\Delta}
\newcommand{\ep}{\epsilon}
\newcommand{\thet}{\theta}

\newcommand{\lam}{\lambda}

\newcommand\PSL{\operatorname{PSL}}

\newcommand\Aut{\textnormal{Aut}}

\newcommand{\bs}{\backslash}

\newcommand{\ra}{\rightarrow}

\def\({\left(}
\def\){\right)}

\def\l\{{\left\{}
\def\r\}{\right\}}
\def\wt{\widetilde}
\def\wh{\widehat}
\def\gm{\mathfrak{g}}
\def\frakD{\mathfrak{D}}
\def\frakR{\mathfrak{R}}
\def\wbar{\overline}
\def\ov{\overline}
\def\id{{\rm id}}

\def\Cay{{\rm Cay}}
\def\Leb{{\rm Leb}}
\def\ev{{\rm ev\,}}
\def\1{{\bf 1}}

\def\gr{{\rm gr}}
\def\conj{{\bf conj}}

\def\ac{{\rm ac}}
\def\sing{{\rm sing}}

\def\H{{\mathbb H}}

\def\gf{{\frak g}}
\def\bS{{\bf S}}

\def\CAT{{\rm CAT}}

\def\Dil{{\rm Dil}}

\DeclareMathOperator{\diam}{diam}

\newcommand\numberthis{\addtocounter{equation}{1}\tag{\theequation}}

\newtheorem*{remark*}{Remark}

\title[Marked length spectrum rigidity from rigidity on subsets]{Marked length spectrum rigidity from rigidity on subsets}

\author{\small{Stephen Cantrell and Eduardo Reyes}}

\begin{document}
\maketitle
\begin{abstract}
We introduce a new method for studying length spectrum rigidity problems based on a combination of ideas from dynamical systems and geometric group theory. This allows us to compare the marked length spectrum of metrics and distance-like functions coming from various 
geometric origins. Using our new perspective, we provide concise proofs of well-known length spectrum rigidity results and are able to extend classical results to a variety of new settings. Our methods rely on studying Manhattan curves and a coarse geometric analogue of Teichm\"uller space equipped with a symmetrized version of the Thurston metric.
\end{abstract}
\maketitle

\section{Introduction}

Let $S$ be a closed surface of genus at least $2$ and $\gm$ a Riemannian metric on $S$ with negative sectional curvatures. The closed oriented geodesics on $(S,\gm)$ are in bijection with the conjugacy classes (which we denote by $\conj$) in the fundamental group $\G = \pi_1(S)$ of $S$. Let $
\ell_\gm : \conj \to \R_{> 0}
$
be the function that sends a conjugacy class to the length of the corresponding closed geodesic on $(S,\gm)$.
A well-known and celebrated result proved by Otal \cite{otal} and independently by Croke \cite{croke} states that, if $\gm_\ast$ is another negatively curved Riemannian metric on $S$ and $\ell_\gm[x] = \ell_{\gm_\ast}[x]$ for all $[x] \in \conj$, then $(S,\gm)$ and $(S,\gm_\ast)$ are isometric. This result is referred to as \emph{marked length spectrum rigidity}. 

This phenomenon has been extended to more general types of metrics on surfaces such as translation surfaces \cite{dlr}, flat metrics \cite{BL}, and non-positively curved Riemannian metrics with finitely many singularities \cite{con}. Also, Wu recently proved that a large class of surface amalgams are length spectrum rigid \cite{wu}. 
In higher dimensions, marked length spectrum rigidity holds for closed negatively curved Riemannian metrics when one of the metrics is locally symmetric \cite{ham}. A breakthrough result of Guillarmou and Lefeuvre \cite{guilef} asserts that for Riemannian manifolds with Anosov geodesic flow and non-positive curvature, the marked length spectrum locally determines the metric. 

In settings where marked length spectrum rigidity is known or conjectured to hold, there are many other accompanying rigidity questions. For example:
should we need to assume that two metrics have the same marked length spectrum on \textit{all} conjugacy classes to deduce that they are the same or does is suffice to assume equality on a subset of conjugacy classes? If so, which subsets of conjugacy classes determine the full marked length spectrum?

This problem has been considered by many authors and we now outline some known results in the setting of negatively curved surfaces (i.e. $\G = \pi_1(S)$) from above.

We first note that it follows from, although is not explicitly stated in the work of Bonahon \cite{bonahon}, that if $N \trianglelefteq \G$ is an infinite normal subgroup of $\G$ and the marked length spectra of $\gm$ and $\gm_\ast$ agree on conjugacy classes represented by elements of $N$, then they are isometric. Recently, Hao \cite{hao} improved this result (in a more general setting) by showing that if $H < \G$ is a subgroup such that the limit set of $H$ is all of $\partial \widetilde{S}$ where $\widetilde{S}$ is the universal cover of $S$ and $\ell_\gm, \ell_{\gm_\ast}$ agree on $H$, then $\gm,\gm_\ast$ are isometric. Gogolev and Rodriguez Hertz have also recently shown that the length spectrum restricted to closed geodesics lying in a fixed homology class determines the full length spectrum of the metric \cite{gogolev.hertz}. The set $E\subset \conj$ does not have to be induced by a subgroup of $\G$. It is well known (thanks to Fenchel–Nielsen coordinates) that the isometry class of a hyperbolic surface is determined by the lengths of finitely many closed geodesics. For negatively curved Riemannian metrics on closed surfaces, Sawyer \cite{sawyer} 
showed that if the complement $E^c$ of $E \subset \conj$ grows sub-exponentially in the sense that
\begin{equation}\label{eq.seg}
\lim_{T\to\infty}{ \frac{1}{T} \log (\#\{ [x] \in E^c: \ell_{\gm}[x] \leq T\})} = 0
\end{equation}
and $\ell_\gm, \ell_{\gm_\ast}$ agree on $E$, then $\gm, \gm_\ast$ are isometric.  The property of having sub-exponential growth as in (\ref{eq.seg}) is independent of the metric $\gm$. Sawyer's result also follows from an asymptotic formula for the number of conjugacy classes with approximately the same lengths with respect to two metrics $\mathfrak{g}, \mathfrak{g}_\ast$, due to Schwartz and Sharp in constant negative curvature \cite[Thm.~1]{schwartz.sharp} and Dal'bo in variable negative curvature \cite{dalbo}. In both cases, the asymptotic formula is proved using thermodynamic formalism for the geodesic flows. 

Similar marked length spectrum rigidity questions have been considered in other settings. For example, Smillie and Vogtmann \cite{smillie.vogtmann} showed that, in contrast to Teichm\"uller space, there does not exist a finite set of conjugacy classes that can be used to distinguish points in Outer Space. Also, for Kleinian surface groups, Bridgeman and Canary \cite{bridgeman.canary} showed that the collection of  simple closed geodesics on the surface is enough to determine the isometry class of a metric. The question as to whether hyperbolic metrics on a closed surface are determined by their lengths on simple closed geodesics remains open.\\

In this paper we develop a new approach to studying length spectrum rigidity problems such as the ones mentioned above. Our methods combine ideas from geometric group theory and dynamical systems. There are two main benefits to our method. The first is that we can produce concise and simple proofs of well-known rigidity results and secondly, we can generalise these results to many new metrics and geometries.

The problems we consider include:
\begin{itemize}
    \item Subgroup and subset rigidity (i.e. which subsets of conjugacy classes can be used to determine the full length spectrum of a metric as above);
    \item Volume rigidity for Riemannian manifolds (i.e. to what extent does the length spectrum determine the volume of the manifold);
    \item Understanding drops in exponential growth rate for (quasi-convex) subgroups and subsets; and, 
    \item Obtaining uniform estimates for the growth rates of subsets of conjugacy classes that determine the length spectrum (i.e. how large, in terms of a growth rate, is the set of conjugacy classes on which a pair of metrics have the same/close length spectrum).
\end{itemize}

There are many other works considering similar questions related to length spectrum and volume rigidity, see  for example \cite{bou}, \cite{conlaf}, \cite{croke.dairbekov},\cite{gogolev.hertz}, \cite{hao}.

As discussed above, our ideas allow us to extend rigidity results to a variety of new settings and to do so we consider general coarse negatively curved metric spaces. This allows us to compare metrics that are not necessarily Riemannian, and that are defined in terms of different geometric assumptions. Most of the results mentioned above rely on ergodic theoretic machinery associated to a geodesic flow, or at least, on encoding marked length spectra via continuous Busemann cocycles or geodesic currents. As these tools are in general not available outside the Riemannian or low-dimensional setting we need new ideas to tackle these problems. Our proofs are based around using the authors' work on Manhattan curves and a coarse geometric version of Techm\"uller space, $(\scrD_\G, \Delta)$ (see Section \ref{sec.hyp}). This perspective allows us to provide concise proofs of results that were first proved using heavy machinery.

Before stating our results in full generality (in the coarse geometry setting), we present some applications of our general results.

\subsection{Applications of our main theorems}

Our first example is the following theorem which is a homological version of the recent rigidity result of Guillarmou and Lefeuvre  \cite{guilef}.

\begin{theorem}\label{thm.volgl}
    Let $\gm_\ast$ be a smooth, negatively curved Riemannian metric on a closed manifold $M$. For any $N>0$ large enough there exists $\ep>0$ such that for any smooth metric $\gm$ on $M$ satisfying $\|\gm-\gm_\ast\|_{C^N}<\ep$ the following holds: if 
$\ell_{\gm_\ast}[x] \leq \ell_{\gm}[x]$ for every conjugacy class $[x]$ represented by an element in a fixed homology class of $\pi_1(M)$, then 
$$ \textnormal{Vol}(M,\gm_\ast) \leq \textnormal{Vol}(M,\gm).$$
\end{theorem}
This result shows that Guillarmou and Lefeuvre's result holds when we only look at closed geodesics lying in a prescribed homology class. Our methods do not provide a new proof of Guillarmou and Lefeuvre's result but instead allow us to upgrade it. Similarly, our result also generalises a recent result of Gogolev and Rodriguez Hertz \cite[Theorem 6.1]{gogolev.hertz} which in turn generalises a well known result of Croke and Dairbekov \cite[Thm.~1.1]{croke.dairbekov}.

Our proof of this result revolves around studying \textit{Manhattan curves}: curves that encode information about a pair of metrics/geometries. See Section \ref{sec.prelim} for the definition and a discussion on Manhattan curves. 
To illustrate how we use Manhattan curves we briefly describe our approach to proving Theorem \ref{thm.volgl}. 
\begin{enumerate}
    \item First we consider the Manhattan curve for the lifts of $\gm, \gm_\ast$ on $\G=\pi_1(M)$ and, using results from \cite{cantrell.reyes} and \cite{cantrell.tanaka.1}, we show that it is strictly convex (Proposition \ref{thm.stricconvexvsstraight}).
    \item Then using large deviation estimates 
    we deduce rigidity results for $\gm, \gm_\ast$ in terms of growth rates of subsets (Lemma \ref{lem.ineq}).
    \item Lastly we use results from \cite{cantrell.reyes} that shows that the Manhattan curve can be seen as a path that interpolates between metrics in $\scrD_\G$ (a large collection of metrics including $\gm, \gm_\ast$). Applying the results from Step $(2)$ along this path we deduce global bounds on the length spectrum from bounds on homology classes (see the proof of Theorem \ref{thm.domination}).
\end{enumerate}
This method appears to be a unique approach to length spectrum rigidity problems which usually rely on properties of geodesic flows or geodesic currents.

Using our approach we also provide a new proof of the following result.
\begin{theorem}\label{thm.gapword}
    Suppose that $\G$ is a non-elementary hyperbolic group equipped with a finite symmetric generating set $S$ with corresponding word metric $d_S$ and exponential growth rate $v_S$. Then for any quasi-convex subgroup $H < \G$ of infinite index we have that
    \[
    \limsup_{T\to\infty} \frac{1}{T} \log \left( \#\{x \in H : d_S(o,x) < T\} \right)< v_S.
    \]
\end{theorem}

Previous proofs of this result either involve Patterson-Sullivan theory \cite[Cor.~2.8]{MYJ} or counting for regular languages \cite[Thm.~1.1]{dfw}.
Our proof is purely geometric, does not use any boundary theory nor regular languages and relies on the metric properties of $(\scrD_\G, \Delta)$. We in fact prove a more general result, Theorem \ref{thm.growthrateE} from which we can deduce further interesting results concerning exponential drops in growth rate. For example the following result is an immediate corollary of our work.
\begin{theorem}\label{thm.growthtrees}
    Suppose that $\G$ is either a free group or surface group. Let $\G$ admit a small action on the $\R$-tree $\mathcal{T}$. Let $d_\mathcal{T}$ denote the lift of the metric on $\mathcal{T}$ to $\G$, i.e. $d_\mathcal{T}(x,y)$ is the distance in $\calT$ between $x \cdot p$ and $y \cdot p$ where $p \in \mathcal{T}$ is a fixed base point. Then for any word metric $d_S$ on $\G$ and any $L > 0$ we have that
    \[
    \limsup_{T\to\infty} \frac{1}{T} \log \left( \#\{[x] \in \conj: \ell_{\mathcal{T}}[x] < L \text{ and } \ell_S[x] < T \}\right) < v_S.
    \]
    That is, the conjugacy classes of bounded translation length for $\mathcal{T}$ have small exponential growth rate for any word metric on $\G$.
\end{theorem}

We now move on to state the general results from which the above theorems follow and then present further corollaries of our work.


\subsection{The coarse geometric setting}
We now discuss our abstract general results.
Throughout this work, we fix a non-elementary hyperbolic group $\G$, and let $\conj$ denote its set of conjugacy classes. We will work with the collection $\Dc_\G$ of pseudo metrics on $\G$ that are Gromov hyperbolic, $\G$-invariant, and quasi-isometric to a word metric for a finite generating set. In this context, the analogue of metrics being isometric/homothetic is the rough similarity equivalence relation: we say that $d,d_\ast \in \Dc_\G$ are \emph{roughly similar} if there exist $C, \tau >0$ such that $|d(x,y) - \tau d_\ast(x,y)| \le C$ for all $x,y \in \G$. When $d,d_\ast$ are roughly similar with $\tau = 1$ we say that $d,d_\ast$ are \textit{roughly isometric}. 

If $d\in \calD_\G$ and $[x]\in \conj$ is the conjugacy class of $x\in \G$, we consider the quantity
\[
\ell_d[x] := \lim_{n\to\infty} \frac{d(o,x^n)}{n}, 
\]
where $o \in \G$ is the identity element. This number is well-defined for every $[x]\in \conj$, and the function $\ell_d: \conj \ra \R$ is the \emph{(stable) translation length} of $d$. 
For example, if $\gm$ is a negatively curved metric on the closed manifold $M$ with $\G=\pi_1(M)$ 
and $p$ is a point in the universal cover $\widetilde{M}$ of $M$, then $d(x,y)=d^p_{\gm}(x,y):=d_{\tilde\gm}(xp,yp)$ defines a metric on $\calD_\G$, where $d_{\tilde\gm}$ is the length distance on $\widetilde{M}$ induced by the $\G$-invariant lift $\tilde{\gm}$ of $\gm$. In this case we have $\ell_d=\ell_\gm$.

Given $d\in \calD_\G$, the \emph{exponential growth rate} of $E\subset \conj$ with respect to $d$ is the non-negative number 
$$v_d(E):=\limsup_{T\to \infty}{\frac{1}{T} \log (\#\{[x]\in E \colon \ell_d[x]\leq T\})}.$$

The \emph{exponential growth rate} of $d$ is $v_d:=v_d(\conj)$. Note that $v_d$ is positive for every $d\in \calD_\G$, and that it can also be recovered as the limit
$$v_d=\lim_{T\to \infty}{\frac{1}{T} \log (\#\{x\in \G \colon d(o,x)\leq T\})},$$
see Proposition \ref{prop.samegrowthrate}. Marked length spectrum rigidity holds for pseudo metrics belonging to $\calD_\G$ in the following sense: if $d,d_\ast \in \calD_\G$, then they are roughly isometric if and only if $\ell_d[x]=\ell_{d_\ast}[x]$ for every $[x]\in \conj$, see e.g.~\cite[Thm.~1.2]{cantrell.tanaka.1}.

\subsection{Rigidity on subsets}\label{sec.ros}
Now we state our general  results about marked length spectrum rigidity from rigidity on subsets. The first one is as follows. 

\begin{theorem}\label{thm.main}
Let $\G$ be a non-elementary hyperbolic group. There exists a function $\beta: \Dc_\G \times \Dc_\G \to  \mathbb{R}_{\ge 0}$ such that the following holds.
    For any pair $d,d_\ast \in \Dc_\G$ of pseudo metrics with exponential growth rates $v_d, v_{d_\ast} >0$, the following are equivalent:
    \begin{enumerate}
        \item there   exists $x\in\G$ such that $v_d \ell_d[x] \neq v_{d_\ast} \ell_{d_\ast}[x]$; and,
        \item $\beta(d,d_\ast) > 0$.
    \end{enumerate}
    Furthermore, when either (hence both) of these conditions are satisfied we have 
      \begin{equation}\label{eq.ineqbeta}
v_d(E) \le v_d - \beta(d,d_\ast) 
\end{equation}
for any function $f:\R \ra \R_{\geq 0}$ such that $f(t)=o(t)$ as $t \to \infty$ and any subset $E \subset \conj$ for which $|v_d \ell_d[x] - v_{d_\ast} \ell_{d_\ast}[x] |\leq f(\ell_d[x])$ for all $[x]\in E$.
\end{theorem}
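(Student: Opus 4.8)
The plan is to reduce the statement to a single strict inequality between exponential growth rates, by passing to the averaged pseudo metric
\[
g \;:=\; \tfrac{1}{2}\bigl(v_d\, d + v_{d_\ast}\, d_\ast\bigr).
\]
This $g$ is $\G$-invariant and coarsely comparable to a word metric, and it lies in $\Dc_\G$ (the class $\Dc_\G$ is closed under convex combinations, as $(\G,g)$ is a quasi-geodesic space quasi-isometric to the Cayley graph of $\G$, so the $4$-point condition transfers). Applying the Cauchy--Schwarz inequality to $\sum_{x\in\G} e^{-s g(o,x)} = \sum_{x\in\G} e^{-\frac{s}{2}v_d d(o,x)}\,e^{-\frac{s}{2}v_{d_\ast}d_\ast(o,x)}$ bounds it by $\bigl(\sum_{x}e^{-s v_d d(o,x)}\bigr)^{1/2}\bigl(\sum_{x}e^{-s v_{d_\ast}d_\ast(o,x)}\bigr)^{1/2}$; since $v_d d$ and $v_{d_\ast}d_\ast$ have exponential growth rate one, both factors are finite for every $s>1$, so $v_g\le 1$, while $v_g>0$ because $g$ is comparable to a word metric. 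I would then define
\[
\beta(d,d_\ast) \;:=\; v_d\bigl(1 - v_g\bigr)\;\in\;[0,\,v_d).
\]

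Next I would prove the ``Furthermore'' inequality by a direct count. Because $\ell_g[x]=\tfrac12\bigl(v_d\ell_d[x]+v_{d_\ast}\ell_{d_\ast}[x]\bigr)$ for every $[x]\in\conj$, fixing $\e>0$ and taking any admissible $E$ (i.e.\ $|v_d\ell_d[x]-v_{d_\ast}\ell_{d_\ast}[x]|\le f(\ell_d[x])$ on $E$ with $f(t)=o(t)$), every $[x]\in E$ with $\ell_d[x]$ large enough satisfies $\ell_g[x]\le(1+\tfrac{\e}{2})v_d\,\ell_d[x]$. As only finitely many conjugacy classes have bounded translation length, this gives, for all $T$,
\[
\#\{[x]\in E:\ell_d[x]\le T\}\;\le\;\#\{[x]\in\conj:\ell_g[x]\le(1+\tfrac{\e}{2})v_d T\}+O(1),
\]
and Proposition~\ref{prop.samegrowthrate} applied to $g\in\Dc_\G$ bounds the right-hand side by $e^{(v_g+o(1))(1+\e/2)v_d T}$. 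Taking $\limsup_{T\to\infty}\tfrac1T\log(\cdot)$ yields $v_d(E)\le v_g(1+\tfrac{\e}{2})v_d$ for every $\e>0$, hence $v_d(E)\le v_g v_d = v_d-\beta(d,d_\ast)$; note the same $\beta$ works simultaneously for all admissible $f$ and $E$.

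Finally, with this definition, the equivalence of (1) and (2) becomes the claim that $v_g<1$ if and only if condition (1) holds. By marked length spectrum rigidity in $\Dc_\G$ (\cite[Thm.~1.2]{cantrell.tanaka.1}), the negation of (1), i.e.\ $v_d\ell_d[x]=v_{d_\ast}\ell_{d_\ast}[x]$ for all $x$, is equivalent to $v_d d$ and $v_{d_\ast}d_\ast$ being roughly isometric, hence to $d,d_\ast$ being roughly similar; in that case $g(o,x)=v_d d(o,x)+O(1)$, so $v_g=1$ and $\beta(d,d_\ast)=0$, proving $(2)\Rightarrow(1)$. The reverse implication $(1)\Rightarrow(2)$ is the remaining, and I expect the main, difficulty: one must show $v_g<1$ whenever $d,d_\ast$ are \emph{not} roughly similar. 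This is a strict-convexity/rigidity phenomenon — that the growth rate of the average of two $\Dc_\G$ pseudo metrics of equal growth rate drops strictly unless they are roughly isometric, equivalently a strict convexity property of the Manhattan curve of the pair $(v_d d,v_{d_\ast}d_\ast)$. One route is to analyse the equality case of the Cauchy--Schwarz estimate at the critical exponent $s=1$: equality there should force the associated conformal (Patterson--Sullivan-type) densities of $v_d d$ and $v_{d_\ast}d_\ast$ on $\partial\G$ to be proportional, which a Busemann-cocycle argument can then promote to a rough isometry, contradicting non-similarity; alternatively one invokes Manhattan curve rigidity for hyperbolic groups directly. Granting this strict inequality, the three steps combine to give the theorem.
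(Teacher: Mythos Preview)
Your approach is correct in outline and close in spirit to the paper's, but it is a special (suboptimal) instance of the same idea. After normalising to $v_d=v_{d_\ast}=1$, the paper also passes to an interpolated pseudo metric, but instead of the fixed midpoint $g=\tfrac12(d+d_\ast)$ it uses $d_\xi=\xi d_\ast+\theta_{d_\ast/d}(\xi)\,d$ at the unique $\xi\in(0,1)$ with $\theta_{d_\ast/d}'(\xi)=-1$; it then defines $\beta(d,d_\ast)=1-(\xi+\theta_{d_\ast/d}(\xi))=\sup_{t\in[0,1]}(1-t-\theta_{d_\ast/d}(t))$. Your $\beta$ equals $1-t-\theta_{d_\ast/d}(t)$ at the fixed point $t_0$ of $\theta_{d_\ast/d}$ (since $v_g=2t_0$), so $\beta_{\text{yours}}\le\beta_{\text{paper}}$ always, with equality only when the Manhattan curve happens to be symmetric about the diagonal. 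This matters because the paper later proves that its $\beta$ is \emph{sharp} for pairs of word metrics (Theorem~\ref{thm.word}), and the continuity statements in Theorem~\ref{thm.continuous} are phrased for that specific $\beta$. For the existence statement as written, however, your function is perfectly admissible and your counting argument is clean.

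On the step you flag as the main difficulty, you are overestimating it. You do not need an equality analysis of Cauchy--Schwarz or Patterson--Sullivan theory: since $\theta=\theta_{d_\ast/d}$ is convex on $[0,1]$ with $\theta(0)=1$, $\theta(1)=0$, the chord $t\mapsto 1-t$ lies above $\theta$; if $\theta(\tfrac12)=\tfrac12$ then $\theta$ touches the chord at an interior point, forcing $\theta(t)=1-t$ on all of $[0,1]$ by convexity. By the Manhattan curve rigidity already available to you (\cite[Thm.~1.1]{cantrell.tanaka.1}, i.e.\ Theorem~\ref{thm.manprops}(2)), this is equivalent to $d,d_\ast$ being roughly similar. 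Hence $v_g<1$ iff $d,d_\ast$ are not roughly similar, closing the gap. The paper establishes the stronger global strict convexity statement (Proposition~\ref{thm.stricconvexvsstraight}) because it needs the existence of the optimal $\xi$, but for your midpoint version the elementary convexity argument above suffices.
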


From this result, we immediately deduce that if $d,d_\ast\in \calD_\G$ are such that there exists $E \subset \conj$ satisfying:
\begin{enumerate}
\item
$
v_d(E) = v_d; \ \text{and,} 
$

    \item there exists a function $f : \mathbb{R} \to \mathbb{R}_{\geq 0}$ such that $f(t) = o(t)$ as $t\to \infty$ and $|v_d\ell_d[x] - v_{d_\ast}\ell_{d_\ast}[x]| \le f(\ell_d[x])$ for all $[x] \in E$,
\end{enumerate}
then $d$ and $d_\ast$ are roughly similar.

\begin{remark*}
 $1)$  This result applies to metrics with Busemann (quasi-)cocycles that are not necessarily continuous (which is not the case for the other results mentioned above).
 
\noindent $2)$ We have an explicit expression for $\beta(d,d_\ast)$ in terms of the \textit{Manhattan curve} for $d,d_\ast$, see Subsection \ref{sec.manhattan} and Definition \ref{def.beta}.

\noindent $3)$ We also have a version of this result that asks that the pseudo metrics (as opposed to their translation length functions) are close on a subset of group elements, see Theorem \ref{thm.disp}.

\noindent $4)$ Specialized to the case that $d,d_\ast$ are induced by negatively curved Riemannian metrics on a closed surface, this result provides a positive answer to \cite[Conj.~5.4]{sawyer}.

\end{remark*}

It is natural to ask whether $v_d - \beta(d,d_\ast)$ is the optimal upper bound in \eqref{eq.ineqbeta}, i.e. does there exist a set $E$ (satisfying the necessary conditions) such that $v_d(E)=v_d - \beta(d,d_\ast)$. This is the case for metrics coming from Teichm\"uller space (or more generally, from negatively curved surfaces), by Schwartz and Sharp \cite{schwartz.sharp} in constant negative curvature and Dal'bo \cite{dalbo} in variable negative curvature. In Section \ref{sec.auto} we prove that $v_d - \beta(d,d_\ast)$ is attained for pairs of word metrics related by an automorphism, see Theorem \ref{thm.word} and Corollary \ref{coro.betaauto}.

To prove Theorem \ref{thm.main} we use the Manhattan curve for $d,d_\ast$ to explicitly estimate the growth rate $v_d(E)$. The continuity properties of the Manhattan curves allow us to have good control on the quantity $\beta(d,d_\ast)$ as $d$ and $d_\ast$ vary. To make this precise, we recall that the \emph{space of metric structures} on $\G$ is the set $\scrD_\G$ of rough similarity equivalence classes in $\Dc_\G$, and the \emph{symmetric Thurston distance} on $\scrD_\G$ is given by
\begin{equation*}
\Delta([d],[d_\ast]) = \log\left( \Dil(d,d_\ast) \Dil(d_\ast,d) \right), \ \ \text{where} \ \ \Dil(d,d_\ast) = \sup_{[x] \in \conj'}\frac{\ell_d[x]}{ \ell_{d_\ast}[x]}.
\end{equation*}
Here $\conj'\subset \conj$ is the set of non-torsion conjugacy classes and $[d]$ denotes the rough similarity class of $d\in \calD_\G$. The metric space $(\scrD_\G,\Del)$ was studied by the second author in \cite{reyes} and by both authors in \cite{cantrell.reyes}. Continuity of $\beta$ is encoded in the following theorem.
\begin{theorem} \label{thm.continuous}
    Define $\overline{\beta} : \scrD_\G \times \scrD_\G \to \R_{\ge 0}$ by
    \[
\overline{\beta}([d],[d_\ast]) = \frac{\beta(d, d_\ast)}{v_d}
    \]
    where $\beta(d,d_\ast)$ is the function from Theorem \ref{thm.main}.
    Then $\overline{\beta}$ is well-defined and is
    \begin{enumerate}
\item symmetric in its variables;
\item continuous in each variable with respect to the metric $\Delta$; and, 
\item for any $d,d_\ast \in \calD_\G$ we have
    \[
    \overline{\beta}([d],[d_\ast]) \le \frac{e^{\frac{1}{2} \Delta([d],[d_\ast])}-1}{e^{\frac{1}{2} \Delta([d],[d_\ast])}+1}=\tanh \left(\frac{1}{4}\Del([d],[d_\ast])\right).
    \]
    Furthermore, this inequality is strict if  and only if $[d] \neq [d_\ast]$.

\end{enumerate}
\end{theorem}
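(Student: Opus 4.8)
The plan is to read off all of (1), (2), (3) from the geometry of the Manhattan curve, using the explicit formula for $\beta$ from Definition~\ref{def.beta}. It is convenient to normalise: for $d\in\Dc_\G$ put $\wh d:=v_d\cdot d\in\Dc_\G$, so that $v_{\wh d}=1$, $\ell_{\wh d}=v_d\ell_d$, and $\Dil(\wh d,\wh d_\ast)=\tfrac{v_d}{v_{d_\ast}}\Dil(d,d_\ast)$. With the notation of Subsection~\ref{sec.manhattan}, the Manhattan curve $\Cc(\wh d,\wh d_\ast)$ is a convex curve through $(1,0)$ and $(0,1)$, whose arc between these points we write as the graph $\{(a,\theta(a)):a\in[0,1]\}$ of a convex, decreasing $\theta=\theta_{\wh d,\wh d_\ast}$ with $\theta(0)=1$, $\theta(1)=0$. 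The definition of $\beta$, transported through the rescaling $(a,b)\mapsto(a/v_d,b/v_{d_\ast})$ relating $\Cc(d,d_\ast)$ to $\Cc(\wh d,\wh d_\ast)$, reads
\[
\overline{\beta}([d],[d_\ast])=1-m(d,d_\ast),\qquad m(d,d_\ast):=\min_{a\in[0,1]}\bigl(a+\theta_{\wh d,\wh d_\ast}(a)\bigr)=\min\{\,a+b:(a,b)\in\Cc(\wh d,\wh d_\ast),\ a,b\ge 0\,\},
\]
and $0\le m\le 1$, with $m=1$ precisely when this arc is the straight segment from $(1,0)$ to $(0,1)$.

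\emph{Items (1) and (2).} If $d'\in[d]$, $d_\ast'\in[d_\ast]$, then $\wh{d'}$ is roughly isometric to $\wh d$ and $\wh{d_\ast'}$ to $\wh d_\ast$ (a rough similarity becomes a rough isometry once both growth rates equal $1$); the Manhattan curve is unchanged under replacing either input by a roughly isometric pseudo metric, so $m$, hence $\overline{\beta}$, depends only on $[d],[d_\ast]$. For symmetry, $\Cc(\wh d_\ast,\wh d)$ is the reflection of $\Cc(\wh d,\wh d_\ast)$ across $\{a=b\}$, under which $a+b$ is invariant, so $m(d_\ast,d)=m(d,d_\ast)$. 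For continuity, fix $[d]$ and let $[d_\ast]$ vary: $v_{d_\ast}$ is continuous in the Thurston metric $\Delta$ (which controls ratios of translation lengths, hence growth rates; see \cite{reyes}), so $\wh d_\ast$ and, by the continuity of Manhattan curves (Subsection~\ref{sec.manhattan}, cf.\ \cite{cantrell.tanaka.1}), the curve $\Cc(\wh d,\wh d_\ast)$ vary continuously, uniformly on the compact arc in $[0,1]^2$; since $m$ is the minimum of $a+b$ over that arc, $m$ and $\overline{\beta}$ are continuous in $[d_\ast]$, and in $[d]$ by symmetry.

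\emph{Item (3), the inequality.} Set $p:=\Dil(\wh d,\wh d_\ast)$ and $r:=\Dil(\wh d_\ast,\wh d)$; then $p,r\ge 1$ (since $\Dil(d',d'')\ge v_{d''}/v_{d'}$) and $pr=\Dil(d,d_\ast)\Dil(d_\ast,d)=e^{\Delta([d],[d_\ast])}$. From $\ell_{\wh d}\le p\,\ell_{\wh d_\ast}$, $\ell_{\wh d_\ast}\le r\,\ell_{\wh d}$ (with the analogous estimates for basepoint distances up to additive error, which is all that enters the defining series), a short computation shows that for $a\in[0,1]$
\[
\max\Bigl\{\tfrac{1-a}{r},\,1-pa\Bigr\}\ \le\ \theta_{\wh d,\wh d_\ast}(a)\ \le\ \min\Bigl\{p(1-a),\,1-\tfrac{a}{r}\Bigr\}.
\]
Consequently $m(d,d_\ast)\ge\min_{a\in[0,1]}\bigl(a+\max\{(1-a)/r,\,1-pa\}\bigr)=\tfrac{p+r-2}{pr-1}$, the minimum being attained at the crossing point $a_0=\tfrac{r-1}{pr-1}\in[0,1]$ of the increasing and decreasing affine pieces. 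Hence $\overline{\beta}([d],[d_\ast])\le 1-\tfrac{p+r-2}{pr-1}=\tfrac{(p-1)(r-1)}{pr-1}$, and maximising $\tfrac{(p-1)(r-1)}{pr-1}=\tfrac{pr+1-(p+r)}{pr-1}$ over $p,r\ge 1$ with $pr=e^{\Delta}$ fixed — i.e.\ minimising $p+r$, which by AM--GM is at $p=r=e^{\Delta/2}$ — gives $\tfrac{(e^{\Delta/2}-1)^2}{e^{\Delta}-1}=\tfrac{e^{\Delta/2}-1}{e^{\Delta/2}+1}=\tanh\bigl(\tfrac14\Delta([d],[d_\ast])\bigr)$.

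\emph{Item (3), strictness, and the main difficulty.} If $[d]=[d_\ast]$ then $\Delta=0$ and both sides are $0$. If $[d]\ne[d_\ast]$, then equality above would force $p=r=e^{\Delta/2}$, and then equality in the envelope estimate would force $\theta_{\wh d,\wh d_\ast}$ to agree with $\psi(a):=\max\{(1-a)/r,1-pa\}$ at $a=0,\ a_0,\ 1$; since $\theta$ is convex and $\psi$ restricted to $[0,a_0]$ and to $[a_0,1]$ is affine, this forces $\theta\equiv\psi$ on $[0,1]$, so $\theta$ would have a corner at $a_0\in(0,1)$ (as $\Delta>0$) — contradicting the fact that the Manhattan curve is $C^1$ and is a straight segment precisely when $d,d_\ast$ are roughly similar, i.e.\ when $[d]=[d_\ast]$ \cite{cantrell.tanaka.1}. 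Thus the inequality is strict exactly when $[d]\ne[d_\ast]$. The crux — and the step I expect to demand the most care — is this strictness claim, which combines the rigidity/regularity dichotomy for Manhattan curves with the observation that equality throughout the estimates would manufacture a corner; a lesser technical point is isolating the precise continuity property of Manhattan curves with respect to $\Delta$ used in (2), which should be recorded in Subsection~\ref{sec.manhattan}.
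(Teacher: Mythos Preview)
Your argument is essentially the same as the paper's: normalise to growth rate $1$, read $\overline{\beta}$ as $1-\min_{[0,1]}(a+\theta(a))$, bound $\theta$ below by the piecewise--linear envelope coming from the two dilations, compute the corner value, and finish with AM--GM. For (1) the paper packages symmetry as Lemma~\ref{lem.sym}/Remark~\ref{rem.symb}, which is exactly your reflection observation. For (3), your strictness argument differs slightly: you force $\theta$ to coincide with the envelope and derive a corner, contradicting $C^1$; the paper instead notes that when $[d]\neq[d_\ast]$ the curve is \emph{strictly} convex (Proposition~\ref{thm.stricconvexvsstraight}), so $\theta(t)>F(t)$ on $(0,1)$ and the interior maximiser of $1-t-\theta(t)$ already gives a strict inequality in the first step of the chain. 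Both routes are valid and of comparable length.

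The one place your write--up is genuinely thinner than the paper is (2). The continuity of $[d_\ast]\mapsto\theta_{\wh d_\ast/\wh d}$ uniformly on $[0,1]$ is not something recorded in Subsection~\ref{sec.manhattan} or in \cite{cantrell.tanaka.1}; the paper proves it as a separate lemma (Lemma~\ref{lem.uc}), using the quantitative quasi--isometry constants from \cite[Thm.~1.1]{cantrell.reyes} to control $|\theta_{d_\ast/d_n}(a)-\theta_{d_\ast/d}(a)|$ explicitly. Your sketch (``Manhattan curves vary continuously'') is morally right but should be backed by this computation rather than a bare citation. A smaller cosmetic point: in your envelope bounds the roles of $p$ and $r$ are swapped relative to the paper's $s_1,s_2$, but since the final expression $(p-1)(r-1)/(pr-1)$ is symmetric this is harmless.
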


Note that in the Theorem \ref{thm.main}, the pseudo metrics $v_d d$ and $v_{d_\ast} d_\ast$ have exponential growth rate 1. To obtain marked length spectrum rigidity for pseudo metrics without this assumption, we slightly strengthen our requirements on the subset $E$.
By using a variation of the large deviation result \cite[Thm.~4.23]{cantrell.tanaka.1} we can prove the following.

\begin{theorem}\label{thm.main2}
Let $\G$ be a non-elementary hyperbolic group and take $d,d_\ast \in \Dc_\G$. Suppose that there exists a set $E \subset \conj$ such that
\begin{enumerate}
\item $v_d(E) = v_d$; 
\item $v_{d_\ast}(E)= v_{d_\ast}$; and,
\item there exists a function $f : \mathbb{R} \to \mathbb{R}_{\geq 0}$ such that $f(t) = o(t)$ as $t\to \infty$ and $|\ell_d[x] - \ell_{d_\ast}[x]| \le f(\ell_d[x])$ for all $[x] \in E$.
\end{enumerate}
Then, $d$ and $d_\ast$ are roughly isometric and $\ell_d[x] = \ell_{d_\ast}[x]$ for all $[x] \in \conj$.
\end{theorem}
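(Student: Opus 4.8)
The plan is to reduce the statement to the normalized situation governed by Theorem~\ref{thm.main}, after first showing that $d$ and $d_\ast$ have the same exponential growth rate. First, observe that it suffices to prove $\ell_d[x]=\ell_{d_\ast}[x]$ for every $[x]\in\conj$: granting this, the rough isometry of $d$ and $d_\ast$ is exactly the marked length spectrum rigidity for pseudo metrics in $\calD_\G$ recalled in the introduction (see \cite[Thm.~1.2]{cantrell.tanaka.1}). The obstruction to quoting Theorem~\ref{thm.main} verbatim is that hypothesis~(3) controls $|\ell_d[x]-\ell_{d_\ast}[x]|$ rather than $|v_d\ell_d[x]-v_{d_\ast}\ell_{d_\ast}[x]|$, and these two quantities differ by $|v_d-v_{d_\ast}|\,\ell_{d_\ast}[x]$, which is linear in $\ell_{d_\ast}[x]$ unless $v_d=v_{d_\ast}$. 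So the first step is to establish $v_d=v_{d_\ast}$.

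To do this, I would fix $\epsilon\in(0,1)$. Since $f(t)=o(t)$ there is $T_0>0$ with $f(t)\le\epsilon t$ for $t\ge T_0$, and $\{[x]\in\conj:\ell_d[x]<T_0\}$ is finite (because $d$ is quasi-isometric to a word metric, so its stable translation lengths are comparable to word-translation lengths, and a hyperbolic group has only finitely many conjugacy classes of bounded translation length). For $[x]\in E$ with $\ell_d[x]\ge T_0$, hypothesis~(3) gives $\ell_{d_\ast}[x]\le(1+\epsilon)\ell_d[x]$ and $\ell_d[x]\le(1-\epsilon)^{-1}\ell_{d_\ast}[x]$, so
\[
\#\{[x]\in E:\ell_d[x]\le T\}\le\#\{[x]\in E:\ell_{d_\ast}[x]\le(1+\epsilon)T\}+O(1),
\]
\[
\#\{[x]\in E:\ell_{d_\ast}[x]\le T\}\le\#\{[x]\in E:\ell_d[x]\le(1-\epsilon)^{-1}T\}+O(1),
\]
with the $O(1)$ terms independent of $T$. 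Applying $\tfrac1T\log(\cdot)$ and $\limsup_{T\to\infty}$ gives $v_d(E)\le(1+\epsilon)v_{d_\ast}(E)$ and $v_{d_\ast}(E)\le(1-\epsilon)^{-1}v_d(E)$; combined with hypotheses~(1) and~(2) this yields $v_d\le(1+\epsilon)v_{d_\ast}$ and $v_{d_\ast}\le(1-\epsilon)^{-1}v_d$, and letting $\epsilon\downarrow0$ we obtain $v:=v_d=v_{d_\ast}$. (This equality can also be read off from the large-deviation estimate in \cite[Thm.~4.23]{cantrell.tanaka.1}.)

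Once $v_d=v_{d_\ast}=v>0$ is known, hypothesis~(3) says that $|v_d\ell_d[x]-v_{d_\ast}\ell_{d_\ast}[x]|=v\,|\ell_d[x]-\ell_{d_\ast}[x]|\le v f(\ell_d[x])$ for every $[x]\in E$, where $t\mapsto vf(t)$ is again $o(t)$. Suppose toward a contradiction that there is $x\in\G$ with $v_d\ell_d[x]\ne v_{d_\ast}\ell_{d_\ast}[x]$. Then condition~(1) of Theorem~\ref{thm.main} holds, hence $\beta(d,d_\ast)>0$, and the bound \eqref{eq.ineqbeta} applied to our set $E$ (with the function there taken to be $vf$) gives $v_d(E)\le v_d-\beta(d,d_\ast)<v_d$, contradicting the hypothesis $v_d(E)=v_d$. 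Therefore $v_d\ell_d[x]=v_{d_\ast}\ell_{d_\ast}[x]$ for all $x\in\G$, so $\ell_d[x]=\ell_{d_\ast}[x]$ for all $[x]\in\conj$, and $d$ and $d_\ast$ are roughly isometric.

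The only substantive point is the equality $v_d=v_{d_\ast}$ together with the input from Theorem~\ref{thm.main}; the rest is bookkeeping. If one prefers not to invoke Theorem~\ref{thm.main} as a black box, the route enabled by the large-deviation variation of \cite[Thm.~4.23]{cantrell.tanaka.1} would be to estimate $\#\{[x]\in\conj:\ell_d[x]\le T,\ |\ell_d[x]-\ell_{d_\ast}[x]|\le f(\ell_d[x])\}$ directly through the Manhattan curve of the pair $(d,d_\ast)$, showing that this count grows strictly slower than $e^{v_dT}$ unless that curve is the straight segment from $(v_d,0)$ to $(0,v_{d_\ast})$, equivalently unless $v_d\ell_d\equiv v_{d_\ast}\ell_{d_\ast}$. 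The delicate ingredient in that approach is the strict convexity/rigidity statement identifying exactly when the Manhattan curve degenerates to a line, and calibrating the sub-linear window $f(t)=o(t)$ against the tangent direction of the curve at the relevant endpoint.
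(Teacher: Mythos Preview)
Your argument is correct, and it differs from the paper's proof in an instructive way. The paper does not first show $v_d=v_{d_\ast}$; instead it appeals to a large-deviation lemma (Lemma~\ref{lem.ineq}, a variation of \cite[Thm.~4.23]{cantrell.tanaka.1}) to conclude that the intersection numbers satisfy $\tau(d_\ast/d)=1$ and $\tau(d/d_\ast)=1$, and then uses $\theta_{d_\ast/d}'(0)=-\tau(d_\ast/d)=-1=-\tau(d/d_\ast)^{-1}=\theta_{d_\ast/d}'(v_d)$ together with convexity to force the Manhattan curve to be a straight line on $[0,v_d]$, hence $d,d_\ast$ are roughly isometric by Theorem~\ref{thm.manprops}. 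Your route is more elementary: the equality $v_d=v_{d_\ast}$ follows from a direct counting argument using only hypotheses (1)--(3), after which Theorem~\ref{thm.main} applies as a black box and you never touch $\tau$ or the derivative of $\theta_{d_\ast/d}$. The paper's approach has the advantage that Lemma~\ref{lem.ineq} is formulated to also yield Theorem~\ref{thm.domination}, so it unifies the proofs of Theorems~\ref{thm.main2} and~\ref{thm.domination}; your approach, by contrast, gives a shorter and more self-contained proof of Theorem~\ref{thm.main2} once Theorem~\ref{thm.main} is in hand, but does not obviously extend to the one-sided domination statement.
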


The same large deviation result allows us to deduce \emph{marked length spectrum domination} on subsets, in the sense of the next theorem.

\begin{theorem}\label{thm.domination}
Let $\G$ be a non-elementary hyperbolic group and consider $d,d_\ast\in \calD_\G$. Suppose that $E \subset \conj$ satisfies:
\begin{enumerate}
\item $v_{\widehat{d}}(E) = v_{\widehat{d}}$ \, for \emph{every} $\widehat{d}\in \calD_\G$; and,
\item there exists a function $f : \mathbb{R} \to \mathbb{R}$ such that $f(t) = o(t)$ as $t\to \infty$ and $\ell_{d_\ast}[x] \leq \ell_{d}[x] +f(\ell_{d}[x])$ for all $[x] \in E$.
\end{enumerate}
Then $\ell_{d_\ast}[x] \leq  \ell_{d}[x]$ for all $[x] \in \conj$.
\end{theorem}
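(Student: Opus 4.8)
The plan is to argue directly that $\Dil(d_\ast,d)\le 1$. Since torsion conjugacy classes have vanishing translation length for both pseudo metrics, this immediately gives $\ell_{d_\ast}[x]\le \Dil(d_\ast,d)\,\ell_d[x]\le \ell_d[x]$ for every $[x]\in\conj$, which is the assertion. The two ingredients are the large deviation estimate used for Theorem~\ref{thm.main2} (the variation of \cite[Thm.~4.23]{cantrell.tanaka.1}) and the structure of the Manhattan curves, as developed in \cite{reyes} and \cite{cantrell.tanaka.1}.

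\emph{Step 1: cleaning up the hypothesis and applying large deviations.} Fix $\e>0$. Since $f(t)=o(t)$, there is $T_\e$ with $f(t)/t<\e$ for $t\ge T_\e$, and only finitely many conjugacy classes have $d$-translation length below $T_\e$; hence, up to a finite set, $E\subseteq E_\e:=\{[x]\in\conj:\ell_{d_\ast}[x]<(1+\e)\ell_d[x]\}$. A finite set has zero exponential growth rate for every pseudo metric, so hypothesis (1) upgrades to $v_{\hat d}(E_\e)=v_{\hat d}$ for \emph{every} $\hat d\in\calD_\G$ and every $\e>0$. I would then apply the large deviation estimate to the pair $(d,d_\ast)$ with reference pseudo metric $\hat d$: it yields a convex rate function for the distortion $[x]\mapsto \ell_{d_\ast}[x]/\ell_d[x]$ under $\hat d$-counting of conjugacy classes, vanishing only at the $\hat d$-typical distortion $\alpha_{\hat d}:=\mathbb{E}_{\mu_{\hat d}}[\ell_{d_\ast}]/\mathbb{E}_{\mu_{\hat d}}[\ell_d]$ (a drift ratio for the Patterson--Sullivan measure $\mu_{\hat d}$), so that
\[
v_{\hat d}\big(\{[x]\in\conj : \ell_{d_\ast}[x]/\ell_d[x]\le a\}\big)<v_{\hat d}\qquad\text{whenever } a<\alpha_{\hat d}.
\]
Comparing with $v_{\hat d}(E_\e)=v_{\hat d}$ forces $\alpha_{\hat d}\le 1+\e$, and letting $\e\to 0$ we conclude $\alpha_{\hat d}(d,d_\ast)\le 1$ for \emph{every} $\hat d\in\calD_\G$. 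This is exactly the point at which we use that $E$ has full growth with respect to all of $\calD_\G$, not merely with respect to $d$ and $d_\ast$.

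\emph{Step 2: from all drift ratios $\le 1$ to $\Dil(d_\ast,d)\le 1$.} Since $\alpha_{\hat d}$ is a ratio of $\mu_{\hat d}$-averages of $\ell_{d_\ast}$ and $\ell_d$, automatically $\alpha_{\hat d}\le\Dil(d_\ast,d)$; the real content is the reverse bound $\sup_{\hat d\in\calD_\G}\alpha_{\hat d}(d,d_\ast)=\Dil(d_\ast,d)$. In contrapositive form: assuming $\Dil(d_\ast,d)>1$, I must produce \emph{some} $\hat d\in\calD_\G$ with $\alpha_{\hat d}(d,d_\ast)>1$, contradicting Step~1. To do this I would fix $[x_0]\in\conj'$ with $\ell_{d_\ast}[x_0]/\ell_d[x_0]>1$ and build pseudo metrics $\hat d_n\in\calD_\G$---word metrics, or quasi-isometric perturbations of them, attached to generating sets or $\G$-actions adapted to a quasi-convex subgroup containing a power of $x_0$---whose Patterson--Sullivan measures $\mu_{\hat d_n}$ place asymptotically all their mass on geodesics shadowing the axis of $x_0$, so that $\alpha_{\hat d_n}(d,d_\ast)\to \ell_{d_\ast}[x_0]/\ell_d[x_0]>1$; concretely, one exhibits the conjugacy-class current of $[x_0]$ in the weak-$\ast$ closure of $\{\mu_{\hat d}:\hat d\in\calD_\G\}$ and invokes continuity of the relevant drift/intersection functionals from the theory of the symmetric Thurston metric on $\scrD_\G$. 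Carrying Step~1 out with explicit rates also produces an analogue of \eqref{eq.ineqbeta}, quantifying how thin each $E_\e$ must be.

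The hard part will be precisely this realization step in Step~2: the measures $\mu_{\hat d}$ are non-atomic and fully supported on $\partial\G$, so none of them is the atomic current of $[x_0]$, and one must instead show that, as the reference metric degenerates towards the $x_0$-direction, the quantities $v_{\hat d_n}$, $\mathbb{E}_{\mu_{\hat d_n}}[\ell_d]$ and $\mathbb{E}_{\mu_{\hat d_n}}[\ell_{d_\ast}]$ behave well enough to push $\alpha_{\hat d_n}$ strictly past $1$. This should be comfortably within reach, since we only need a strict inequality $\alpha_{\hat d}>1$ for some $\hat d$ and not the sharp value $\Dil(d_\ast,d)$; once it is in place, the remaining steps are a routine application of the large deviation estimate, with Step~1 closing the argument.
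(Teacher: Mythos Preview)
Your reduction to $\Dil(d_\ast,d)\le 1$ and the idea behind Step~1 are sound, but Step~2 as written is a genuine gap. You propose to find $\hat d\in\calD_\G$ with $\alpha_{\hat d}>1$ by constructing pseudo metrics whose Patterson--Sullivan measures concentrate on the axis of some $[x_0]$ with $\ell_{d_\ast}[x_0]/\ell_d[x_0]>1$. But every pseudo metric in $\calD_\G$ is proper and quasi-isometric to a word metric, and its PS measure is fully supported on $\partial\G$; you cannot push mass onto a single axis without leaving $\calD_\G$ (coning off the cyclic subgroup $\langle x_0\rangle$ lands you in $\partial\calD_\G$, not $\calD_\G$). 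Neither the approximation of rational currents by PS measures of metrics in $\calD_\G$ nor the continuity of the drift ratio along such a degeneration is proven in the paper or its references, and it is unclear that either holds. Note also that your Step~1 already invokes a \emph{three}-metric large deviation principle (reference $\hat d$, ratio $\ell_{d_\ast}/\ell_d$) that is not in \cite{cantrell.tanaka.1}.

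The paper replaces your speculative concentration argument with an explicit one-parameter family. For each $a<0$ take $d_a\in\calD_\G$ roughly isometric to $ad_\ast+\theta_{d_\ast/d}(a)d$ (this exists by \cite[Prop.~4.1]{cantrell.reyes}). The hypothesis on $E$ rewrites as $\ell_{d_\ast}[x]\le (a+\theta_{d_\ast/d}(a))^{-1}\ell_{d_a}[x]+f_a(\ell_{d_a}[x])$ on $E$, so the \emph{two}-metric Lemma~\ref{lem.ineq} with reference $d_a$ yields $\tau(d_\ast/d_a)\le (a+\theta_{d_\ast/d}(a))^{-1}$; a short computation with $\theta_{d_\ast/d_a}'(0)$ converts this into $\theta'_{d_\ast/d}(a)\ge -1$. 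Since this holds for every $a<0$, the asymptotic $\Dil(d_\ast,d)=-\lim_{a\to-\infty}\theta'_{d_\ast/d}(a)$ from \cite[Cor.~3.3]{cantrell.tanaka.1} finishes the proof. In effect, the Manhattan-curve family $\{d_a\}_{a<0}$ \emph{is} the degenerating sequence you were looking for, with the limit $a\to-\infty$ playing the role of ``concentrating on the direction where $\ell_{d_\ast}/\ell_d$ is largest''; the Manhattan calculus makes this precise and avoids any appeal to weak-$\ast$ closures. This is also why, as the remark after Theorem~\ref{thm.domination} notes, hypothesis~(1) is only needed for $\hat d$ on the Manhattan geodesic rather than all of $\calD_\G$.
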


\begin{remark*}
\noindent $1)$ If a set $E \subset \conj$ has complement that grows sub-exponentially as in (\ref{eq.seg}) above, then it satisfies $(1)$ and $(2)$ in Theorem \ref{thm.main2} and $(1)$ in Theorem \ref{thm.domination}. More generally, condition $(1)$ in Theorem \ref{thm.domination} holds if the complement $E^c$ of $E$ in $\conj$ satisfies $v_{\widehat{d}}(E^c)<v_{\widehat{d}}$ for every $\widehat{d}\in \calD_\G$. Complemented with the monotonicity of the area under domination of marked length spectra for negatively curved metrics by Croke and Dairbekov \cite{croke.dairbekov}, this addresses \cite[Conj.~5.4]{sawyer}.

\noindent $2)$ Besides (1) and (2), for Theorem \ref{thm.main2} we do not require any further assumptions on $v_d$ and $v_{d_\ast}$, so a priori we do not impose $v_d=v_{d_\ast}$.

\noindent $3)$ We can replace (1) in Theorem \ref{thm.domination} with the assumption that $v_{\widehat{d}}(E)=v_{\widehat{d}}$ for each $\widehat{d}$ representing a point on the \emph{Manhattan geodesic} for $[d]$ and $[d_\ast]$ (see \cite{cantrell.reyes}). We note that it is not possible to weaken the assumptions and only assume that $E$ satisfies $(2)$ and $v_d(E)=v_d$, $v_{d_\ast}(E)=v_{d_\ast}$ (similarly to Theorem \ref{thm.main2}). This fact follows from the large deviation principle we use in the proof of Theorem \ref{thm.domination}.
\end{remark*}
We now discuss the consequences of these results to the problems mentioned earlier in the introduction. 

\subsection{Subset rigidity}
Using the results mentioned above we can deduce marked length spectrum rigidity from rigidity on subsets of conjugacy classes with properties that do not depend on $d,d_\ast$, but on other geometric or group theoretic assumptions. For example, by applying a result of Coulon, Dougall, Shapira, and Tapie \cite{CDST} we obtain the following corollary. Recall that a (not necessarily normal) subgroup $H < \G$ is called \emph{co-amenable} if the action of $\G$ on the left coset space $H \backslash \G$ admits a $\G$-invariant mean.
\begin{corollary}\label{cor.1}
    Suppose that $\G$ is a non-elementary hyperbolic group and take two pseudo metrics $d,d_\ast \in \Dc_\G$. If there exists a function $f: \mathbb{R} \to \mathbb{R}_{\geq 0}$ such that $f(t) = o(t)$ as $t\to \infty$ and $$d_\ast(o,x) \leq  d(o,x) + f(d(o,x))$$ for all $x$ belonging to a co-amenable subgroup (or a fixed homology class) $H \subset \G$, then $\ell_{d_\ast}[x]\leq \ell_{d}[x]$ for all $[x]\in \conj$. In particular, if $|d(o,x)- d_\ast(o,x)| \leq f(d(o,x))$
    for all $x\in H$, then $d$ and $d_\ast$ are roughly isometric.
\end{corollary}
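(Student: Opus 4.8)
The plan is to derive Corollary \ref{cor.1} from Theorem \ref{thm.domination}, applied to the set
\[
E := \{[x] \in \conj : [x] \text{ contains an element of } H\}
\]
of conjugacy classes that meet $H$. So I would check that $E$ satisfies hypotheses (1) and (2) of that theorem; the first (``in particular'') sentence then follows, and for the final sentence I would run the same argument a second time with the roles of $d$ and $d_\ast$ interchanged, and close with marked length spectrum rigidity for $\calD_\G$.

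Checking hypothesis (2) is direct. Given $[x]\in E$, choose $h\in H$ with $[h]=[x]$. If $h$ has finite order, then $n\mapsto d(o,h^n)$ and $n\mapsto d_\ast(o,h^n)$ are periodic, hence bounded, so $\ell_d[x]=\ell_{d_\ast}[x]=0$. If $h$ has infinite order then, $\G$ being hyperbolic, $h$ is loxodromic, so $d(o,h^n)\asymp n\to\infty$ and $\ell_d[x]>0$; since $h^n\in H$ for all $n$, the hypothesis gives $d_\ast(o,h^n)\le d(o,h^n)+f(d(o,h^n))$, and dividing by $n$ and letting $n\to\infty$ the error $f(d(o,h^n))/n=\frac{f(d(o,h^n))}{d(o,h^n)}\cdot\frac{d(o,h^n)}{n}$ tends to $0$ because $f(t)=o(t)$, so $\ell_{d_\ast}[x]\le\ell_d[x]$. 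Thus (2) holds (indeed with the zero function in place of $f$).

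The heart of the argument is hypothesis (1): $v_{\hat d}(E)=v_{\hat d}$ for every $\hat d\in\calD_\G$. Since $E\subseteq\conj$ the inequality $v_{\hat d}(E)\le v_{\hat d}$ is automatic, so the content is the reverse one. I would argue that $v_{\hat d}(E)$ is bounded below by the exponential growth rate of $H$ in the metric $\hat d$, namely $\limsup_{T\to\infty}\tfrac1T\log\#\{h\in H:\hat d(o,h)\le T\}$ --- this uses that, for hyperbolic groups, conjugacy-class growth and element growth have the same exponential rate (the mechanism behind Proposition \ref{prop.samegrowthrate}), now applied to the subset $H$. Then, since $\hat d$ is $\G$-invariant, Gromov hyperbolic and quasi-isometric to a word metric, while co-amenability is a property of the pair $(\G,H)$ alone, the theorem of Coulon--Dougall--Shapira--Tapie \cite{CDST} applies and gives that a co-amenable subgroup has the same critical exponent (equivalently, exponential growth rate) as the ambient group, so the growth rate above equals $v_{\hat d}$. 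Hence $v_{\hat d}(E)=v_{\hat d}$, and Theorem \ref{thm.domination} yields $\ell_{d_\ast}[x]\le\ell_d[x]$ for all $[x]\in\conj$. For the last sentence, from $|d(o,x)-d_\ast(o,x)|\le f(d(o,x))$ on $H$ I get $\ell_{d_\ast}\le\ell_d$ on $\conj$ from the case just proved, and I also get $\ell_d\le\ell_{d_\ast}$ by applying it again with $d$ and $d_\ast$ swapped --- since $d$ and $d_\ast$ are quasi-isometric, the hypothesis can be rewritten as $d(o,x)\le d_\ast(o,x)+g(d_\ast(o,x))$ on $H$ for some $g(t)=o(t)$; so $\ell_d=\ell_{d_\ast}$ on all of $\conj$, and marked length spectrum rigidity for $\calD_\G$ (\cite[Thm.~1.2]{cantrell.tanaka.1}) gives that $d$ and $d_\ast$ are roughly isometric.

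The step I expect to be the main obstacle is hypothesis (1): turning the element-growth statement that \cite{CDST} provides into the conjugacy-class statement $v_{\hat d}(E)=v_{\hat d}$ --- i.e.\ making rigorous that the equality ``conjugacy growth $=$ element growth'' localizes to the subgroup $H$ (morally: most elements of $H$ of bounded $\hat d$-length are ``cyclically reduced'', so have $\G$-conjugacy classes of only polynomial size) --- and doing so uniformly over the non-word pseudometrics $\hat d\in\calD_\G$, where one relies on the quasi-isometry of $\hat d$ to a word metric together with the purely group-theoretic nature of co-amenability to invoke \cite{CDST}.
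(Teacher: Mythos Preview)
Your proposal is correct and follows essentially the same route as the paper: apply Theorem \ref{thm.domination} (the paper also invokes Theorem \ref{thm.main2} for the ``in particular'', where you instead apply Theorem \ref{thm.domination} twice and finish with marked length spectrum rigidity --- both work) to the same set $E$, verifying hypothesis (2) by passing to powers and hypothesis (1) via Proposition \ref{prop.samegrowthrate} together with \cite{CDST}. The obstacle you flag is precisely what the paper addresses: for the ``localization'' concern, one applies Proposition \ref{prop.samegrowthrate} not to $H$ itself but to its conjugacy closure $A$ (which is conjugation-invariant and contains $H$), giving $v_{\hat d}(E)=v_{\hat d}(A)\ge v_{\hat d}(H)$; and for extending \cite{CDST} beyond word metrics, the paper argues that word-metric structures are dense in $(\scrD_\G,\Delta)$ (Lemma \ref{lem:HDLFapproxword}) and that $[\hat d]\mapsto v_{\hat d}(H)/v_{\hat d}$ is well-defined and continuous there, so $v_{\hat d}(H)=v_{\hat d}$ propagates from word metrics to all of $\calD_\G$.
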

When $f$ is a constant function, this result is implied in some cases by the work of Hao \cite{hao} in settings where the considered metrics have continuous Busemann cocycles. The corollary above also implies that two negatively curved Riemannian metrics $\mathfrak{g},\mathfrak{g}_\ast$ on a closed surface are isometric if their marked length spectra agree on a fixed homology class. This was recently proved by Gogolev and Rodriguez Hertz \cite{gogolev.hertz} using Livshits theory for abelian cohomology over transitive Anosov flows. Our methods provide a refinement of this result that allows the difference between $\ell_\mathfrak{g}$ and $\ell_{\mathfrak{g}_\ast}$ to be unbounded. In addition, our result applies to arbitrary hyperbolic groups $\G$ and pseudo metrics in $\calD_\G$.





\subsection{Volume rigidity}


Corollary \ref{cor.1} can be used to improve volume estimates induced from marked length spectrum domination. Similar to Theorem \ref{thm.volgl}, we are able to refine a result of Croke and Dairbekov \cite[Thm.~1.1]{croke.dairbekov}.
\begin{corollary}\label{cor.area}
    Suppose that $S$ is a closed surface of genus at least 2, and let $\mathfrak{g}, \mathfrak{g}_\ast$ be two negatively curved Riemannian metrics on $S$. If $\ell_{\mathfrak{g}_\ast}[x] \le \ell_{\mathfrak{g}}[x]$ for each conjugacy class $[x]$ represented by an element in a fixed co-amenable subgroup (resp.~homology class) of $\pi_1(S)$, then 
    \[
    \textnormal{Area}(S,\mathfrak{g}_\ast) \le \textnormal{Area}(S,\mathfrak{g}).
    \]
\end{corollary}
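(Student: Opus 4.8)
The plan is to combine the marked length spectrum domination statement of Corollary~\ref{cor.1} (or Corollary~\ref{cor.2}) with the area monotonicity result of Croke and Dairbekov \cite{croke.dairbekov}. The key observation is that the hypothesis ``$\ell_{\mathfrak{g}_\ast}[x] \le \ell_{\mathfrak{g}}[x]$ for every $[x]$ represented by an element of a co-amenable subgroup (resp.\ homology class)'' is precisely the special case $f \equiv 0$ of the hypothesis of Corollary~\ref{cor.1} (resp.\ Corollary~\ref{cor.2}), once we encode the two metrics as pseudo metrics in $\calD_\G$ with $\G = \pi_1(S)$.

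First I would fix a point $p$ in the universal cover $\widetilde{S}$ and, as in the discussion after the definition of $\ell_d$ in the introduction, set $d = d_{\mathfrak{g}}^{p}$ and $d_\ast = d_{\mathfrak{g}_\ast}^{p}$; these belong to $\calD_\G$ and satisfy $\ell_d = \ell_{\mathfrak{g}}$ and $\ell_{d_\ast} = \ell_{\mathfrak{g}_\ast}$. The hypothesis on the co-amenable subgroup $H < \pi_1(S)$ (resp.\ the homology class) then reads exactly as condition (with $f\equiv 0$) in Corollary~\ref{cor.1} (resp.\ Corollary~\ref{cor.2}): $\ell_{d_\ast}[x] \le \ell_d[x] + f(\ell_d[x])$ for all $x$ in $H$ (resp.\ in the homology class), with $f \equiv 0 = o(t)$. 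Applying Corollary~\ref{cor.1} (resp.\ Corollary~\ref{cor.2}) yields $\ell_{d_\ast}[x] \le \ell_d[x]$, i.e.\ $\ell_{\mathfrak{g}_\ast}[x] \le \ell_{\mathfrak{g}}[x]$, for \emph{all} $[x] \in \conj$; in other words the marked length spectrum of $\mathfrak{g}$ dominates that of $\mathfrak{g}_\ast$ everywhere.

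Once global domination of the marked length spectra is established, the conclusion is immediate from \cite[Thm.~1.1]{croke.dairbekov}, which states that for negatively curved Riemannian metrics on a closed surface, $\ell_{\mathfrak{g}_\ast} \le \ell_{\mathfrak{g}}$ on all of $\conj$ implies $\mathrm{Area}(S,\mathfrak{g}_\ast) \le \mathrm{Area}(S,\mathfrak{g})$. (Strictly speaking Croke--Dairbekov prove the two-sided statement and a rigidity statement; the one-sided area inequality is exactly what their monotonicity argument gives, so no extra work is needed.) Thus the proof is just a two-step reduction: \emph{(i)} upgrade the partial hypothesis to full marked length spectrum domination via our Corollary~\ref{cor.1}/\ref{cor.2}, then \emph{(ii)} invoke Croke--Dairbekov.

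The only point requiring any care --- and the place I would expect a referee to pause --- is the passage between ``conjugacy classes represented by elements of $H$'' in the statement of Corollary~\ref{cor.area} and ``all $x \in H$'' in the statement of Corollary~\ref{cor.1}. These are literally the same condition: the set $\{[x] \in \conj : x \in H\}$ is the set $E$ appearing in the proof of Corollary~\ref{cor.1}, and the hypothesis of Corollary~\ref{cor.1} is phrased in terms of group elements $x \in H$ but is used only through $\ell_{d_\ast}[x] \le \ell_d[x]$ on $E$ (this is exactly how the proof of Corollary~\ref{cor.1} proceeds, via Proposition~\ref{prop.samegrowthrate}). So there is no genuine obstacle here, merely a bookkeeping remark. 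The homology-class version is identical, using Corollary~\ref{cor.2} in place of Corollary~\ref{cor.1}.
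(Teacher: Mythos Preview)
Your proposal is correct and follows essentially the same two-step reduction as the paper's proof: upgrade the partial domination to full domination via Corollary~\ref{cor.1} (resp.~Corollary~\ref{cor.2}), then invoke \cite[Thm.~1.1]{croke.dairbekov}. Your additional care about the displacement-versus-translation-length phrasing of Corollary~\ref{cor.1} is a valid bookkeeping observation that the paper glosses over, but it is not a genuine difference in approach.
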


We can also promote volume rigidity for Hitchin representations, where the involved notation is introduced in Subsection \ref{subsec.area}.

\begin{corollary}\label{cor.areahitchin}
    Let $\G$ be the fundamental group of a closed orientable surface of genus at least 2, and let $\rho,\rho_\ast:\R \ra \PSL_m(\R)$ be two Hitchin representations. If these representations satisfy $\lam_1(\rho_\ast(x))\leq \lam_1(\rho(x))$ for all $x$ in a co-amenable subgroup (resp.~homology class), then
    $$\textnormal{vol}_L(\rho_\ast)\leq \textnormal{vol}_L(\rho).$$
\end{corollary}


\subsection{Uniform estimates on growth rates}

We can also use Theorem \ref{thm.continuous} to formulate a local version of Theorem \ref{thm.main} with uniformity over the growth rate constant $\beta(d,d_\ast)$. Given $\del\geq 0$, let $\Dc^\delta_\G \subset \Dc_\G$ denote the collection of pseudo metrics in $\Dc_\G$ that are $\del$-hyperbolic and have exponential growth rate 1.
\begin{corollary}\label{cor.main}
Suppose $\G$ is a non-elementary hyperbolic group and fix $d \in \Dc^\delta_\G$. Then for any  $0<D_1 < D_2$ there exists $\tau = \tau([d],D_1,D_2, \delta) > 0$ such that if $E \subset \conj$ and $d_\ast \in \Dc^\delta_\G$ satisfy
    \begin{enumerate}
        \item $D_1 \le \Delta([d],[d_\ast]) \le D_2$; and,
      \item     $\sup_{[x] \in E}|\ell_{d}[x] - \ell_{d_\ast}[x]|$ is finite,
          \end{enumerate}
   then we have that
           \[
v_d(E) \le  1 - \tau.
    \]
\end{corollary}    
As a corollary we deduce the following result.
Let $\G$ be a hyperbolic surface group and write $\mathcal{QF}_\G$ for the set of quasi-Fuchsian representations of $\G$ into $\PSL_2(\C)$. For each representation $\rho \in \mathcal{QF}_\G$ and $p\in \Hy^3$ we get a metric $d_\rho=d_\rho^p \in \Dc_\G$ which we assume is normalised to have exponential growth rate $1$: this metric is obtained by lifting the metric on $\Hy^3$ to $\G$ via the action of $\rho(\G)$ on the $\rho(\G)$-orbit of $p$. The normalisation constant is the Hausdorff dimension of the limit set, which belongs to $[1,2]$ \cite{bowen}. In this way, by fixing $p\in \Hy^3$ we identify $\mathcal{QF}$ with a subset of $\Dc_\G^\d$ for some $\d=2\log 2$ and Corollary \ref{cor.main} implies the following.

\begin{corollary}
Suppose $\G$ and $\mathcal{QF}_\G$ are as above and fix $p\in \Hy^3$. Then for any $d=d_\rho \in \mathcal{QF}_\G\subset \calD_\G$ and $D_1,D_2 >0$ there exists $\tau > 0$ such that for all $d_\ast \in \mathcal{QF}_\G$ with $D_1 \le \D([d],[d_\ast]) \le D_2$ we have that
\[
v_d(E) \le 1 - \tau
\]
for any set $E$ on which $\sup_{[x]\in E} |\ell_d[x] - \ell_{d_\ast}[x]|$ is bounded.
\end{corollary}
This immediately follows from Corollary \ref{cor.main}.

\subsection*{Organisation}
The organisation of the paper is as follows. In Section \ref{sec.prelim} we cover preliminary material about hyperbolic spaces and and Manhattan curves. In Section \ref{sec.lsos} we prove Theorems \ref{thm.main} and \ref{thm.continuous}. We discuss the optimality of Theorem \ref{thm.main} in Section \ref{sec.auto}, where we show that certain pairs of word metrics attain the maximal growth rate. In Section \ref{sec.mlsdom} we prove Theorems \ref{thm.main2} and \ref{thm.domination}, and in Section \ref{sec.applications} we include some applications of our main theorems about marked length spectrum rigidity, deducing Theorems \ref{thm.volgl}, \ref{thm.gapword} and \ref{thm.growthtrees}, and Corollaries \ref{cor.area}, \ref{cor.areahitchin} and \ref{cor.main}.

\subsection*{Acknowledgements}
We are grateful to Karen Butt for helpful comments and suggestions.


\section{Preliminaries}\label{sec.prelim}
\subsection{Hyperbolic groups, metrics and distance-like functions} \label{sec.hyp}
Let $\G$ be a group.
Recall that a pseudo metric $d$ on $\G$ is called $\del$-\emph{hyperbolic} ($\del\geq 0$) if
\[
(x|y)^d_z \ge \min\{(x|w)^d_z, (y|w)^d_z \} - \delta \text{ for all $x,y,z,w \in \G$}
\]
where
\[
(x|y)^d_z = \frac{1}{2}(d(x,z) + d(z,y) - d(x,y)) \text{ for $x,y,z \in \G$} 
\]
is the Gromov product. The pseudo metric $d$ is \emph{hyperbolic} if it is $\del$-hyperbolic for some $\del$. Throughout this work $\G$ will be a \emph{non-elementary hyperbolic group}: a non-virtually cyclic, finitely generated group $\G$ such that any word metric $d_S$ on $\G$ associated to a finite, symmetric generating set $S$ is hyperbolic. We will write $\Dc_\G$ for the collection of pseudo metrics on $\G$ that are hyperbolic, quasi-isometric to a word metric for a finite generating set and $\G$-invariant.



We say that two pseudo metrics $d,d_\ast$ on $\G$ are \emph{roughly similar} if there exist $\tau, C>0$ such that
\[
|d(x,y) - \tau d_\ast(x,y)| \le C \ \text{ for all $x,y \in \G$}.
\]
If $d$ and $d_\ast$ are roughly similar with $\tau =1$ we say that they are \emph{roughly isometric}.

The \emph{stable translation length} of $d\in \calD_\G$ is given by
\begin{equation}\label{eq.mls}
    \ell_d[x]:=\lim_{n\to \infty}{\frac{1}{n}d(o,x^n)} \hspace{2mm} \text{for } [x]\in \conj,
\end{equation}
which is well-defined by subadditivity. We define the \emph{dilation} of $d,d_\ast \in \Dc_\G$ to be
\[
\Dil(d,d_\ast) = \sup_{[x] \in \conj'}\frac{\ell_d[x]}{ \ell_{d_\ast}[x]}.
\]
Then the \emph{space of metric structures} on $\G$ is the set $\scrD_\G$ of rough similarity equivalence classes in $\Dc_\G$, and the \emph{symmetric Thurston distance} on $\scrD_\G$ is given by
\begin{equation*}
\Delta([d],[d_\ast]) = \log\left( \Dil(d,d_\ast) \Dil(d_\ast,d) \right).
\end{equation*}



\begin{definition}
    Let $\ov\Dc_\G$ be the set of all left-invariant pseudo metrics on $\G$ that have non-constant translation length function and such that there exist $\lambda >0$ and $ d_\ast\in \Dc_\G$ satisfying $(x|y)_o^d \le \lambda(x|y)_o^{d_\ast} + \lambda$ for all $x,y \in \G$. We write $\ov\scrD_\G$ for the quotient of $\ov\Dc_\G$ under the equivalence of rough similarity.
\end{definition}
It was shown in \cite[Lem.~6.3]{cantrell.reyes} that elements in $\ov\Dc_\G$ are hyperbolic pseudo metrics. The stable translation length of a pseudo metric in $\ov\calD_\G$ is defined as in \eqref{eq.mls}. In \cite[Thm.~1.6]{cantrell.reyes} it was shown that the following actions induce points in $\ov\scrD_\G$. 
\begin{enumerate}
    \item Actions on coned-off Cayley graphs for finite, symmetric generating sets, where we cone-off a finite number of quasi-convex subgroups of infinite index.
    \item Non-trivial Bass-Serre tree actions with quasi-convex vertex stabilizers of infinite index.
    \item Non-trivial small actions on $\R$-trees, when $\G$ is a surface group or a free group.
\end{enumerate}
Pseudo metrics corresponding to these actions need not be proper, i.e. given $d \in \ov\Dc_\G$ is possible that there is $T >0$ such that the set $\{x \in \G : d(o,x)  \leq T\}$ is infinite. 

\subsection{Manhattan curves}\label{sec.manhattan}
One of the key tools we utilise are Manhattan curves.
Given a pair of pseudo metrics $d,d_\ast \in \Dc_\G$, its \emph{Manhattan curve} is defined as the boundary of the convex set
\begin{equation} \label{eq.ps}
\mathcal{C}^M_{d_\ast/d} = \left\{ (a,b) \in \mathbb{R}^2 : \sum_{x\in\G} e^{-ad_\ast(o,x) - bd(o,x)} < \infty \right\}.
\end{equation}
We parameterise this curve by a function $\theta_{d_\ast/d} : \R\to\R$: for fixed $a\in\mathbb{R}$, $\theta_{d_\ast/d}(a)$ is the abscissa of convergence of the sum in (\ref{eq.ps}) as $b$ varies. In \cite{cantrell.tanaka.1} the following properties of $\theta_{d_\ast/d}$ were established.
\begin{theorem}\label{thm.manprops}
For $d,d_\ast \in \Dc_\G$ we have the following:
    \begin{enumerate}
    \item $\theta_{d_\ast/d}$ is convex and $C^1$ (i.e. is continuously differentiable);
    \item $\theta_{d_\ast/d}$ goes through the points $(0,v_{d})$ and $(v_{d_\ast},0)$ and is a straight line between these points if and only if $d,d_\ast$ are roughly similar; 
    \item we have that 
    \[
    -\theta'_{d_\ast/d}(v_{d_\ast}) \le  \frac{v_{d}}{v_{d_\ast}} \le -\theta'_{d_\ast/d}(0)
    \] 
    and both equalities occur if and only if $d$ and $d_\ast$ are roughly similar; and,
    \item $d,d_\ast$ are roughly similar if and only if they have proportional marked length spectra.
\end{enumerate}
\end{theorem}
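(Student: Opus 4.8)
The plan is to present $\theta_{d_\ast/d}$ as a level curve of a topological pressure function built from a symbolic coding of $\G$, and then read off the four properties from thermodynamic formalism together with two short convexity arguments. Write $\mathcal{P}(a,b)=\sum_{x\in\G}e^{-a d_\ast(o,x)-b d(o,x)}$ for the Poincar\'e series defining $\mathcal{C}^M_{d_\ast/d}$. That $\theta_{d_\ast/d}$ is the graph of a convex function is formal: H\"older's inequality gives
\[
\mathcal{P}\big(ta_1+(1-t)a_2,\ tb_1+(1-t)b_2\big)\le \mathcal{P}(a_1,b_1)^{t}\,\mathcal{P}(a_2,b_2)^{1-t},
\]
so $\mathcal{C}^M_{d_\ast/d}$ is convex, and since $d\ge 0$ the slice $\{b:\mathcal{P}(a,b)<\infty\}$ is an upward ray with left endpoint $\theta_{d_\ast/d}(a)$; convexity of the region forces convexity of $\theta_{d_\ast/d}$. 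Taking $a=0$, the abscissa of convergence of $\sum_{x}e^{-b d(o,x)}$ is the critical exponent of $d$, which equals $v_d$ since every $d\in\Dc_\G$ is proper and has pure exponential growth (Proposition \ref{prop.samegrowthrate}); hence $\theta_{d_\ast/d}(0)=v_d$, and by symmetry the curve meets the horizontal axis at $(v_{d_\ast},0)$.

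For the regularity in (1) I would use the coding machinery of \cite{cantrell.tanaka.1}. Fix a geodesic automatic (Cannon) structure for $\G$: a subshift of finite type $(\Sigma,\sigma)$ and a surjection onto $\G$ with uniformly bounded fibres sending cylinders to geodesic words. Because $d,d_\ast\in\Dc_\G$ are $\G$-invariant, quasi-isometric to a word metric, and hyperbolic, the increments of $x\mapsto d(o,x)$ along coded geodesics are encoded by a H\"older-continuous potential $\Delta_d$ on $\Sigma$ — this is exactly where the Gromov-product stability built into the definition of $\Dc_\G$ enters — and $\mathcal{P}(a,b)<\infty$ if and only if the topological pressure satisfies $P(-a\Delta_{d_\ast}-b\Delta_d)<0$. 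Thus $\theta_{d_\ast/d}$ is the zero set $\{P(-a\Delta_{d_\ast}-b\Delta_d)=0\}$. By the Ruelle--Perron--Frobenius theorem this pressure is real-analytic in $(a,b)$, and its $b$-partial derivative is $-\int\Delta_d\,d\mu$ against the associated equilibrium state, which is strictly negative because $\Delta_d$ is cohomologous to a strictly positive function; the implicit function theorem then yields that $\theta_{d_\ast/d}$ is $C^1$ (indeed analytic), which is (1).

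Part (3) then follows from pure convexity. On $[0,v_{d_\ast}]$ the $C^1$ convex function $\theta:=\theta_{d_\ast/d}$ decreases from $\theta(0)=v_d$ to $\theta(v_{d_\ast})=0$, so its continuous non-decreasing derivative has mean value equal to the chord slope $-v_d/v_{d_\ast}$; hence $\theta'(0)\le -v_d/v_{d_\ast}\le\theta'(v_{d_\ast})$, that is $-\theta'(v_{d_\ast})\le v_d/v_{d_\ast}\le -\theta'(0)$. An equality forces the monotone function $\theta'$ to equal its mean identically, so $\theta$ is affine on $[0,v_{d_\ast}]$ and, matching the endpoint values, is the straight segment from $(0,v_d)$ to $(v_{d_\ast},0)$ — so (3) reduces to the straight-line case of (2). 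The easy implications in (2) and (4) are direct: if $|d-\tau d_\ast|\le C$ then $\mathcal{P}(a,b)$ is comparable to $\sum_x e^{-(a+\tau b)d_\ast(o,x)}$ up to a factor $e^{O(b)}$, which converges exactly when $a+\tau b>v_{d_\ast}$, so $\theta$ is the line $a+\tau b=v_{d_\ast}$ through $(v_{d_\ast},0)$ and $(0,v_d)$; and dividing $|d(o,x^n)-\tau d_\ast(o,x^n)|\le C$ by $n$ and letting $n\to\infty$ gives $\ell_d=\tau\ell_{d_\ast}$.

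What remains — and what I expect to be \emph{the main obstacle} — are the converses in (2) and (4): that affineness of $\theta_{d_\ast/d}$ between its intercepts, or proportionality of $\ell_d$ and $\ell_{d_\ast}$, forces $d$ and $d_\ast$ to be roughly similar. In the coding, $\theta_{d_\ast/d}$ being affine is equivalent (via strict convexity of pressure, i.e.\ positivity of variance, transverse to coboundaries) to $\Delta_{d_\ast}$ and $\Delta_d$ being cohomologous up to a positive constant $\tau$; and $\ell_d[x]$ equals, up to the bounded coding error, the Birkhoff sum of $\Delta_d$ over the periodic $\sigma$-orbit labelling $[x]$, so proportional marked length spectra make $\Delta_d$ and $\tau\Delta_{d_\ast}$ agree on every periodic orbit. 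In either case Liv\v{s}ic's theorem supplies a \emph{bounded} transfer function realising the cohomology, and feeding this through the coding — using that every group element is represented by a coded geodesic, together with $\G$-invariance — upgrades it to $|d(x,y)-\tau d_\ast(x,y)|\le C$ for all $x,y\in\G$, i.e.\ rough similarity. The delicate points are thus the construction of the H\"older potential $\Delta_d$ for arbitrary $d\in\Dc_\G$ and the zero-variance/Liv\v{s}ic argument in the equality cases; for (4) one may alternatively just invoke the marked length spectrum rigidity theorem for $\Dc_\G$, \cite[Thm.~1.2]{cantrell.tanaka.1}, directly.
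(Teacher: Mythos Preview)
The paper does not prove Theorem~\ref{thm.manprops}; it is stated as a black box with the attribution ``In \cite{cantrell.tanaka.1} the following properties of $\theta_{d_\ast/d}$ were established.'' So there is no in-paper proof to compare against. Your sketch is a reasonable outline of how such a result is obtained, and the soft parts (convexity from H\"older, the intercepts from the definition of the critical exponent, the derivation of (3) from convexity plus the straight-line case of (2), and the easy directions of (2) and (4)) are all correct as written.

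The one point worth flagging is the regularity argument. You propose to realise $\theta_{d_\ast/d}$ as the zero set of a pressure function on a Cannon subshift with H\"older potentials $\Delta_d,\Delta_{d_\ast}$, and then invoke analytic dependence of pressure plus the implicit function theorem; this would in fact yield real-analyticity of $\theta_{d_\ast/d}$, not merely $C^1$. But the paper explicitly notes after Theorem~\ref{thm.manprops} that stronger regularity than $C^1$ is only known ``in certain cases, for example for pairs of word metrics'' (citing \cite{cantrell.tanaka.2}). The reason is precisely the step you correctly identify as delicate: for an \emph{arbitrary} $d\in\Dc_\G$, producing a H\"older potential on the Cannon automaton which encodes $d$-increments and simultaneously recovers the correct critical exponent (across all components of the automaton) is not available in that generality. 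The proof in \cite{cantrell.tanaka.1} instead goes via Patterson--Sullivan type constructions on the boundary and a direct differentiation of the Manhattan curve, which gives $C^1$ but not more. So your strategy is sound for pairs where such codings are known (word metrics, strongly hyperbolic metrics, Green metrics), and your identification of the Liv\v{s}ic/zero-variance step for the converses in (2) and (4) is on target; but the blanket claim ``indeed analytic'' overreaches for general $d,d_\ast\in\Dc_\G$, and the actual argument behind the cited theorem takes a different route at that step.
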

In point $(3)$ we can express 
\[
- \theta'_{d_\ast/d}(0) = \lim_{T\to\infty} \frac{1}{\#\{x \in \G: d(o,x) \leq T\}}\cdot \sum_{x: d(o,x) \leq T} \frac{d_\ast(o,x)}{T}=: \tau(d_\ast/d)
\]
and we call $\tau(d_\ast/d)$ the \emph{intersection number} for $d_\ast,d$. It follows from the definition of the Manhattan curve that $\theta_{d_\ast/d}^{-1} = \theta_{d/d_\ast}$ and so we have that $-\theta'_{d_\ast/d}(v_d) = (-\theta'_{d/d_\ast}(0))^{-1} = \tau(d/d_\ast)^{-1}.$

In certain cases, for example for pairs of word metrics, it is possible to say more about the regularity of $\theta_{d_\ast/d}$ (see \cite{cantrell.tanaka.2}). To prove our theorems we need to improve upon points $(1)$ and $(2)$ above. More precisely, we need to know that $\theta_{d_\ast/d}$ is globally strictly convex if and only if $d,d_\ast$ are not roughly similar. We prove this using ideas from \cite{cantrell.reyes}.

\begin{proposition}\label{thm.stricconvexvsstraight}
    Take pseudo metrics $d,d_\ast \in \Dc_\G$. Then the Manhattan curve $\theta_{d_\ast/d}$ is strictly convex everywhere if and only if $d,d_\ast$ are not roughly similar. In particular, $\theta_{d_\ast,d}$ is either 
    a straight line or strictly convex everywhere.
\end{proposition}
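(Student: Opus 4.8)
The plan is to prove the contrapositive direction that requires work: if $\theta := \theta_{d_\ast/d}$ fails to be strictly convex at some point, then $d$ and $d_\ast$ are roughly similar. (The converse — rough similarity forces $\theta$ to be the straight line segment through $(0,v_d)$ and $(v_{d_\ast},0)$, hence certainly not strictly convex — is already contained in Theorem~\ref{thm.manprops}(2).) Since $\theta$ is convex and $C^1$ by Theorem~\ref{thm.manprops}(1), failure of strict convexity at a point means there is a nondegenerate closed interval $[a_0,a_1]$ on which $\theta$ is affine, say $\theta(a) = -m a + c$ for $a \in [a_0,a_1]$ with slope $-m = \theta'(a)$ constant there. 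The goal is to leverage this flat piece to produce the constant $\tau$ and the bound $|d(x,y) - \tau d_\ast(x,y)| \le C$.

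First I would recall the thermodynamic/ergodic interpretation of the Manhattan curve from \cite{cantrell.tanaka.1}: points $(a,\theta(a))$ on the curve correspond to equilibrium states (Patterson–Sullivan-type measures) $\mu_a$ for the potential $-a\,d_\ast - \theta(a)\,d$ on a symbolic coding of $\G$ (via a geodesic automatic/Cannon coding compatible with both $d$ and $d_\ast$, which exists because both are hyperbolic distance-like and quasi-isometric to word metrics). Along the flat interval the derivative $\theta'(a) = -m$ is constant, and differentiating the pressure equation shows that $\int (d_\ast - m^{-1} d)\, d\mu_a = 0$ — more precisely, the variance of the Hölder cocycle $d_\ast - m\,d$ (suitably normalized) with respect to $\mu_a$ vanishes, for every $a$ in the interval. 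In the thermodynamic formalism, vanishing variance of a Hölder potential over a topologically mixing subshift of finite type forces that potential to be cohomologous to a constant. Translating back to $\G$, a cocycle on the coding being cohomologous to a constant is exactly the statement that $d_\ast(x,y) - m\, d(x,y)$ is bounded along all of $\G$ — equivalently $d$ and $d_\ast$ are roughly similar with ratio $\tau = m$. This is essentially the argument used in \cite{cantrell.reyes}, which the authors cite, so I would follow that template.

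The main obstacle — and the step I expect to require the most care — is the passage from "the flat piece of the Manhattan curve" to "vanishing variance of the comparison cocycle", and then the clean application of Livšic-type rigidity. Two technical points need attention: (i) one must set up a single symbolic model that simultaneously encodes both $d$ and $d_\ast$ as Hölder potentials with a genuine spectral gap / topologically mixing property, which is where the hypothesis $d, d_\ast \in \Dc_\G$ (hyperbolic, $\G$-invariant, QI to a word metric) is essential; and (ii) one must verify that the second derivative of pressure — which is the asymptotic variance — genuinely controls strict convexity of $\theta$, i.e. that $\theta'' > 0$ is equivalent to positive variance, using the implicit function theorem applied to the pressure equation $P(-a\,d_\ast - \theta(a)\,d) = 0$ and the fact that $\partial_b P > 0$ strictly. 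Granting the machinery of \cite{cantrell.tanaka.1,cantrell.reyes}, the argument is then: flat interval $\Rightarrow$ $\theta'' \equiv 0$ there $\Rightarrow$ asymptotic variance of $d_\ast - m\,d$ vanishes $\Rightarrow$ $d_\ast - m\,d$ is cohomologous to a constant on the coding $\Rightarrow$ $d$ and $d_\ast$ roughly similar. The final sentence of the proposition ("either a straight line or strictly convex everywhere") then follows immediately: if $\theta$ is not strictly convex everywhere it has a flat piece, hence $d,d_\ast$ are roughly similar, hence by Theorem~\ref{thm.manprops}(2) the whole curve is the straight segment between $(0,v_d)$ and $(v_{d_\ast},0)$.
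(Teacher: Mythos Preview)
Your overall strategy---flat piece forces a rigidity statement via vanishing variance and a Liv\v{s}ic argument---is the right intuition, but there is a genuine technical gap. The variance-equals-second-derivative identification and the Liv\v{s}ic machinery require that the potentials associated to $d$ and $d_\ast$ on a Cannon coding be H\"older (so that pressure is analytic and equilibrium states are Gibbs). For arbitrary $d,d_\ast\in\Dc_\G$ this is \emph{not} known: the general results of \cite{cantrell.tanaka.1} only give $C^1$ regularity of $\theta_{d_\ast/d}$, precisely because the associated potentials are merely ``tempered'' rather than H\"older. So the step ``set up a single symbolic model encoding both $d$ and $d_\ast$ as H\"older potentials with a spectral gap'' is not a routine verification---it is an open technical point, and in fact the paper only invokes the full thermodynamic package in Section~\ref{sec.auto} for \emph{word} metrics, where the potentials are locally constant.

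The paper sidesteps this issue entirely with a short structural reduction. Given a flat segment on $[a,b]$, it uses \cite[Prop.~4.1]{cantrell.reyes} to build interpolated pseudo metrics $d_t\in\Dc_\G$ roughly equal to $t\,d_\ast+\theta_{d_\ast/d}(t)\,d$ for $t\in[a,b]$. Affineness of $\theta_{d_\ast/d}$ on $[a,b]$ then forces the Manhattan curve of the pair $d_a,d_b$ to be the straight segment from $(0,1)$ to $(1,0)$, and now one is back in the already-proven case of Theorem~\ref{thm.manprops}(2) (equivalently \cite[Thm.~1.1]{cantrell.tanaka.1}): $d_a$ and $d_b$ are roughly similar, and unwinding the linear combination gives rough similarity of $d$ and $d_\ast$. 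The point is that this trick reduces ``flat somewhere'' to ``flat on $[0,1]$ for an auxiliary pair'', where the rigidity is already available, so no new analytic input about $\Dc_\G$ is needed.
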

\begin{proof}
We argue by the contrapositive.
Suppose the Manhattan curve $\theta_{d_\ast/d}$ is not strictly convex on its entire domain. Since this curve is $C^1$ and convex, it must contain a line segment, which we suppose is on the interval $[a,b]$ for some $a<b \in \R$. By \cite[Prop.~4.1]{cantrell.reyes} for each $t \in [a,b]$ there is a pseudo metric $d_t \in \Dc_\G$ that is within bounded distance of $td_* + \theta_{d_\ast/d}(t)d$. The Manhattan curve $\theta_{a,b}$ for the pair $d_a, d_b$ is then a straight line on the interval $[0,1]$ and so $d_a$ and $d_b$ are roughly similar by \cite[Thm.~1.1]{cantrell.tanaka.1}. This implies that $d$ and $d_\ast$ are roughly similar and so $\theta_{d_\ast/d}$ is a straight line.
\end{proof}


\section{Marked length spectra on subsets}\label{sec.lsos}
In this section we prove Theorems \ref{thm.main} and \ref{thm.continuous}.
We begin by defining $\beta(d,d_\ast)$, which will be the correlation constant featuring in these results.

\begin{definition}\label{def.beta}
       For $d,d_\ast \in \Dc_\G$  we define
    \[
    \beta(d,d_\ast) = v_d \sup_{t\in [0,1]} (1 - t - \theta_{v_{d_\ast} d_\ast/v_d d}(t)) = \sup_{t\in[0,v_{d_\ast}]}\left( v_d - \frac{v_d}{v_{d_\ast}} t - \theta_{d_\ast/d}(t)\right).
    \]
\end{definition}
The second equality above follows from the definition of the Manhattan curve. We can see $\beta(d,d_\ast)$ as quantifying how convex $\theta_{d_\ast/d}$ is on the interval $(0,v_{d_\ast})$.

\begin{lemma}\label{lem.eqality}
 Given $d,d_\ast \in \Dc_\G$ we have that $\beta(d,d_\ast) = 0$ if and only if $d$ and $d_\ast$ are roughly similar.
\end{lemma}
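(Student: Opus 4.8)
The plan is to prove Lemma~\ref{lem.eqality} by unwinding Definition~\ref{def.beta} and reducing everything to the dichotomy for Manhattan curves established in Proposition~\ref{thm.stricconvexvsstraight} together with the endpoint information in Theorem~\ref{thm.manprops}.

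First I would set $g(t) := v_d - \frac{v_d}{v_{d_\ast}}t - \theta_{d_\ast/d}(t)$ on $[0, v_{d_\ast}]$, so that $\beta(d,d_\ast) = \sup_{t\in[0,v_{d_\ast}]} g(t)$. By Theorem~\ref{thm.manprops}(2), $\theta_{d_\ast/d}$ passes through $(0,v_d)$ and $(v_{d_\ast},0)$, so $g(0) = g(v_{d_\ast}) = 0$; in other words, the line $t \mapsto v_d - \frac{v_d}{v_{d_\ast}}t$ is precisely the chord of the graph of $\theta_{d_\ast/d}$ between its two endpoints, and $g$ measures the (signed) vertical gap between this chord and the curve. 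Since $\theta_{d_\ast/d}$ is convex (Theorem~\ref{thm.manprops}(1)), the curve lies below the chord on $[0,v_{d_\ast}]$, hence $g \ge 0$ throughout and $\beta(d,d_\ast) \ge 0$ automatically (this also re-confirms the codomain $\R_{\ge 0}$ claimed for $\beta$).

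Now the equivalence. If $d,d_\ast$ are roughly similar, then by Theorem~\ref{thm.manprops}(2) the curve $\theta_{d_\ast/d}$ is a straight line between its endpoints, hence coincides with the chord, so $g \equiv 0$ and $\beta(d,d_\ast) = 0$. Conversely, suppose $\beta(d,d_\ast) = 0$, i.e. $g \equiv 0$ on $[0,v_{d_\ast}]$; then $\theta_{d_\ast/d}(t) = v_d - \frac{v_d}{v_{d_\ast}}t$ on this interval, so $\theta_{d_\ast/d}$ is affine on $[0,v_{d_\ast}]$ and in particular not strictly convex there. By the contrapositive of Proposition~\ref{thm.stricconvexvsstraight} (which asserts the curve is either a straight line or strictly convex everywhere), $\theta_{d_\ast/d}$ must be a straight line, and then Theorem~\ref{thm.manprops}(2) (or~(4)) gives that $d$ and $d_\ast$ are roughly similar.

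I do not anticipate a serious obstacle here: the only subtlety is making sure the interval on which we detect failure of strict convexity is genuinely nondegenerate — that is, that $v_{d_\ast} > 0$, so that $[0,v_{d_\ast}]$ has positive length and Proposition~\ref{thm.stricconvexvsstraight} can be invoked. This is guaranteed since $v_{d_\ast} > 0$ for every $d_\ast \in \Dc_\G$, as noted in the introduction. Everything else is a direct translation between ``chord equals curve on an interval'', ``curve is affine on that interval'', ``curve is globally a line'', and ``$d,d_\ast$ roughly similar''.
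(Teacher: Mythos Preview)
Your proof is correct and follows essentially the same approach as the paper: both arguments observe that, by convexity and the endpoint values of the Manhattan curve, $\beta(d,d_\ast)=0$ is equivalent to $\theta_{d_\ast/d}$ coinciding with its chord on $[0,v_{d_\ast}]$, and then conclude rough similarity. The only minor difference is that you pass through Proposition~\ref{thm.stricconvexvsstraight} for the converse, whereas the paper appeals directly to Theorem~\ref{thm.manprops}(2), which already asserts that $\theta_{d_\ast/d}$ is a straight line between $(0,v_d)$ and $(v_{d_\ast},0)$ if and only if $d,d_\ast$ are roughly similar; this makes your detour through global strict convexity unnecessary but harmless.
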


\begin{proof}
    Since $\theta_{v_{d_\ast}d_\ast/v_d d}$ is convex and goes through $(0,1)$ and $(1,0)$ we see that $\beta(d,d_\ast) = 0$ if and only if $\theta_{v_{d_\ast}d_\ast/v_d d}(t) = 1 -t$ for all $t\in[0,1]$. This is the case if and only if $d$ and $d_\ast$ are roughly similar by Theorem \ref{thm.manprops}.
\end{proof}

It follows from Theorem \ref{thm.manprops} and Proposition \ref{thm.stricconvexvsstraight} that for a pair of non-roughly similar pseudo metrics $d,d_\ast \in \Dc_\G$ with exponential growth rate $1$, there  is a unique solution $\xi \in (0,1)$ to $\theta_{d_\ast/d}'(\xi) = -1$.

\begin{definition}
Given a pair of non-roughly similar pseudo metrics $d,d_\ast \in \Dc_\G$ with exponential growth rate $1$, we define $\xi(d_\ast/d) \in (0, 1)$ to be the unique number satisfying $\theta_{d_\ast/d}'(\xi(d_\ast/d)) = -1$. We also define
\[
\alpha(d_\ast/d) = \xi(d_\ast/d) + \theta_{d_\ast/d}(\xi(d_\ast/d)).
\]
\end{definition}
It turns out that $\alpha(d_\ast/d)$ is symmetric in $d$ and $d_\ast$. The relation between $\alpha$ and $\beta$ is given by the next lemma.
\begin{lemma} \label{lem.sym}
    Suppose that $d,d_\ast \in \Dc_\G$ have exponential growth rate $1$ and are not roughly similar. Then the following identities hold:
    \begin{enumerate}
        \item $\alpha(d_\ast/d) = \xi(d_\ast/d) + \xi(d/d_\ast) = \alpha(d/d_\ast)$; and,
        \item $\beta(d,d_\ast) = 1-\alpha(d_\ast/d).$
    \end{enumerate}
\end{lemma}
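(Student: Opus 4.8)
The plan is to prove the two identities separately, getting $(1)$ from the fact that $\theta_{d/d_\ast}$ is the inverse function of $\theta_{d_\ast/d}$, and $(2)$ from a one-variable concave optimization of the function appearing in Definition \ref{def.beta}. Throughout I would write $\theta := \theta_{d_\ast/d}$ and first record the structural facts that drive everything: since $d,d_\ast$ have exponential growth rate $1$, Theorem \ref{thm.manprops} gives $\theta(0)=1$ and $\theta(1)=0$; since $d,d_\ast$ are not roughly similar, Proposition \ref{thm.stricconvexvsstraight} makes $\theta$ strictly convex, so $\theta'$ is continuous and strictly increasing; Theorem \ref{thm.manprops}(3) together with the failure of rough similarity gives the \emph{strict} inequalities $\theta'(0) < -1 < \theta'(1)$, which is precisely what makes $\xi(d_\ast/d)$ a well-defined interior point of $(0,1)$ with $\theta'(\xi(d_\ast/d)) = -1$; and $\theta$ is strictly decreasing on $[0,1]$ with $\theta^{-1} = \theta_{d/d_\ast}$, as noted after Theorem \ref{thm.manprops}. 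I would also note that $\xi(d/d_\ast)$ is well-defined because all the hypotheses are symmetric in $d$ and $d_\ast$.

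For identity $(1)$, I would differentiate the relation $\theta_{d/d_\ast} = \theta^{-1}$. Since $\theta$ is $C^1$ near $\xi(d_\ast/d)$ with nonzero derivative there, the inverse is differentiable at $s := \theta(\xi(d_\ast/d))$, and the chain rule gives $\theta_{d/d_\ast}'(s) = 1/\theta'(\theta^{-1}(s)) = 1/\theta'(\xi(d_\ast/d)) = -1$. As $s \in (0,1)$ and $\xi(d/d_\ast)$ is characterised as the unique point of $(0,1)$ at which $\theta_{d/d_\ast}'$ equals $-1$, we conclude $\xi(d/d_\ast) = \theta(\xi(d_\ast/d))$. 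Substituting this into the definition $\alpha(d_\ast/d) = \xi(d_\ast/d) + \theta(\xi(d_\ast/d))$ yields $\alpha(d_\ast/d) = \xi(d_\ast/d) + \xi(d/d_\ast)$; the right-hand side is visibly symmetric in $d$ and $d_\ast$, so it also equals $\alpha(d/d_\ast)$.

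For identity $(2)$, observe that since $v_d = v_{d_\ast} = 1$ we have $\theta_{v_{d_\ast}d_\ast/v_d d} = \theta$, so Definition \ref{def.beta} reads $\beta(d,d_\ast) = \sup_{t\in[0,1]} g(t)$ with $g(t) = 1 - t - \theta(t)$. Convexity of $\theta$ makes $g$ concave, and $g'(t) = -1 - \theta'(t)$ vanishes exactly at $t = \xi(d_\ast/d) \in (0,1)$; hence the supremum is attained there and equals $g(\xi(d_\ast/d)) = 1 - \xi(d_\ast/d) - \theta(\xi(d_\ast/d)) = 1 - \alpha(d_\ast/d)$.

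The calculations themselves are short. The one place that genuinely uses the hypotheses, and the step I would be most careful to phrase correctly, is the claim $\theta'(0) < -1 < \theta'(1)$ — i.e. that $-1$ lies in the interior of the range of $\theta'$ over $[0,1]$ — since this is what makes $\xi(d_\ast/d)$ interior and the inverse-function computation legitimate. This is where both ``exponential growth rate $1$'' (to pin down the endpoints $(0,1)$ and $(1,0)$ of the curve) and ``not roughly similar'' (to make the inequalities of Theorem \ref{thm.manprops}(3) strict) are invoked; the remainder is just the inverse function theorem and elementary one-variable calculus.
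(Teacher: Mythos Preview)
Your proof is correct and follows essentially the same approach as the paper: for (1) you use that $\theta_{d/d_\ast}=\theta_{d_\ast/d}^{-1}$ and the inverse function theorem where the paper phrases the same step as reflecting the graph across the line $y=x$, and for (2) both you and the paper locate the maximum of the concave function $t\mapsto 1-t-\theta_{d_\ast/d}(t)$ at its unique critical point $\xi(d_\ast/d)$. Your extra care in justifying $\theta'(0)<-1<\theta'(1)$ so that $\xi(d_\ast/d)$ is interior is a welcome clarification of something the paper asserts just before the lemma.
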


\begin{proof}
    (1) Recall that  $\xi(d_\ast/d)$ satisfies $\theta_{d_\ast/d}'(\xi(d_\ast/d)) = -1$. Also, note from the definition of the Manhattan curve that $\theta_{d/d_\ast}$ is the inverse function of $\theta_{d_\ast/d}$. In particular, if we reflect the point $(\xi(d_\ast/d), \theta_{d_\ast/d}(\xi(d_\ast/d)))$ across the line $y=x$ we obtain $(\xi(d/d_\ast), \theta_{d/d_\ast}(\xi(d/d_\ast)))$. This implies that $\theta_{d/d_\ast}(\xi(d/d_\ast)) = \xi(d_\ast/d)$ and $\alpha(d/d_\ast) = \xi(d/d_\ast) + \xi(d_\ast/d)$, as required. 

    \noindent (2) Since $\theta_{d_\ast/d}$ is convex and $C^1$ we see that $f(t) = 1-t - \theta_{d_\ast/d}(t)$ achieves its maximum when $f'(t) = 0$, which is precisely when $t = \xi(d_\ast/d)$. At this point we obtain $f(\xi(d_\ast/d)) = 1 - \alpha(d_\ast/d)$. 
\end{proof}

\begin{remark}\label{rem.symb}
   The lemma above implies that if $d,d_\ast$ have exponential growth rate $1$, then $\beta(d,d_\ast) = \beta(d_\ast, d)$.
\end{remark}

\begin{proof}[Proof of Theorem \ref{thm.main}]
Take two pseudo metrics $d,d_\ast \in \Dc_\G$. Since $v_{\widehat{d}} \widehat{d}$ has exponential growth rate 1 for every $\widehat{d}\in \calD_\G$, without loss of generality we can assume that $v_d=v_{d_\ast}=1$. Under these assumptions, conditions (1) and (2) are equivalent by Lemma \ref{lem.eqality}.

For the second assertion of the theorem, assume that $d$ and $d_\ast$ are not roughly similar. By applying Proposition \ref{thm.stricconvexvsstraight}, this implies that the Manhattan curve $\theta_{d_\ast/d}$ is strictly convex everywhere. For $\alpha= \alpha(d_\ast/d)$ and $\xi=\xi(d_\ast/d)$ as above, we define the pseudo metric on $\G$ given by 
\[
d_\xi(x,y) =\xi d_\ast(x,y)+\thet_{d_\ast/d}(\xi)d(x,y)= \alpha d(x,y) + \xi(d_\ast(x,y) - d(x,y))
\] for $x,y\in \G$. By \cite[Lem.~4.1]{reyes} we have that $d_\xi \in \Dc_\G$. Also, by Proposition \ref{thm.stricconvexvsstraight} we necessarily have  $\alpha(d_\ast/d) <1$. Indeed, the Manhattan curve $\theta_{d_\ast/d}$ is strictly convex and goes through $(0,1)$ and $(1,0)$, so it is strictly below the line $s(t) = 1-t$ on $(0,1)$. From the definition of the Manhattan curve, we see that $d_\xi$ has exponential growth rate $1$. Suppose now that $f:\R \ra \R_{\geq 0}$ is non-decreasing and such that $f(t)=o(t)$ as $t\to \infty$. If $x\in \G$ and $T>0$ satisfy
$d(o,x) \leq T$ and $|d(o,x) - d_\ast(o,x)| \leq f(d(o,x))$, then $d_\xi(o,x) \leq \alpha T + \xi f(T)$, and so we obtain
\small \begin{align*}
    \#\{ x\in\G: d(o,x) \leq T, |d(o,x) - d_\ast(o,x)| \leq f(d(o,x)) \} &\le  \#\{ x \in \G: d_\xi(o,x) \le \alpha T + \xi f(T)\}\\
    &= O(e^{\alpha  T+\xi f(T)})
\end{align*}
\normalsize as $T\to\infty$. By Gromov hyperbolicity there is a constant $C'>0$ depending only on $d$ and $d_\ast$ such that for any $[x] \in \conj$ there is $y\in[x]$ such that both $|\ell_d[x] - d(o,y)| \le C'$ and $|\ell_{d_\ast}[x] - d_\ast(o,y)| \le C'$ hold (see e.g.~\cite[Lem.~3.2]{cantrell.tanaka.1}). It follows that
\[
\#\{[x] \in \conj : \ell_d[x] \leq  T, \ |\ell_d[x] - \ell_{d_\ast}[x]| \le f(\ell_d[x])  \} = O(e^{\alpha  T+\xi f(T)}). 
\]
From this, and after rearranging and using Lemma \ref{lem.sym}, we see that if the distance between $\ell_d$ and $\ell_{d_\ast}$ on a set $E \subset \conj$ is bounded by a function that is sublinear on $\ell_d$, then the exponential growth rate of $E$ with respect to $d$ is at most $1 - \beta(d_\ast,d)$, i.e.
\[
v_d(E) \le 1 - \beta(d,d_\ast).
\]
This proves the second assertion, and hence the theorem.
\end{proof}

Note that in the proof of Theorem \ref{thm.main} we also established the following result.
\begin{theorem}\label{thm.disp}
Let $\G$ be a non-elementary hyperbolic group and let $\beta$ be the function defined above.
    Suppose that $d,d_\ast \in \Dc_\G$ are two pseudo metrics with exponential growth rates $v_d, v_{d_\ast} >0$. Then the following are equivalent:
    \begin{enumerate}
        \item $d$ and $d_\ast$ are not roughly similar; and,
        \item $\beta(d,d_\ast) > 0$.
    \end{enumerate}
    Furthermore, when either (hence both) of these conditions are satisfied we have that
      \[
\limsup_{T\to\infty} \frac{1}{T} \log \left( \# \{x \in U: d(o,x) \leq  T \} \right) \le v_d - \beta(d,d_\ast)
\]
for any function $f:\R \ra \R_{\geq 0}$ such that $f(t)=o(t)$ as $t \to \infty$ and any subset $U \subset \G$ for which $|v_d d(o,x) - v_{d_\ast} d_\ast(o,x) |\leq f(d(o,x))$ for all $x\in U$.
\end{theorem}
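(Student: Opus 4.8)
The plan is to observe that this is essentially the argument already carried out in the proof of Theorem \ref{thm.main}, run with the subset $U \subset \G$ of group elements in place of the subset $E \subset \conj$ of conjugacy classes; working with group elements directly is in fact marginally easier, since one skips the step of passing from a conjugacy class to a representative via Gromov hyperbolicity.

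First I would reduce to the normalized case $v_d = v_{d_\ast} = 1$. Since $v_{\hat d}\hat d$ has exponential growth rate $1$ for every $\hat d \in \Dc_\G$, and since (from Definition \ref{def.beta}) $\beta(v_d d, v_{d_\ast} d_\ast) = \beta(d,d_\ast)/v_d$, while both the hypothesis on $U$ and the asserted conclusion rescale correctly under $d \mapsto v_d d$ (replacing $f(t)$ by $f(t/v_d)$, which is still $o(t)$), the general statement follows from the case $v_d = v_{d_\ast} = 1$. In that case the equivalence of (1) and (2) is immediate from Lemma \ref{lem.eqality}, so it remains only to establish the displayed growth estimate, and for that we may assume $d, d_\ast$ are not roughly similar.

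Next I would invoke Proposition \ref{thm.stricconvexvsstraight} to conclude that $\theta_{d_\ast/d}$ is strictly convex everywhere, so there is a unique $\xi = \xi(d_\ast/d) \in (0,1)$ with $\theta'_{d_\ast/d}(\xi) = -1$; strict convexity and the fact that $\theta_{d_\ast/d}$ joins $(0,1)$ to $(1,0)$ forces $\alpha := \alpha(d_\ast/d) = \xi + \theta_{d_\ast/d}(\xi) < 1$. As in the proof of Theorem \ref{thm.main}, I would then consider the pseudo metric $d_\xi(x,y) = \xi d_\ast(x,y) + \theta_{d_\ast/d}(\xi) d(x,y)$, which lies in $\Dc_\G$ by \cite[Lem.~4.1]{reyes} and has exponential growth rate $1$ by the definition of the Manhattan curve. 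After replacing $f$ by $t \mapsto \sup_{0 \le s \le t} f(s)$ we may assume $f$ is non-decreasing (this modification is still $o(t)$ and still bounds $|v_d d(o,x) - v_{d_\ast} d_\ast(o,x)|$ on $U$). Then for any $x \in U$ with $d(o,x) \le T$ we get $d_\xi(o,x) = \alpha d(o,x) + \xi(d_\ast(o,x) - d(o,x)) \le \alpha T + \xi f(T)$, hence
\[
\#\{x \in U : d(o,x) \le T\} \le \#\{x \in \G : d_\xi(o,x) \le \alpha T + \xi f(T)\} = O(e^{\alpha T + \xi f(T)})
\]
as $T \to \infty$; taking $\tfrac{1}{T}\log$, letting $T\to\infty$, and using $\xi f(T)/T \to 0$ shows the $\limsup$ is at most $\alpha = 1 - \beta(d,d_\ast)$ by Lemma \ref{lem.sym}(2). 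Undoing the normalization yields the stated bound $v_d - \beta(d,d_\ast)$.

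There is no essentially new difficulty here: strict convexity of the Manhattan curve, the membership $d_\xi \in \Dc_\G$ with growth rate one, and the identity relating $\alpha$ and $\beta$ are all already in hand from Proposition \ref{thm.stricconvexvsstraight}, \cite[Lem.~4.1]{reyes}, and Lemma \ref{lem.sym}. The only points requiring (routine) care are the reduction to non-decreasing $f$ and the tracking of the $v_d$-rescaling, which is precisely why this statement can be recorded as a by-product of the proof of Theorem \ref{thm.main} rather than needing a fresh argument.
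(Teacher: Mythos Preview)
Your proposal is correct and follows the paper's approach exactly: the paper records Theorem \ref{thm.disp} as a by-product of the proof of Theorem \ref{thm.main}, since the group-element estimate $\#\{x\in U: d(o,x)\le T\}\le \#\{x\in\G: d_\xi(o,x)\le \alpha T+\xi f(T)\}=O(e^{\alpha T+\xi f(T)})$ is established there first, before passing to conjugacy classes. Your handling of the normalization, the reduction to non-decreasing $f$, and the identification $\alpha=1-\beta(d,d_\ast)$ via Lemma \ref{lem.sym} all match the paper.
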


Now we turn our attention to Theorem \ref{thm.continuous}. Recall the definition of $\overline{\beta}$ from the introduction.
\begin{definition}
    We extend $\beta : \Dc_\G \times \Dc_\G \to \R$ to $\overline{\beta} : \scrD_\G \times \scrD_\G \to \R$ by setting
    \[
    \overline{\beta}([d],[d_\ast]) =  \frac{\beta(d,d_\ast)}{v_d} = \beta(v_d d, v_{d_\ast} d_\ast).
    \]
\end{definition}

Note that $\overline{\beta}$ is well-defined: if $d$ and $d'$ are roughly similar then $v_d d$ and $v_{d'} d'$ are roughly isometric and so $\beta(v_d d, v_{d_\ast} d_\ast) = \beta(v_{d'}d', v_{d_\ast} d_\ast)$ and $\beta(v_{d_\ast} d_\ast, v_d d) = \beta(v_{d_\ast} d_\ast, v_{d'} d')$ for any $d_\ast \in \Dc_\G$. 

\begin{lemma}\label{lem.uc}
    Suppose that $[d_n] \to [d]$ in $(\scrD_\G, \Delta)$ as $n \to \infty$ and that $d_n, d, d_\ast \in \Dc_\G$ have exponential growth rate $1$ for each $n \ge 1$. Then $\theta_{d_\ast/d_n}$ and $\theta_{d_n/d_\ast}$ converge uniformly on compact sets to $\theta_{d_\ast/d}$ and $\theta_{d/d_\ast}$ respectively.
\end{lemma}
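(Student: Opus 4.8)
The plan is to reduce the statement to the convergence of the stable translation length functions — exactly the data controlled by $\Delta$ — and then transfer this to the Manhattan curves. First I would record the effect of $[d_n]\to[d]$ on translation lengths. Put $\epsilon_n:=\Delta([d_n],[d])$, so $\epsilon_n\to 0$. Whenever $e,e'$ are two of the metrics $d_n,d,d_\ast$, one has $\Dil(e,e')\ge 1$: the inequality $\ell_e[x]\le \Dil(e,e')\,\ell_{e'}[x]$ valid for all $[x]$ gives, upon comparing the associated counting functions $\#\{[x]\in\conj:\ell_e[x]\le T\}$, the bound $v_{e'}\le \Dil(e,e')\,v_e$, and $v_e=v_{e'}=1$. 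Hence in $\Delta([d_n],[d])=\log\Dil(d_n,d)+\log\Dil(d,d_n)$ both summands are nonnegative, so each dilation is at most $e^{\epsilon_n}$, and therefore
\[
e^{-\epsilon_n}\,\ell_d[x]\ \le\ \ell_{d_n}[x]\ \le\ e^{\epsilon_n}\,\ell_d[x]\qquad\text{for all }[x]\in\conj .
\]

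The key input is that the Manhattan curve, although defined by a Poincar\'e-type series in the displacements of arbitrary group elements, is in fact determined by the translation length functions: for $e,e_\ast\in\Dc_\G$ of exponential growth rate $1$ and every $b\in\R$,
\[
\theta_{e/e_\ast}(b)=\limsup_{T\to\infty}\frac1T\log\sum_{\substack{[x]\in\conj'\\ \ell_{e_\ast}[x]\le T}}e^{-b\,\ell_e[x]},
\]
which follows from the thermodynamic-formalism description of Manhattan curves in \cite{cantrell.tanaka.1} together with the standard comparison between the growth of group elements and of conjugacy classes in a hyperbolic group. Granting this and applying it with $e=d_n$ and $e_\ast=d_\ast$ (so the varying metric sits in the weight and the fixed metric $d_\ast$ in the summation range), the estimate above gives, for $b\ge 0$, the pointwise inequalities $e^{-(be^{\epsilon_n})\ell_d[x]}\le e^{-b\,\ell_{d_n}[x]}\le e^{-(be^{-\epsilon_n})\ell_d[x]}$; summing over $[x]$ with $\ell_{d_\ast}[x]\le T$ and taking $\limsup_{T}\frac1T\log$ shows that $\theta_{d_n/d_\ast}(b)$ lies between $\theta_{d/d_\ast}(be^{\epsilon_n})$ and $\theta_{d/d_\ast}(be^{-\epsilon_n})$, and for $b<0$ one gets the analogous sandwich with the two exponents exchanged. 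Since $\theta_{d/d_\ast}$ is continuous (Theorem \ref{thm.manprops}(1)) and $be^{\pm\epsilon_n}\to b$, both bounding quantities tend to $\theta_{d/d_\ast}(b)$, so $\theta_{d_n/d_\ast}\to\theta_{d/d_\ast}$ pointwise on $\R$.

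To conclude I would upgrade this to local uniform convergence and pass to inverses. The functions $\theta_{d_n/d_\ast}$ are convex on $\R$ (Theorem \ref{thm.manprops}(1)) and converge pointwise to the finite convex function $\theta_{d/d_\ast}$, hence they converge uniformly on compact subsets of $\R$; this is the first assertion. For the second, recall $\theta_{d_\ast/d_n}=\theta_{d_n/d_\ast}^{-1}$ and $\theta_{d_\ast/d}=\theta_{d/d_\ast}^{-1}$. By Theorem \ref{thm.manprops} and the inverse relation recorded after it, each of these curves is a $C^1$ convex bijection of $\R$ which, passing through $(0,1)$ and $(1,0)$, is strictly decreasing, hence a homeomorphism; the inverse of such a map is again convex and decreasing, so local uniform convergence of $\theta_{d_n/d_\ast}$ to $\theta_{d/d_\ast}$ forces local uniform convergence of the inverses $\theta_{d_\ast/d_n}$ to $\theta_{d_\ast/d}$, as required.

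I expect the only genuine difficulty to be the displayed translation-length description of the Manhattan curve: it is precisely what makes $\Delta$, which a priori only sees stable translation lengths, strong enough to control the curves, and it is the one ingredient not directly available from the soft properties recorded in Theorem \ref{thm.manprops} or Proposition \ref{thm.stricconvexvsstraight}. Everything else — the two sandwich estimates, the convexity upgrade, and the inverse-function step — is routine. If one prefers to avoid this input, the same estimates can be run instead at the level of group-element displacements, using that $\Dil$ is also computed by the asymptotic ratio of displacements.
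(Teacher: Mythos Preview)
Your argument is correct, but the route is genuinely different from the paper's. You work entirely at the level of stable translation lengths: you extract $e^{-\epsilon_n}\ell_d\le\ell_{d_n}\le e^{\epsilon_n}\ell_d$ from $\Delta$, feed this into a conjugacy-class formula for $\theta$, obtain the sandwich $\theta_{d/d_\ast}(be^{\pm\epsilon_n})$, upgrade pointwise-to-locally-uniform via convexity, and then handle $\theta_{d_\ast/d_n}$ by inverting. The paper instead invokes \cite[Thm.~1.1]{cantrell.reyes} to pass from $[d_n]\to[d]$ in $\Delta$ to a \emph{displacement}-level comparison $C_{0,n}d(o,x)-D_n\le d_n(o,x)\le C_{1,n}d(o,x)+D_n$ with $C_{i,n}\to 1$; plugging this directly into the defining Poincar\'e series yields the explicit bound $|\theta_{d_\ast/d_n}(a)-\theta_{d_\ast/d}(a)|\le\max(|C_{0,n}^{-1}-1|,|C_{1,n}^{-1}-1|)\,|\theta_{d_\ast/d}(a)|$, which gives local uniform convergence immediately and symmetrically for both curves. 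Your approach is more elementary in that it avoids \cite[Thm.~1.1]{cantrell.reyes}, at the cost of needing the translation-length description of $\theta$ (which you correctly flag as the one nontrivial input) and the convexity/inverse-function step; the paper's approach is shorter and gives a quantitative estimate but imports that displacement comparison as a black box. Your closing remark that one could ``run the same estimates at the level of displacements'' is exactly what the paper does, and the passage from dilation bounds to additive displacement bounds is precisely the content of \cite[Thm.~1.1]{cantrell.reyes}.
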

\begin{proof}
      By \cite[Thm.~1.1]{cantrell.reyes} there exist sequences $C_{0,n}, C_{1,n}, D_{n} >0$ such that $C_{0,n}, C_{1,n} \to 1$ as $n\to\infty$ with
    \[
    C_{0,n} d(o,x) -D_n \le d_n(o,x) \le C_{1,n}d(o,x) + D_n \  \text{ for all $x \in \G, n \ge 1$.}
    \]
    Now note that for $a,b \in \R$, the quantity $\exp( -ad_\ast(o,x) - bd_n(o,x))$ is either bounded above by $\exp(-ad_\ast(o,x) - b C_{0,n} d(o,x) + bD_n )$ and below by $\exp(-ad_\ast(o,x) - b C_{1,n} d(o,x) - bD_n )$ or vice versa depending on the sign of $b$. In either case it follows from the definition of the Manhattan curve that
    \[
    | \theta_{d_\ast/d_n}(a) - \theta_{d_\ast/d}(a)  | \le \max(|C_{0,n}^{-1} -1|, |C_{1,n}^{-1} -1 | ) |\theta_{d_\ast/d}(a)|
    \]
    and we see that $\theta_{d_\ast/d_n}$ converge uniformly on compact sets to $\theta_{d_\ast/d}$. The same method also shows that  $\theta_{d_n/d_\ast}$ converge uniformly on compact sets to $\theta_{d/d_\ast}$ and the proof is complete.
\end{proof}

\begin{proof}[Proof of Theorem \ref{thm.continuous}]
By definition 
\[
\overline{\beta}([d],[d_\ast]) = \sup_{t\in [0,1]} (1 - t - \theta_{v_{\ast} d_\ast/v_d d}(t)).
\]
From our above comments, we know that $\overline{\beta}$ is well-defined, and by Remark \ref{rem.symb} we see that it is symmetric in its variables, which settles (1).

To prove (2), suppose that $[d_n] \to [d]$ in $(\scrD_\G, \Delta)$ as $n\to\infty$. Without loss of generality we can assume that each $d,d_\ast, d_n$ have exponential growth rate $1$. By Lemma \ref{lem.uc} we have that $\theta_{ d_\ast,  d_n}$ converges uniformly to $\theta_{d_\ast/ d}$ on $[0,1]$ and so 
\[
\overline{\beta}([d_n],[d_\ast]) = \sup_{t\in [0,1]} (1 - t - \theta_{ d_\ast/ d_n}(t)) \to \sup_{t\in [0,1]} (1 - t - \theta_{ d_\ast/ d}(t))
= \overline{\beta}([d],[d_\ast])
\]
as $n\to\infty$.

Finally, to prove (3)
we fix two pseudo metrics $d,d_\ast \in \Dc_\G$ with exponential growth rate $1$.
    By strict convexity of the Manhattan curve $\theta_{d_\ast/d}$, the lines $s_1(t)= 1 - \Dil(d_\ast,d) t$ and $s_2(t) = \Dil(d,d_\ast)^{-1} - \Dil(d,d_\ast)^{-1} t$ lie beneath $\theta_{d_\ast/d}$ on $(0,1)$. In particular, we have that
     $\beta(d,d_\ast) \le \sup_{t\in[0,1]} (1 - t - F(t))$ where
    \[
     F(t) = \max(1 - \Dil(d_\ast,d) t, \Dil(d,d_\ast)^{-1} - \Dil(d,d_\ast)^{-1} t). 
    \]
    It follows, since the graphs of $s_1$ and $s_2$ intersect at
    \[
    \left( \frac{\Dil(d,d_\ast) -1}{\Dil(d,d_\ast)\Dil(d_\ast,d) -1 },  \frac{\Dil(d_\ast,d) -1}{\Dil(d,d_\ast)\Dil(d_\ast,d) -1 } \right),
    \]
    that
    \[
    \beta(d,d_\ast) \le \frac{(\Dil(d,d_\ast)-1)(\Dil(d_\ast,d)-1)}{\Dil(d,d_\ast)\Dil(d_\ast,d) - 1} = \frac{(\Dil(d,d_\ast)-1)(\Dil(d_\ast,d)-1)}{e^{\Delta([d],[d_\ast])} - 1}.
    \]
    To conclude the proof we note that
   \small \begin{align*}
      (\Dil(d,d_\ast)-1)(\Dil(d_\ast,d)-1) &\le \Dil(d,d_\ast)\Dil(d_\ast,d) - 2 \sqrt{\Dil(d,d_\ast)\Dil(d_\ast,d)} +1   \\
      &= \left( e^{\frac{1}{2}\Delta([d],[d_\ast]) } -1 \right)^2
    \end{align*}
  \normalsize by the arithmetic-geometric mean inequality. Strict convexity guarantees that this inequality is strict if and only if $d$ and $d_\ast$ are not roughly similar.
\end{proof}
Part (3) then shows that
    if $d,d_\ast \in \Dc_\G$ are not roughly similar and have exponential growth rate $1$ then for all $t \in (0,1)$ we have
    \[
    \frac{2}{e^{\frac{1}{2} \Delta([d],[d_\ast])}+1} <  t + \theta_{d_\ast/d}(t) < 1 \ \text{ and } \ 
    \alpha(d_\ast/d) \ge \frac{2}{e^{\frac{1}{2} \Delta([d],[d_\ast])}+1}.
    \]
    This provides estimates on the exponential growth rate for the correlation asymptotics of Schwartz and Sharp \cite{schwartz.sharp}, Dal'bo \cite{dalbo} and Cantrell \cite{cantrell} in terms of the symmetrized Thurston distance.


\section{Achieving the growth rate for word metrics and automorphisms}\label{sec.auto}
In this section we prove that the growth rate constant $\beta$ in Theorem \ref{thm.main} is optimal for pairs of metrics related by automorphisms of $\G$, or more generally, for word metrics satisfying a constraint on their growth rates.
If $S\subset \G$ is a finite, symmetric generating set with word metric $d_S$, we denote $\ell_S=\ell_{d_S}$ and $v_S=v_{d_S}$. We obtain the following.

\begin{theorem}\label{thm.word}
     Let $\G$ be a non-elementary hyperbolic group.
    Consider two finite, symmetric generating sets $S, S_\ast$ for $\G$ for which $v_S/v_{S_\ast}$ is rational. Then we have that
  \[
   \limsup_{T\to\infty} \frac{1}{T} \log (\#\left\{ [x] \in \conj: \ell_S[x] \leq T, v_S\ell_S[x] = v_{S_\ast}\ell_{S_\ast}[x]\right\} )= v_S - \beta(d_S,d_{S_\ast}).
   \]
\end{theorem}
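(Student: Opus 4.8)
The plan is to establish the matching lower bound for the exponential growth rate; the upper bound already follows from Theorem~\ref{thm.main} applied with $E = \{[x] : v_S\ell_S[x] = v_{S_\ast}\ell_{S_\ast}[x]\}$ and $f \equiv 0$, since this $E$ trivially satisfies the hypothesis $|v_S\ell_S[x] - v_{S_\ast}\ell_{S_\ast}[x]| \le f(\ell_S[x])$, giving $v_{d_S}(E) \le v_S - \beta(d_S,d_{S_\ast})$. So everything reduces to producing \emph{enough} conjugacy classes on which $v_S\ell_S$ and $v_{S_\ast}\ell_{S_\ast}$ agree exactly. Normalising, write $v_S/v_{S_\ast} = p/q$ with $p,q$ coprime positive integers, so that the target set is $\{[x] : q\,v_S\ell_S[x] = p\,v_S\ell_{S_\ast}[x]\}$, i.e. $q\,\ell_S[x] = p\,\ell_{S_\ast}[x]$; note that for word metrics $\ell_S[x]$ and $\ell_{S_\ast}[x]$ are rational (indeed, eventually these translation lengths are realised by integer values on powers), so the rationality hypothesis is exactly what makes this constraint non-vacuous.

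The main step is a thermodynamic/combinatorial counting argument. Consider the pseudo metric $d_\xi$ from the proof of Theorem~\ref{thm.main}, where $\xi = \xi(d_{S_\ast}/d_S) \in (0,1)$ is the point where $\theta_{d_{S_\ast}/d_S}'(\xi) = -1$; recall $d_\xi \in \Dc_\G$ is within bounded distance of $\xi d_{S_\ast} + \theta_{d_{S_\ast}/d_S}(\xi) d_S = \alpha d_S + \xi(d_{S_\ast} - d_S)$ and has exponential growth rate $1 = \alpha(d_{S_\ast}/d_S) + \beta$-complement... more precisely $v_{d_\xi} = 1$ and $\alpha(d_{S_\ast}/d_S) = 1 - \beta(d_S,d_{S_\ast})$. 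The key point is that conjugacy classes $[x]$ with $d_S(o,y) \approx T$ and $d_{S_\ast}(o,y) \approx \tfrac{q}{p}T$ (for a shortest representative $y$) are precisely those contributing the maximal exponential rate $e^{(\,1-\beta\,)T} = e^{\alpha T}$ to the count of $d_\xi$-short elements, because the linear functional $(a,b)\mapsto \xi a + \theta(\xi) b$ that defines $d_\xi$ is the one tangent to the Manhattan curve at slope $-1$, which corresponds exactly to the ``direction'' $d_{S_\ast}/d_S \sim q/p$. Thus one wants an equidistribution statement: among elements with $d_\xi(o,x) \le T$, the ratio $d_{S_\ast}(o,x)/d_S(o,x)$ concentrates near $-\theta_{d_{S_\ast}/d_S}'(\xi)$-type value $= q/p$. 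I would obtain this from the large deviation principle of Cantrell--Tanaka \cite[Thm.~4.23]{cantrell.tanaka.1} (the same tool invoked for Theorems~\ref{thm.main2} and~\ref{thm.domination}): the pressure/rate function vanishes exactly at the intersection-number value, and because $v_S/v_{S_\ast}$ is rational, the concentration value $q/p$ is actually \emph{hit} — not merely approximated — by a positive-density (in the exponential sense $e^{\alpha T}$, up to subexponential corrections) set of conjugacy classes.

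The hard part will be upgrading the ``approximate agreement'' one naturally gets from a large deviation estimate ($|q\,\ell_S[x] - p\,\ell_{S_\ast}[x]| = o(T)$, or even $O(1)$) to \emph{exact} agreement $q\,\ell_S[x] = p\,\ell_{S_\ast}[x]$ for exponentially many classes. Here is where rationality does real work: since $\ell_S$ and $\ell_{S_\ast}$ take values in $\tfrac1{N}\Z$ for a fixed $N$ (depending on torsion / the structure of the automatic structures for $d_S$ and $d_{S_\ast}$), the quantity $q\,\ell_S[x] - p\,\ell_{S_\ast}[x]$ lies in a fixed discrete set, so an $O(1)$-approximation forces it into finitely many values; a pigeonhole then gives exponentially many classes with a \emph{fixed} value of this difference, say $c_0$, and one finishes by a shifting/concatenation argument — multiplying representatives by a fixed element, or passing to suitable powers, to translate the common value $c_0$ to $0$ while only changing the growth rate subexponentially. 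Making the discreteness claim precise (that $\ell_S, \ell_{S_\ast}$ have a common denominator) and controlling the concatenation without losing exponential mass are the two technical points I expect to require the most care; the large deviation input itself I would treat as a black box from \cite{cantrell.tanaka.1}, exactly as elsewhere in this paper.
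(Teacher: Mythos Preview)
Your upper bound is fine and matches the paper's implicit use of Theorem~\ref{thm.main}. The lower bound, however, has a genuine gap at exactly the point you flag as delicate.

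The large deviation principle from \cite{cantrell.tanaka.1} gives only \emph{upper} bounds on growth rates of sets where the ratio $\ell_{S_\ast}/\ell_S$ lies in a closed set avoiding the mean, and (in its lower-bound form) lower bounds for \emph{open} neighbourhoods of the mean. Neither yields a lower bound for the growth rate of the \emph{exact} level set $\{[x]: q\,\ell_S[x] = p\,\ell_{S_\ast}[x]\}$. You suggest upgrading $o(T)$ to $O(1)$ error, but the LDP does not provide this; an $O(1)$ window is a local limit theorem phenomenon, not a large deviations one. More seriously, your shifting step does not work: translation lengths are conjugation-invariant, so multiplying representatives by a fixed element leaves them unchanged; and passing to powers sends $q\,\ell_S[x]-p\,\ell_{S_\ast}[x]=c_0$ to $kc_0$, which moves away from zero rather than toward it. There is no group-theoretic operation on conjugacy classes that shifts this difference by a prescribed constant while preserving exponential mass.

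What is actually required is a lattice local limit theorem, and the paper obtains it by passing to the Cannon automatic structure for $(\G,S)$. This produces a subshift of finite type $(\Sigma,\sigma)$ and an integer-valued H\"older potential $r_\ast$ whose Birkhoff sums over periodic orbits return $\ell_{S_\ast}$; the rationality hypothesis then makes $\overline r = p(v_{S_\ast}r_\ast/v_S - 1)$ integer-valued. One checks that $\overline r$ is not cohomologous to a constant (else $d_S,d_{S_\ast}$ would be roughly similar) and that it has zero mean against the equilibrium state at the point $\xi$ on the Manhattan curve where $\theta'_{S_\ast/S}(\xi)=-v_S/v_{S_\ast}$. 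These are precisely the hypotheses of Pollicott--Sharp \cite[Thm.~1]{pollicott.sharp}, which then gives $\#\{z:\sigma^{mn}z=z,\ \overline r^{\,mn}(z)=0\}\gtrsim e^{\alpha\, mn}(mn)^{-3/2}$ with $\alpha=\alpha(d_S,d_{S_\ast})=v_S-\beta(d_S,d_{S_\ast})$ identified via the pressure description of the Manhattan curve. Converting periodic orbits to conjugacy classes costs only a polynomial factor (Lemma~\ref{lem.count}), and the lower bound follows. The Cannon coding and the Pollicott--Sharp input are not optional refinements of your outline: they are the mechanism that produces exact-value counting, and your LDP-plus-pigeonhole route cannot substitute for them.
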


The proof of Theorem \ref{thm.word} requires techniques from thermodynamic formalism and uses the Cannon automatic structure for hyperbolic groups. To avoid having to define all of these objects here, we refer the reader to \cite{cantrell}. Throughout the proof, we will follow same terminology and notation as that used in \cite{cantrell}. We require the following lemma which will also be useful in Subsection \ref{subsec.coame}.

\begin{lemma}\label{lem.count}
Given $d\in \calD_\G$ with exponential growth rate $v_d$ there exists $C\geq 0$ such that the following holds. For any $[x]\in \conj$ and $T>0$ we have
$$\log (\# \{y\in [x] : d(o,y) \leq T \})\leq \log(\ell_d[x]+C)+\frac{v_d(T-\ell_d[x])}{2}+C.$$
\end{lemma}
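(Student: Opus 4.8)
The statement bounds the number of elements in a single conjugacy class $[x]$ lying within a $d$-ball of radius $T$, and the shape of the bound — a main term $e^{v_d(T - \ell_d[x])/2}$ — strongly suggests the following picture: if $y \in [x]$ with $d(o,y) \le T$, write $y = g x_0 g^{-1}$ where $x_0$ is a fixed ``short'' representative of $[x]$ (one with $d(o,x_0) \le \ell_d[x] + C'$ for a hyperbolicity constant $C'$, as furnished by \cite[Lem.~3.2]{cantrell.tanaka.1}). Then $d(o,y) = d(o, g x_0 g^{-1})$, and by $\G$-invariance $d(g, g x_0) = d(o, x_0) \le \ell_d[x] + C'$, so $y$ is determined by the pair $(g, g x_0 g^{-1})$ where $g$ sends the basepoint to a point moved distance $\le \ell_d[x]+C'$ by the conjugate. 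The plan is to parametrize $y$ by (essentially) the coset $g \langle x_0 \rangle$, or more precisely by a near-midpoint of the geodesic-like path from $o$ to $y$, and count how many such parameters can fit in the ball.

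First I would make this precise using rough geodesics: since $d \in \calD_\G$ is $\alpha$-roughly geodesic for some $\alpha$, choose a rough geodesic from $o$ to $y = gx_0g^{-1}$. The element $g$ (up to bounded error, and up to replacing it within the coset $g\langle x_0\rangle$) can be taken so that $d(o,g) \approx d(g, y)$, i.e. $g$ lies near the ``midpoint''; this is the standard fact that a conjugate $gx_0g^{-1}$ of small translation length with $d(o,\cdot)$ large forces $g$ and $gx_0$ to both be roughly at distance $\tfrac12 d(o,y)$ from $o$ with the whole configuration thin. Concretely, using the Gromov product and $\delta$-hyperbolicity one shows $(o \mid y)^d_{g} $ and $(o\mid y)^d_{gx_0}$ are bounded, so that $d(o,g) \le \tfrac12(d(o,y) + d(o,x_0)) + O(\delta) \le \tfrac12 T + \tfrac12 \ell_d[x] + O(1)$. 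Hence $g$ ranges over the $d$-ball of radius $\tfrac12(T + \ell_d[x]) + O(1)$. Two elements $y, y'$ in $[x]$ coming from $g, g'$ in the same left coset $g\langle x_0\rangle$ are conjugate by nearby elements and in fact differ only by the finitely-many-choices of the near-midpoint; more carefully, $y$ determines $g$ up to the right action of the (infinite) cyclic-ish centralizer, but the near-midpoints of a fixed $y$ form a set of bounded diameter, so a fixed $y$ corresponds to at most $O(1)$ choices of $g$ in that ball after we fix the midpoint normalization — wait, that's backwards; rather, distinct $y$'s give $g$'s that, after midpoint-normalization, lie in distinct cosets, but more simply: the map $g \mapsto gx_0g^{-1}$ restricted to midpoint-normalized $g$ is at most $(\ell_d[x]+C)$-to-one, since the fiber over $y$ is contained in $g\langle x_0\rangle$ intersected with a bounded-diameter set, which has size $O(\ell_d[x])$ because consecutive powers of $x_0$ are $d$-separated by roughly $\ell_d[x]$ — actually they're separated by $d(o,x_0) \ge \ell_d[x]$, giving the $\log(\ell_d[x]+C)$ term only if the bounded set has diameter $O(\ell_d[x])$, which is exactly the case. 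So:
\[
\#\{y \in [x] : d(o,y)\le T\} \;\le\; (\ell_d[x]+C)\cdot \#\{g \in \G : d(o,g) \le \tfrac12(T+\ell_d[x]) + C\}.
\]
Finally I would invoke the submultiplicative/growth estimate: by Proposition \ref{prop.samegrowthrate} (that $v_d$ equals the ball-counting exponential growth rate), $\#\{g : d(o,g) \le R\} \le C'' e^{v_d R + o(R)}$, and in fact a clean non-asymptotic bound $\#\{g: d(o,g)\le R\} \le e^{v_d R + C}$ holds after enlarging $C$ (this follows from submultiplicativity of the counting function along a rough-geodesic/word-metric approximation, Lemma \ref{lem:HDLFapproxword}, together with $v_d = \lim \tfrac1R \log(\cdots)$; one gets $N(R) \le e^{v_d R + C}$ with uniform $C$). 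Plugging $R = \tfrac12(T+\ell_d[x]) + C$ gives $\#\{y\in[x]: d(o,y)\le T\} \le (\ell_d[x]+C)\, e^{v_d(T+\ell_d[x])/2 + C}$, and taking logarithms yields
\[
\log \#\{y\in[x]:d(o,y)\le T\} \;\le\; \log(\ell_d[x]+C) + \frac{v_d(T+\ell_d[x])}{2} + C,
\]
which is not quite the claimed bound — the claimed bound has $T - \ell_d[x]$, not $T+\ell_d[x]$, in the exponent. This means the midpoint estimate must be sharpened: the point is that $g$ cannot merely be confined to radius $\tfrac12(T+\ell_d[x])$; rather, because $gx_0g^{-1}$ translates the rough-geodesic axis of $x_0$ by only $\ell_d[x]$, the element $g$ must map a neighborhood of $o$ into a neighborhood of the axis, and the true count of admissible midpoint-normalized $g$ is governed by $\#\{g : d(o,g) \le \tfrac12 T\}$ with a correction that *subtracts* $\ell_d[x]$ — geometrically, the ``useful'' part of $g$ only reaches distance $\tfrac12(T - \ell_d[x])$ before the axis contributes the remaining $\ell_d[x]$ to $d(o,y)$. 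Making this precise is the crux: I would set $y = gx_0g^{-1}$, pass to a rough geodesic $o \to y$, and show using $\delta$-thinness that $o, g, gx_0, y$ sit along this quasigeodesic in order with $d(g, gx_0) \approx \ell_d[x]$, forcing $d(o,g) + d(gx_0, y) \le T - \ell_d[x] + O(\delta)$ and hence $d(o,g) \le \tfrac12(T - \ell_d[x]) + O(\delta)$ after symmetrizing, where $g$ is normalized to be a near-midpoint.

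The main obstacle, then, is this sharp midpoint/fellow-travelling estimate: controlling the fiber of $g \mapsto gx_0g^{-1}$ (handled by the $O(\ell_d[x])$-separation of powers of $x_0$, giving the $\log(\ell_d[x]+C)$ factor) is routine, and the passage from ball-counting to the $e^{v_d R}$ bound is standard, but getting $T - \ell_d[x]$ rather than $T + \ell_d[x]$ in the exponent requires carefully exploiting that a long conjugate of a short element forces its conjugator to ``waste'' length $\ell_d[x]$ travelling along the axis. I expect this to follow from a clean application of the four-point $\delta$-hyperbolicity inequality to the configuration $\{o, g, gx_0, gx_0g^{-1}\}$ combined with $\G$-invariance ($d(g,gx_0) = d(o,x_0)$, $d(gx_0, gx_0g^{-1}) = d(o, g^{-1}) = d(o,g)$), which pins down all six pairwise distances up to $O(\delta)$ once we know $d(o,y)$ and the near-centrality of the configuration, after which the bound on $d(o,g)$ drops out and the rest is bookkeeping.
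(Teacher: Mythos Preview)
Your overall strategy---write $y = g x_0 g^{-1}$ for a short representative $x_0$, normalize the conjugator, and then count via a ball-growth estimate---is the paper's strategy, and you correctly diagnose that the naive estimate produces the wrong exponent $\tfrac12(T+\ell)$. The gap is in your proposed fix.

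You want to normalize $g$ within the coset $g\langle x_0\rangle$ so that $o, g, gx_0, y$ lie in order on a rough geodesic, which would force $d(o,g)\le\tfrac12(T-\ell)+O(\delta)$ since $d(gx_0,y)=d(o,g)$ and $d(g,gx_0)\approx\ell$. But this alignment cannot in general be achieved. The points $gx_0^n$ are $\ell$-spaced along the axis of $y$, whereas the nearest-point projection $p$ of $o$ onto that axis can fall anywhere between two consecutive ones. In a tree one has $d(o,gx_0^n)=d(o,p)+d(p,gx_0^n)$, so $\min_n d(o,gx_0^n)$ ranges over $[\tfrac{T-\ell}{2},\tfrac{T}{2}]$ depending on where $p$ lands; the four-point inequality on $\{o,g,gx_0,y\}$ cannot do better, since all six pairwise distances are compatible with $d(o,g)=\tfrac{T}{2}$. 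The resulting bound $\log\#\{y\}\le v_dT/2+C$ is genuinely weaker than the lemma (for $\ell\approx T$ it is exponential in $T$ rather than linear), and in particular is useless for the application in the proof of Theorem~\ref{thm.word}. Your accounting of the $(\ell+C)$-factor is also inverted: if the map $g\mapsto y$ were $(\ell+C)$-to-one on normalized $g$'s, the factor would \emph{divide} the count of $g$'s, not multiply it.

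The paper avoids bounding $d(o,g)$ altogether. Writing $y=a^{-1}x_0a$ and using the displacement formula $|d(a,x_0a)-2d(a,\hat\gamma)-\ell|\le C_1$ (item (3) in the proof, where $\hat\gamma$ is the $x_0$-invariant rough axis), one gets $d(a,\hat\gamma)\le\tfrac12(T-\ell)+O(1)$ directly---no normalization needed for this step. \emph{Then} one normalizes $a$ modulo $\langle x_0\rangle$ so that it lies within $\tfrac12(T-\ell)+O(1)$ of some point $u$ in a single fundamental domain $\gamma$ (a rough geodesic from $o$ to $x_0$, with $\#\gamma\le\ell+O(1)$), and bounds $\#\{y\}$ by $\sum_{u\in\gamma}\#\{a:d(a,u)\le\tfrac12(T-\ell)+O(1)\}$, invoking Coornaert's estimate $\#\{a:d(a,u)\le R\}\le e^{v_dR+C}$. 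So the $\log(\ell+C)$ term arises as the number of ball-centers $u\in\gamma$, and the exponent $\tfrac{v_d(T-\ell)}{2}$ as the common ball radius---not as a fiber count.
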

\begin{proof}
   Suppose that $d$ is $\al$-roughly geodesic. We will use the following facts, which are well-known when $d$ is induced by a geometric action on a hyperbolic geodesic space, and whose proofs can be adapted to the roughly geodesic setting. There exist constants $C_0,C_1,C_2,\al_0 \geq 0$ such that for any $[x]\in \conj$:
\begin{enumerate}
    \item there exists $x_0\in [x]$ such that $|d(o,x_0)-\ell_d[x]|\leq C_0$ \cite[Lem.~3.2]{cantrell.tanaka.1};
    \item if $x_0\in [x]$ is as in item (1) and $\gam \subset \G$ is an $\al$-rough geodesic joining $o$ and $x_0$, then $\widehat\gam=\bigcup_{k\in \Z}{x_0^k \gam}$ is an $x_0$-invariant $\al_0$-rough geodesic (note that $\# \gam \leq \ell_d[x]+\al+C_0$);
    \item if $x_0\in [x]$, $\gam$ and $\widehat\gam$ are as in items (1) and (2), then for any $a\in \G$ we have $|d(x_0a,a)-2d(a,\widehat\gam)-\ell_d[x]|\leq C_1$ (cf.~\cite[Lem.~5.10]{b.f}); and,
    \item for all $R>0$ and $u\in \G$ we have
    $\log(\# \{y\in \G : d(u,y) \leq R\})\leq v_dR+C_2$ 
 \cite[Cor.~6.8]{coornaert}.
\end{enumerate}
 Now we start the proof of the lemma, so fix $[x]\in \conj$ and $T>0$ and set $\ell=\ell_d[x]$. Let $x_0\in [x]$, $\gam$ and $\widehat\gam$ satisfy items (1)-(3) above. If $y=a^{-1}x_0a\in [x]$ is such that $d(o,y)=d(x_0a,a)\leq T$, then $d(a,\widehat\gam)\leq \frac{C_1}{2}+\frac{(T-\ell)}{2}$. After replacing $a$ by $x_0^ka$ for some $k$, we can assume that $d(a,u)\leq \frac{C_1}{2}+\frac{(T-\ell)}{2}$ for some $u\in \gam$. By item (4) we conclude
 \begin{align*}
     \log (\# \{y\in [x] : d(o,y)\leq T \}) &\leq \log \left(\# \left\{a\in \G : d(a,\gam)\leq \frac{C_1}{2}+\frac{(T-\ell)}{2}\right\}\right)\\
     & \leq \log \left(\sum_{u\in \gam}{\# \left\{a\in \G : d(a,u)\leq \frac{C_1}{2}+\frac{(T-\ell)}{2} \right\} }\right) \\
     & \leq \log(\# \gam)+v_d\left(\frac{C_1}{2}+\frac{(T-\ell)}{2} \right)+C_2 \\
     & \leq \log(\ell+\al+C_0)+v_d\left(\frac{C_1}{2}+\frac{(T-\ell)}{2} \right)+C_2. \qedhere
     \end{align*}
\end{proof}

\begin{proof}[Proof of Theorem \ref{thm.word}]
Without loss of generality assume that $d_S, d_{S_{\ast}}$ are not roughly similar. By Lemma 3.8 and Example 3.9 in \cite{calegari.fujiwara} we can find a Cannon coding for $(\G, S)$ with corresponding shift space $(\Sigma,\sigma)$ and a H\"older continuous function $r_\ast: \Sigma \to \mathbb{R}$ satisfying the following: if $z = (z_1, z_2,\ldots, z_n,z_1, \ldots)\in \Sigma$ is a periodic orbit of period $n$, then the $n$th Birkhoff sum of $r_\ast$ on $z$ outputs the value $\ell_{S_\ast}[x]$, where $x\in \G$ is the group element obtained by multiplying the labelings in the finite path $z_1,z_2, \ldots, z_n$. By construction, $r_\ast$ is constant on $2$ cylinders and takes values in $\mathbb{Z}$. Note that by the assumption that $v_{S_\ast}/v_S$ is rational, the function $v_{S_\ast} r_\ast/v_S - 1$ takes values in $(1/p) \mathbb{Z}$ for some $p\in \mathbb{Z}$. Define $\overline{r} = p(v_{S_\ast} r_\ast/v_S - 1)$ so that $\overline{r}$ takes values in $\mathbb{Z}$. 

We now fix a word maximal component $\mathcal{C}$ in the coding. Our aim is to apply a result of Pollicott and Sharp \cite[Thm.~1]{pollicott.sharp} to the pair $(\Sigma_\mathcal{C}, \overline{r})$. To do so, we need to verify two conditions:
\begin{enumerate}
    \item the function $\overline{r}$ is not cohomologous to a constant function; and,
    \item there exists a fully supported, shift-invariant probability measure $\mu$ on $\Sigma_\mathcal{C}$ such that 
    \[
    \int \overline{r} \ d\mu = 0.
    \]
\end{enumerate}
Note that $(1)$ is slightly different from the condition written in \cite{pollicott.sharp}. However, from inspecting the proof it is sufficient. For the first point: if $\overline{r}$ is cohomologous to a constant function then there exists $\tau \in \mathbb{R}$ such that $\ell_{S_*}[x] = \tau\ell_S[x]$ for all $[x]\in\conj$ by \cite[Cor.~3.6]{cantrell} and $d_S, d_{S_\ast}$ are roughly similar, which we are assuming not to be the case. The second point requires a little more work. By \cite[Lem.~3.3]{cantrell}, the Manhattan curve $\theta_{S_\ast/S}(t)$ for the pair $d_S, d_{S_\ast}$ is given by the pressure function $t \mapsto \text{P}_\mathcal{C}(-t r_\ast)$. We note that \cite[Lem.~3.3]{cantrell} is written for a strongly hyperbolic metric $d$ and word metric $d_S$, but the same argument works for our pair of word metrics $d_S, d_{S_*}$. Further, since $\theta_{S_\ast/S}'(0) \le -v_S/v_{S_\ast}$ and $\theta_{S_\ast/S}'(v_{S}) \ge -v_S/v_{S_\ast}$, we see that there must be a point $\xi \in (0,v_{S})$ such that $\theta_{S_\ast/S}'(\xi) = - v_S/v_{S_\ast}$. It follows from standard facts from thermodynamic formalism that 
\[
\int_{\Sigma_\mathcal{C}} -r_\ast \ d\mu_{-\xi r_\ast} = 
\frac{d\text{P}_\mathcal{C}(-tr_\ast)}{dt}\Big|_{t=\xi} =  \theta_{S_\ast/S}'(\xi)  =  -v_S/v_{S_\ast}
\]
where $\mu_{-\xi r_\ast}$ is the equilibrium state for $-\xi r_\ast$. Up to an algebraic manipulation, this implies point $(2).$

We can now apply \cite[Thm.~1]{pollicott.sharp} to deduce that there exist $m\in\mathbb{Z}_{\ge 0}$, $C>0$ and $\beta' >0$ such that
\[
\# \left\{z\in \Sigma \colon \sigma^{mn}(z)  = z, \overline{r}^{mn}(z) = 0 \right\} \ge C {e^{\beta' mn}}{(mn)^{-3/2}} \ \text{ for each $n\ge 1$.}
\]
We note that Pollicott and Sharps's result is written for mixing subshifts of finite type and our subshift $\Sigma_\mathcal{C}$ may only be transitive. This is not an issue for us, since we can simply apply the result to $(\Sigma_\mathcal{C}, \sigma^{p})$ where $p$ is the period of the component.
By inspecting the proof in \cite{pollicott.sharp} and using \cite[Lem.~3.3]{cantrell} we see that $\beta'$ is precisely the value $\alpha(d_S, d_{S_\ast})$ introduced in Section \ref{sec.lsos} (see \cite{sharp.manhat} for more details).
Now note that each periodic orbit in $\Sigma_\mathcal{C}$ has a corresponding conjugacy class in $\G$. Furthermore, by Lemma \ref{lem.count}, there exists $C>0$ (independent of $T$) such that each conjugacy class of $\ell_S$ length $T$ has at most $C T+C$ corresponding periodic orbits in $\Sigma_\mathcal{C}$. We deduce that there exists $\widehat{C} >0$ and $m \in \mathbb{Z}_{\ge 0}$ such that
\[
\#\left\{ [x] \in \conj: \ell_S[x] = mn , v_S\ell_{S}[x] = v_{S_\ast}\ell_{S_\ast}[x]\right\} \ge \widehat{C} e^{mn\alpha(d_S,d_{S_\ast})} (mn)^{-5/2}
\]
for all $n\geq 0$. By Lemma \ref{lem.sym} this provides the required lower bound and concludes the proof.
\end{proof}
\begin{remark}
    It seems sensible to conjecture that this result holds for all hyperbolic groups and pairs of finite generating sets $S, S_\ast$ if we relax the condition that $v_S\ell_S[x] = v_{S_\ast}\ell_{S_\ast}[x]$ to $|v_S\ell_S[x] - v_{S_\ast}\ell_{S_\ast}[x]| \le \epsilon$ for some fixed $\epsilon>0$. This problem is related to large deviations with shrinking intervals. We note it is possible to prove this for pairs of \textit{strongly hyperbolic} metrics with an independence assumption by following the proof of \cite[Thm.~1.10]{cantrell}.
\end{remark}
We immediately obtain the following corollary. Given a finite, symmetric generating set $S\subset \G$ and automorphisms $\phi,\psi \in \Aut(\G)$, we will write $\beta_S(\phi,\varphi) := \beta(d_S^\phi, d_S^\varphi)$ where $d_S^\phi(x,y) = |\phi(x)^{-1}\phi(y)|_S$ and $d_S^\varphi(x,y) = |\varphi(x)^{-1}\varphi(y)|_S$.
Note that $d_S^\phi=d_{\phi^{-1}(S)}$ and $v_{\phi^{-1}(S)}=v_S$ for all $S$ and $\phi$.
\begin{corollary}\label{coro.betaauto}
      Let $\Gamma$ be a non-elementary hyperbolic group equipped with a finite and symmetric generating set $S$.
    For any two automorphisms $\phi,\varphi \in \textnormal{Aut}(\G)$ we have
    \[
    \limsup_{T\to\infty} \frac{1}{T} \log( \#\left\{ [x] \in \conj: \ell_S[\phi(x)] \leq T, \ell_S[\phi(x)]= \ell_{S}[\varphi(x)]\right\}) = v_S - \beta_S(\phi,\varphi).
    \]
\end{corollary}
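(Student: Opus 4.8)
The plan is to derive Corollary~\ref{coro.betaauto} as a direct specialization of Theorem~\ref{thm.word}. The key observation is that precomposing the word metric $d_S$ by an automorphism $\phi \in \Aut(\G)$ yields the word metric associated to a \emph{different} finite symmetric generating set, namely $d_S^\phi = d_{\phi^{-1}(S)}$, as noted just before the statement. Since $\phi^{-1}$ is an automorphism it sends the finite symmetric generating set $S$ to the finite symmetric generating set $\phi^{-1}(S)$, and it induces a bijection of $\G$ that intertwines the two word-length functions: $|\phi(x)|_S = |x|_{\phi^{-1}(S)}$. Consequently $v_{\phi^{-1}(S)} = v_S$ for every $\phi$, and likewise $v_{\varphi^{-1}(S)} = v_S$. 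In particular the ratio $v_{\phi^{-1}(S)}/v_{\varphi^{-1}(S)} = v_S/v_S = 1$ is rational, so the rationality hypothesis of Theorem~\ref{thm.word} is automatically satisfied for the pair of generating sets $S_1 := \phi^{-1}(S)$ and $S_2 := \varphi^{-1}(S)$.

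Next I would translate the quantities appearing in Corollary~\ref{coro.betaauto} into those of Theorem~\ref{thm.word} applied to $(S_1, S_2)$. Since $\phi$ induces a bijection on $\G$ which descends to a bijection on $\conj$, the conjugacy class $[x]$ corresponds to $[\phi(x)]$, and $\ell_{S_1}[\phi(x)] = \ell_{d_S^\phi}[\phi(x)]$... more precisely, one checks $\ell_{S_1}[x] = \ell_S[\phi(x)]$ directly from $\ell_{d_{\phi^{-1}(S)}}[x] = \lim_n \frac{1}{n}|\phi(x^n)|_S = \ell_S[\phi(x)]$, and similarly $\ell_{S_2}[x] = \ell_S[\varphi(x)]$. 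Therefore, reindexing the counting set of Theorem~\ref{thm.word} by the bijection $[x] \mapsto [\phi(x)]$ (equivalently, directly counting conjugacy classes $[x]$ with $\ell_{S_1}[x] \le T$ and $v_{S_1}\ell_{S_1}[x] = v_{S_2}\ell_{S_2}[x]$), and using $v_{S_1} = v_{S_2} = v_S$ so that the condition $v_{S_1}\ell_{S_1}[x] = v_{S_2}\ell_{S_2}[x]$ simplifies to $\ell_S[\phi(x)] = \ell_S[\varphi(x)]$, we obtain
\[
\limsup_{T\to\infty} \frac{1}{T}\log\big(\#\{[x]\in\conj : \ell_S[\phi(x)] \le T,\ \ell_S[\phi(x)] = \ell_S[\varphi(x)]\}\big) = v_{S_1} - \beta(d_{S_1}, d_{S_2}).
\]
Finally, identifying $d_{S_1} = d_{\phi^{-1}(S)} = d_S^\phi$ and $d_{S_2} = d_{\varphi^{-1}(S)} = d_S^\varphi$, the right-hand side is $v_S - \beta(d_S^\phi, d_S^\varphi) = v_S - \beta_S(\phi,\varphi)$ by the definition of $\beta_S$, which is exactly the claimed formula.

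The proof is essentially bookkeeping, so I do not anticipate a serious obstacle; the only point requiring a moment's care is verifying that all the hypotheses of Theorem~\ref{thm.word} genuinely transfer, in particular that $\phi^{-1}(S)$ and $\varphi^{-1}(S)$ are again finite symmetric generating sets (immediate, since automorphisms preserve finiteness, symmetry, and the property of generating) and that the rationality condition holds (immediate from $v_{\phi^{-1}(S)} = v_S$, which itself follows from the length-preserving bijection induced by $\phi$). One should also note that if $d_S^\phi$ and $d_S^\varphi$ happen to be roughly similar then $\beta_S(\phi,\varphi) = 0$ by Lemma~\ref{lem.eqality}, and the formula still holds trivially since in that case the counting set contains a definite proportion of all conjugacy classes (those with $\ell_S[\phi(x)] = \ell_S[\varphi(x)]$, which is comparable to all of $\conj$ up to the rough-similarity constant), giving growth rate $v_S = v_S - 0$; alternatively one simply invokes Theorem~\ref{thm.word} which already covers this degenerate case. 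Thus the corollary follows immediately.
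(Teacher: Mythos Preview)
Your proposal is correct and follows exactly the approach the paper intends: the corollary is stated immediately after Theorem~\ref{thm.word} with the remark that $d_S^\phi=d_{\phi^{-1}(S)}$ and $v_{\phi^{-1}(S)}=v_S$, and you have spelled out precisely why those two facts make the corollary an instance of the theorem. Two minor remarks: the reindexing by $[x]\mapsto[\phi(x)]$ is unnecessary since, once you have $\ell_{S_1}[x]=\ell_S[\phi(x)]$ and $\ell_{S_2}[x]=\ell_S[\varphi(x)]$, the counting sets are literally identical; and the separate treatment of the roughly-similar case is also unnecessary since Theorem~\ref{thm.word} is stated without that exclusion (its proof begins ``without loss of generality'' for exactly this reason).
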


\begin{remark}
   This corollary complements the results in \cite{car} and \cite{kap} for the case of free groups by exhibiting the largest non-spectrally rigid subset for pairs of automorphisms.
\end{remark}


\section{Marked length spectrum domination}\label{sec.mlsdom}
In this section we prove Theorems \ref{thm.main2} and \ref{thm.domination}. For both results, the key ingredient is the following lemma, which will follow by adapting the proof of \cite[Thm.~4.23]{cantrell.tanaka.1}.
\begin{lemma}\label{lem.ineq}
    Let $d\in \calD_\G$ and $E\subset \conj$ be  such that $v_d(E)=v_d$. Suppose $d_\ast\in \calD_\G$ is such that there exist $\eta_2\geq \eta_1>0$ and a function $f : \mathbb{R} \to \mathbb{R}_{\geq 0}$ satisfying $f(t) = o(t)$ as $t\to \infty$ and 
    $$\eta_1\ell_d[x]-f(\ell_d[x])\leq \ell_{d_\ast}[x] \le \eta_2\ell_d[x]+f(\ell_d[x])$$ for all $[x] \in E$. Then we have $$\eta_1 \leq \tau(d_\ast/d)\leq \eta_2.$$
\end{lemma}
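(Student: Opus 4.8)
The plan is to read off both inequalities from a Chernoff‑type upper bound on an exponential growth rate, combined with the identity $\tau(d_\ast/d)=-\theta_{d_\ast/d}'(0)$ recorded right after Theorem~\ref{thm.manprops}; this is precisely the upper‑tail half of the large deviation estimate of \cite[Thm.~4.23]{cantrell.tanaka.1}, which is all that is needed here. As a harmless preliminary, replacing $f$ by $t\mapsto\sup_{s\le t}f(s)$ (still $o(t)$ since $f\ge0$) we may assume $f$ is non‑decreasing.

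First I would pass from conjugacy classes to group elements. By \cite[Lem.~3.2]{cantrell.tanaka.1} there is $C'=C'(d,d_\ast)\ge0$ such that every $[x]\in\conj$ has a representative $x_0\in[x]$ with $|d(o,x_0)-\ell_d[x]|\le C'$ and $|d_\ast(o,x_0)-\ell_{d_\ast}[x]|\le C'$. Choosing one such representative per class gives an injection, so from $\eta_1\ell_d[x]-f(\ell_d[x])\le\ell_{d_\ast}[x]\le\eta_2\ell_d[x]+f(\ell_d[x])$ on $E$ one gets, for all $T>0$ and $i=1,2$, $\#\{[x]\in E:\ell_d[x]\le T\}\le N_i(T+C')$, where
\[
N_2(S)=\#\{y\in\G:\ d(o,y)\le S,\ \ d_\ast(o,y)\le\eta_2 d(o,y)+g(d(o,y))\},
\]
\[
N_1(S)=\#\{y\in\G:\ d(o,y)\le S,\ \ d_\ast(o,y)\ge\eta_1 d(o,y)-g(d(o,y))\},
\]
and $g(t):=f(t+C')+(\eta_2+1)C'$ is non‑decreasing with $g(t)=o(t)$ (this absorbs the two bounded comparisons, using $\eta_1\le\eta_2$ for $N_1$). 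Consequently $v_d=v_d(E)\le\limsup_{S\to\infty}\tfrac1S\log N_i(S)$ for $i=1,2$.

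Now comes the Chernoff step, using the defining property of the Manhattan curve: $\sum_{y\in\G}e^{-ad_\ast(o,y)-bd(o,y)}<\infty$ whenever $b>\theta_{d_\ast/d}(a)$. For $y$ counted by $N_2(S)$ we have $d(o,y)\le S$ and $d_\ast(o,y)\le\eta_2 d(o,y)+g(S)$, so for $a,b\ge0$ each corresponding term satisfies $e^{-ad_\ast(o,y)-bd(o,y)}\ge e^{-(a\eta_2+b)S-ag(S)}$; summing and letting $b\downarrow\theta_{d_\ast/d}(a)$, and using $g(S)=o(S)$, yields $\limsup_{S\to\infty}\tfrac1S\log N_2(S)\le a\eta_2+\theta_{d_\ast/d}(a)$ for all sufficiently small $a\ge0$. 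By Theorem~\ref{thm.manprops} the right‑hand side is a $C^1$ function of $a$ equal to $v_d$ at $a=0$ with derivative $\eta_2-\tau(d_\ast/d)$ there; if $\tau(d_\ast/d)>\eta_2$ this derivative is negative, so a small $a>0$ gives $v_d\le a\eta_2+\theta_{d_\ast/d}(a)<v_d$, a contradiction, whence $\tau(d_\ast/d)\le\eta_2$. The lower bound is symmetric: rewriting the constraint on $N_1(S)$ as $\eta_1 d(o,y)-d_\ast(o,y)\le g(d(o,y))$, for $c,b\ge0$ with $b>\theta_{d_\ast/d}(-c)-c\eta_1$ the series $\sum_{y\in\G}e^{cd_\ast(o,y)-(b+c\eta_1)d(o,y)}$ converges, and bounding each of its terms coming from an element counted by $N_1(S)$ below by $e^{-bS-cg(S)}$ gives $\limsup_{S\to\infty}\tfrac1S\log N_1(S)\le\theta_{d_\ast/d}(-c)-c\eta_1$; this function equals $v_d$ at $c=0$ with derivative $\tau(d_\ast/d)-\eta_1$, so $\tau(d_\ast/d)<\eta_1$ would again contradict $v_d(E)=v_d$. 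Together these prove $\eta_1\le\tau(d_\ast/d)\le\eta_2$.

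The substantive content is small. The main bookkeeping obstacle is choosing the correct half‑line in each case — $a\ge0$ for the upper bound on $\tau$, $-c\le0$ for the lower one — so that the Manhattan series invoked is the convergent one and every coefficient multiplying a constraint has the sign that turns the constraint into a lower bound on the summand; and one must check that the sublinear error, once packaged into a single non‑decreasing $g=o(t)$, contributes only an $e^{ag(S)}$ factor invisible to the exponential rate. The one analytic input, differentiability of $\theta_{d_\ast/d}$ at $0$ with slope $-\tau(d_\ast/d)$, is exactly Theorem~\ref{thm.manprops} together with the displayed formula for $\tau(d_\ast/d)$ that follows it; everything else parallels the proof of \cite[Thm.~4.23]{cantrell.tanaka.1}.
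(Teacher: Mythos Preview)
Your proof is correct and rests on the same large-deviation idea as the paper's, but it is more direct. The paper introduces the rate function $I(s)=\theta_{d_\ast/d}(0)+\sup_t\{ts-\theta_{d_\ast/d}(-t)\}$, invokes an adaptation of \cite[Thm.~4.23]{cantrell.tanaka.1} to conjugacy-class counting in annuli $\mathcal{C}(d,n,R)$ (details left to the reader), and then argues by contradiction via the fact that $I(s)>0$ for $s\ne\tau(d_\ast/d)$. You bypass the rate-function formalism entirely: after passing to group elements, you derive the needed exponential upper bound directly from the defining convergence of the Manhattan series and read off the contradiction from the first-order expansion of $a\mapsto a\eta_2+\theta_{d_\ast/d}(a)$ (respectively $c\mapsto\theta_{d_\ast/d}(-c)-c\eta_1$) at $0$. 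Your route is more self-contained and makes transparent that only the Chernoff upper bound and $C^1$ regularity of $\theta_{d_\ast/d}$ at the origin are needed; the paper's route, on the other hand, ties the result cleanly to the standard LDP picture and would generalise at once to more complicated constraints on $\ell_{d_\ast}/\ell_d$.
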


\begin{proof}
    Suppose $d,d_\ast \in \Dc_\G$, $E \subset \conj$ and $\eta_1,\eta_2$ satisfy the assumptions in the lemma. Given $R>0$ we write $\mathcal{C}(d,n,R)= \{[x] \in \conj: |\ell_d[x] - n| \le R \}$. Then for any fixed $R > 0$ sufficiently large we have
    \begin{equation}\label{eq.density}
        \limsup_{n \to\infty} \frac{1}{n} \log \left(\#(E \cap \mathcal{C}(d,n,R))\right) = v_d.
    \end{equation}
    
    Also, consider the function
    \[
    I(s) = \theta_{d_\ast/d}(0) + \sup_{t\in\mathbb{R}} \{ts - \theta_{d_\ast/d}(-t)\},
    \]
    which is finite and lower semi-continuous on the interval $(\Dil(d,d_\ast)^{-1},\Dil(d_\ast,d))$.
    It follows from Proposition \ref{thm.stricconvexvsstraight} that $I(s) \ge 0$ with equality only when $s = \tau(d_\ast/d)$.
    By the same proof used to show \cite[Thm.~4.23]{cantrell.tanaka.1} we can prove the following. For any fixed $R$ sufficiently large
    \begin{equation}\label{eq.ldp}
          \limsup_{n\to\infty} \frac{1}{n} \log\left(\#\left\{ [x] \in \mathcal{C}(d,n,R) : \frac{\ell_{d_\ast}[x]}{\ell_d[x]} \in V \right\} \right) \le v_d - \inf_{s \in V} I(s)
    \end{equation}
    for any closed set $V \subset \mathbb{R}$. The result \cite[Thm.~4.23]{cantrell.tanaka.1} is analogous to \eqref{eq.ldp} except the counting is done over group elements and for the displacement functions instead of counting over conjugacy classes for the translation length functions.  To prove the above result we simply need to combine Proposition 3.1, Corollary 2.8 and the proof of Theorem 4.23 in \cite{cantrell.tanaka.1}. We leave the details to the reader.
    
    Now assume that $\tau(d_\ast/d) \notin [\eta_1,\eta_2]$ and choose a closed interval $V= [\eta_1-\ep,\eta_2+\ep]$ with $\ep>0$ small enough so that $\tau(d_\ast/d) \nin V$. By the assumptions placed on $E$ we have that 
    \[
    \frac{\ell_{d_\ast}[x]}{\ell_d[x]} \in V \ \text{for all by finitely many $[x] \in E$. }
    \]
    It then follows from \eqref{eq.ldp} that the growth rate of $E$ with respect to $d$ is strictly less than $v_d$. This directly contradicts \eqref{eq.density} and so we conclude that $\tau(d_\ast/d) \in [\eta_1,\eta_2]$.
\end{proof}

We now move onto the proofs of Theorems \ref{thm.main2} and \ref{thm.domination}.

\begin{proof}[Proof of Theorem \ref{thm.main2}]
    Let $d,d_\ast\in \calD_\G$ and $E\subset \conj$ be as in the statement of the theorem. By Lemma \ref{lem.ineq} we deduce $\tau(d_\ast/d) = 1$. The same lemma implies $\tau(d/d_\ast) = 1$. Indeed, note that since $d,d_\ast$ are quasi-isometric, condition (3) in the theorem holds when $d$ and $d_\ast$ are switched.
    Therefore
    \[
    \theta_{d_\ast/d}'(0) = - \tau(d_\ast/d) = -1 = - \tau(d/d_\ast)^{-1} = \theta'_{d_\ast/d}(v_d) 
    \]
    and so by convexity, $\theta_{d_\ast/d}$ is a straight line on $(0,v_d)$.
    Hence by $(3)$ in Theorem \ref{thm.manprops} we see that $d,d_\ast$ are roughly isometric and the proof is complete.
\end{proof}

\begin{proof}[Proof of Theorem \ref{thm.domination}]
Take $d, d_\ast, E$ and $f$ as in the statement of the theorem. We would like to deduce that $\Dil(d_\ast,d) \le 1$.

    Given $a < 0$ there is a pseudo metric $d_a \in \Dc_\G$ that is within bounded distance of  $ad_\ast + \theta_{d_\ast/d}(a) d$ \cite[Prop.~4.1]{cantrell.reyes}. By assumption (2) we have that 
    \begin{equation}\label{eq.fix}
    \ell_{d_\ast}[x] \le \ell_d[x]+f(\ell_d[x]) \ \text{ on $E$.}      
    \end{equation}
    There are now two cases to consider:
    
   \noindent  \textit{Case 1: $\theta_{d_\ast/d}(a) + a \le 0$ for some $a<0$.}
By \cite[Rmk.~4.14]{cantrell.reyes} we also have that
\[
0<a \Dil(d_*,d)+\theta_{d_\ast/d}(a)
\]
and combining these two inequalities gives
\[
0 < a \Dil(d_*,d)+\theta_{d_\ast/d}(a) \le a \Dil(d_*,d) - a = a(\Dil(d_\ast,d) -1)
\]
implying that $\Dil(d_\ast,d) < 1$ as required.

\noindent  \textit{Case 2: $\theta_{d_\ast/d}(x) + a  > 0$ for all $a<0$.}
In this case expression $(\ref{eq.fix})$ implies that
    \[
\ell_{d_\ast}[x] \le \frac{\ell_{d_a}[x]}{a + \theta_{d_\ast/d}(a)}+f_a(\ell_{d_a}[x])
    \]
    for all $[x] \in E$ where $f_a(t)=o(t)$ as $t\to \infty$, and so $\tau(d_\ast/d_a) \le (a+\theta_{d_\ast/d}(a))^{-1}$ by Lemma \ref{lem.ineq} and assumption (2). 
 We now note that, by the definition of the Manhattan curve, the quantities $\theta_{d_\ast/d}(s)$ and $\theta_{d_\ast/d_a}(s)$ are related by the following equation
    \[
    \theta_{d_\ast/d}(s+\theta_{d_\ast/d_a}(s)) = \theta_{d_\ast/d_a}(s)\theta_{d_\ast/d}(a) \ \text{ for each $s \in \R$}.
    \]
    Differentiating this equation at $s=0$ gives the following expression
   \[
\theta_{d_\ast/d_a}'(0) =  -\left( a -  \frac{\theta_{d_\ast/d}(a)}{\theta_{d_\ast/d}'(a)}\right)^{-1},
   \] 
   and so
   \[
   \left( a -  \frac{\theta_{d_\ast/d}(a)}{\theta_{d_\ast/d}'(a)}\right)^{-1} =   -\theta_{d_\ast/d_a}'(0) = \tau(d_\ast/d_a) \le (a+\theta_{d_\ast/d}(a))^{-1}.
   \]
   This rearranges to give $\theta_{d_\ast/d}'(a) \ge -1$. Since this is true for all $a <0$, by \cite[Cor.~3.3]{cantrell.tanaka.1} we deduce that
   \[
   \Dil(d_\ast,d) = -\lim_{a\to-\infty} \theta_{d_\ast/d}'(a) \le 1,
   \]
    which concludes the proof of the second case and also the theorem.
\end{proof}


\section{Applications}\label{sec.applications}

In this section, we prove the results that appeared in the introduction.

\subsection{Length spectrum rigidity from subgroups/subsets}\label{subsec.coame}
In this subsection we prove Corollary \ref{cor.1}. As we mentioned in the introduction, a subgroup $H < \G$ is \emph{co-amenable} if the action of $\G$ on the left coset space $H \backslash \G$ admits a $\G$-invariant mean. For example, a normal subgroup $H\trianglelefteq \G$ is co-amenable if and only if the quotient group $\G/H$ is amenable. In particular, the commutator subgroup $[\G,\G]$ is co-amenable. 

In \cite[Thm.~1.1]{CDST}, Coulon, Dougall, Shapira and Tapie gave a characterization of co-amenability in terms of exponential growth rates. For $\G$ hyperbolic, it can be stated as follows: given any finite and symmetric generating set $S\subset \G$ with word metric $d=d_S$, a subgroup $H<\G$ is co-amenable if and only if
\begin{equation}\label{eq.coamenable}
v_d(H):=\limsup_{T\to\infty}{ \frac{1}{T} \log (\#\{x \in H: d(o,x) \leq T \})} = v_d.
\end{equation}

To prove our corollaries, we need to relate the exponential growth rate of the displacement for subsets of $\G$ and the exponential growth rate for the translation length for the corresponding subset of conjugacy classes. That relation is given by the following proposition.

\begin{proposition}\label{prop.samegrowthrate}
Let $d\in \calD_\G$ and let $A\subset \G$ be a set that is closed under taking conjugates. If $[A]\subset \conj$ denotes the set of conjugacy classes of elements in $A$, then
\begin{equation}\label{eq.growrate}
\limsup_{T\to\infty}{ \frac{1}{T} \log( \#\{y \in A: d(o,y) \leq T \})}=v_d([A]).
\end{equation}
\end{proposition}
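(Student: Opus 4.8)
The plan is to prove the two inequalities separately. The inequality $\limsup_{T\to\infty}\frac1T\log(\#\{y\in A: d(o,y)\le T\}) \ge v_d([A])$ is the easy direction: given a conjugacy class $[x]\in[A]$ with $\ell_d[x]\le T$, Lemma~3.2 of \cite{cantrell.tanaka.1} (used already in the proof of Theorem~\ref{thm.main}) provides a representative $y\in[x]$ with $d(o,y)\le \ell_d[x]+C'\le T+C'$ for a uniform constant $C'$, and since $A$ is closed under conjugation we have $y\in A$. Distinct conjugacy classes give distinct such representatives, so $\#\{y\in A: d(o,y)\le T+C'\} \ge \#\{[x]\in[A]: \ell_d[x]\le T\}$, and taking $\limsup$ over $T$ yields the claimed inequality.

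For the reverse inequality $\limsup_{T\to\infty}\frac1T\log(\#\{y\in A: d(o,y)\le T\}) \le v_d([A])$, the idea is to group the elements of $A$ with $d(o,y)\le T$ by their conjugacy class and bound the number of elements in each class of a given translation length. First, every $y\in A$ with $d(o,y)\le T$ satisfies $\ell_d[y]\le d(o,y)\le T$, so its class lies in $\{[x]\in[A]: \ell_d[x]\le T\}$. For a fixed class $[x]$ with $\ell=\ell_d[x]$, Lemma~\ref{lem.count} gives
\[
\#\{y\in[x]: d(o,y)\le T\} \le (\ell+C)\,e^{\frac{v_d(T-\ell)}{2}+C}.
\]
Summing over classes and splitting according to the value of $\ell$, one gets
\[
\#\{y\in A: d(o,y)\le T\} \;\le\; \sum_{[x]\in[A],\ \ell_d[x]\le T} (\ell_d[x]+C)\,e^{\frac{v_d(T-\ell_d[x])}{2}+C}.
\]
Now fix $\e>0$ and write $N(t)=\#\{[x]\in[A]: \ell_d[x]\le t\}$, so that $N(t)\le C_\e e^{(v_d([A])+\e)t}$ for all $t$. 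Estimating the sum above — either by a dyadic decomposition of the range of $\ell_d[x]$ into intervals, or by a Stieltjes integral $\int_0^T (t+C)e^{\frac{v_d(T-t)}{2}+C}\,dN(t)$ and integration by parts — the exponential rate of the right-hand side is governed by $\max_{0\le t\le T}\big((v_d([A])+\e)t + \tfrac{v_d}{2}(T-t)\big)$. Since $v_d([A])\le v_d$ and $v_d([A])+\e$ may or may not exceed $v_d/2$, this maximum is $\max\{(v_d([A])+\e), v_d/2\}\cdot T$ up to polynomial factors; but in fact one should be slightly more careful, as explained below.

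\textbf{The main obstacle.} The subtlety is that the crude bound just described only yields $\limsup \le \max\{v_d([A]), v_d/2\}$, which is not good enough when $v_d([A]) < v_d/2$. To get the sharp bound $v_d([A])$ one must exploit that the estimate in Lemma~\ref{lem.count} is wasteful for classes whose length $\ell$ is close to $T$: there the factor $e^{v_d(T-\ell)/2}$ is $O(1)$, and for such classes the count of conjugates is genuinely polynomial in $\ell$, hence subexponential. The dangerous classes are those with small $\ell$, but there are few of them — precisely $\le C_\e e^{(v_d([A])+\e)\ell}$ with $\ell$ small. The right way to organize the sum is therefore to observe that for the term indexed by $\ell$ the contribution is at most $e^{(v_d([A])+\e)\ell}\cdot (\ell+C)e^{\frac{v_d}{2}(T-\ell)+C}$ summed over $\ell\le T$; when $v_d([A])+\e \ge v_d/2$ the sum is dominated by $\ell\approx T$ giving rate $v_d([A])+\e$, and when $v_d([A])+\e < v_d/2$ one instead splits at a threshold $\ell_0=\ell_0(\e,T)$ chosen so that for $\ell\ge \ell_0$ the exponent $\frac{v_d}{2}(T-\ell)$ is already below $\e T$, handling the small-$\ell$ range by the trivial bound $N(\ell_0)$ times the maximal per-class count — and here one uses again that $v_d(E)$ for the full $E=[A]$ can only be recovered from classes of length comparable to $T$, as in the density statement \eqref{eq.density}. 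Letting $\e\to 0$ then gives $\le v_d([A])$. Combining the two inequalities proves \eqref{eq.growrate}. I expect this balancing of the two regimes to be the only genuinely delicate point; everything else is bookkeeping with the tools (Lemmas~\ref{lem.count} and the representative lemma) already in hand.
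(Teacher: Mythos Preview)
Your approach is the paper's: the easy direction uses the representative lemma (item~(1) in the proof of Lemma~\ref{lem.count}), and the hard direction groups $\{y\in A: d(o,y)\le T\}$ by conjugacy class, bounds each class via Lemma~\ref{lem.count}, and sums. The paper reaches the same sum and then asserts $e^{v_dT/2}\sum_{\ell=0}^T e^{\ell(v_d([A])+\ep - v_d/2)} \le \kappa_\ep e^{T(v_d([A])+\ep)}$ without further comment --- which is valid precisely when $v_d([A])+\ep \ge v_d/2$, i.e.\ in the regime where your ``crude bound'' already suffices.

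The obstacle you flagged in the regime $v_d([A]) < v_d/2$ is real, and your proposed threshold splitting cannot resolve it --- because the proposition as stated is \emph{false} there. Take $A$ to be a single non-torsion conjugacy class $[x_0]$: then $v_d([A]) = 0$, yet by item~(3) in the proof of Lemma~\ref{lem.count} (which gives $d(o,a^{-1}x_0a) \approx 2d(a,\hat\gamma) + \ell_d[x_0]$) the number of conjugates in the $T$-ball grows at rate exactly $v_d/2$. Your splitting fails for the simple reason that a single small-$\ell$ class already contributes on the order of $e^{v_d T/2}$ elements, which no bound on $N(\ell_0)$ can absorb. The correct general identity (when $[A]$ contains a non-torsion class) is $v_d(A) = \max\{v_d([A]),\, v_d/2\}$. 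Fortunately every application in the paper either has $v_d([A]) = v_d$ (Corollaries~\ref{cor.1} and~\ref{cor.2}) or only needs the conclusion $v_d(A) < v_d$ (Corollary~\ref{cor.dfw}), and both follow from the corrected version; so your argument --- and the paper's --- is complete in the regime that matters.
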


\begin{proof}
Let $v_d(A)$ be the limit on the left-hand side of \eqref{eq.growrate}. Since $A$ is invariant under conjugacy, by item (1) in the proof of Lemma \ref{lem.count} we deduce $v_d([A])\leq v_d(A)$.

For the reverse inequality, we consider $\ep>0$, for which there is some $C_\ep>1$ such that 
$$\# \{ [x]\in [A]: \ell_d[x]< \ell+1\}\leq C_\ep e^{\ell(v_d([A])+\ep)}$$
for all $\ell\geq 0$. Also, by Lemma \ref{lem.count} there is a constant $C\geq 0$ such that for any integer $T>0$ we have
\begin{align*}
    \# \{y\in A : d(o,y)< T \} & = \sum_{\ell=0}^{T}{\left( \sum_{[x]\in [A]: \ell\leq \ell_d[x]<\ell+1}{\# \{y\in [x]: d(o,y) < T\} }\right) }\\ 
    & \leq e^C\cdot (T+C)\cdot \left(\sum_{\ell=0}^{T} {\# \{[x]\in [A]: \ell_d[x]<\ell+1 \}\cdot e^{v_d\left(\frac{T-\ell}{2} \right)}} \right).
\end{align*}
In addition, for any integer $T>0$ we have 
\begin{align*}
    \sum_{\ell=0}^{T} {\# \{[x]\in [A]: \ell_d[x]< \ell+1 \}\cdot e^{v_d\left(\frac{T-\ell}{2} \right)}} & \leq C_\ep\cdot e^{v_dT/2}\cdot \sum_{\ell=0}^{T}{e^{\ell(v_d([A])+\ep-v_d/2)}} \\
    & \leq \k_\ep e^{T(v_d([A])+\ep)}
\end{align*}
for some $\k_\ep>1$ depending only on $\ep$, $v_d$ and $v_d([A])$. Combining these two inequalities we conclude $v_d(A)\leq v_d([A])+\ep$ for all $\ep>0$, which implies the desired inequality.
\end{proof}

We now prove Corollary \ref{cor.1}.

\begin{proof} [Proof of Corollary \ref{cor.1}]
Let $H<\G$ be a co-amenable subgroup and let $E=\{[x]\in \conj:x\in H\}$.
If $d\in \calD_\G$ is a word metric for a finite generating set with exponential growth rate $v_d$, from \eqref{eq.coamenable} we have that $v_d(H)=v_d$. Also, by \cite[Lemma 5.1]{or} we know that metric structures induced by word metrics are dense in $(\scrD_\G,\Del)$. Since the function $[d]\to v_d(H)/v_d$ is well-defined and continuous on $(\scrD_\G,\Del)$, we deduce that equation \eqref{eq.coamenable} is valid for \emph{all} $d\in \calD_\G$. This observation combined with Proposition \ref{prop.samegrowthrate} tells us that $v_d(E)=v_d(H)=v_d$ for any $d\in \calD_\G$. Finally, if $d,d_\ast\in \calD_\G$ satisfy
$$d_\ast(o,x)\leq d(o,x)+f(d(o,x))$$
for all $x\in H$ and $f:\R \ra \R_{\geq 0}$ such that $f(t)=o(t)$ as $t \to \infty$, then we have $$\ell_{d_\ast}[x]\leq \ell_d[x]$$ for all $[x]\in E$. The conclusion then follows by Theorems \ref{thm.main2} and \ref{thm.domination}.
\end{proof}


\begin{remark}
Incidentally, from the proof of Corollary \ref{cor.1} we deduce that if $\G$ is non-elementary hyperbolic and $\al$ is a homology class in $\G$ with set of conjugacy classes $[\al]$, then for any $d\in \calD_\G$ with exponential growth rate $v_d$ we have
$$v_d([\al]) =  v_d.$$
This partially extends a result of Adachi and Sunada \cite[Thm.~1]{adachi.sunada}, where they proved it when $\G$ is the fundamental group of a compact manifold and $d$ is induced by a Riemannian metric with Anosov geodesic flow.
\end{remark}

The results obtained so far can be used to compare pseudo metrics induced by automorphisms of $\G$. Given two automorphisms $\phi, \psi \in \text{Aut}(\G)$ of $\G$ and $d\in \calD_\G$, it is natural to ask when $\phi$ and $\psi$ displace the elements of $\G$ in a similar way. One way to parse this is to ask whether the pseudo metrics $d^\psi(x,y) = d(\psi(x),\psi(y))$ and $d^\phi(x,y) = d(\phi(x),\phi(y))$ are roughly isometric (i.e. they have the same marked length spectrum). There has been much interest in these sorts of problems in the setting of Culler-Vogtmann Outer Space, see \cite{car}, \cite{kap}, \cite{ray}. Our results apply to this setting and provide various length spectrum rigidity results. For example, we can deduce the following.
Let $\phi, \psi$ be two automorphisms 
of the free group on $n$ generators $F_n$. Equip $F_n$ with a finite generating set $S$ and let $H = [F_n, F_n]$ be the commutator subgroup. For $T \ge 1$ let $H_T$ be the collection of elements in $H$ of $S$ translation length at most $T$, i.e. $H_T = \{ x \in H:  \ell_S[x] \le T\}$. If $d_{\phi^{-1}(S)}$ and $d_{\psi^{-1}(S)}$ are not roughly isometric, then the supremum $\sup_{x \in H_T}|\ell_S[\phi(x)]-\ell_S[\psi(x)]|$ must grows at least linearly as $T\to\infty$.


\subsection{Volume rigidity}\label{subsec.area}

We now prove our results regarding volume rigidity. We start with the proof of Corollary \ref{cor.area}.
\begin{proof}[Proof of Corollary \ref{cor.area}]
    By \cite[Thm.~1.1]{croke.dairbekov}, if $\ell_{\mathfrak{g}_\ast}[x] \le \ell_{\mathfrak{g}}[x]$ for all conjugacy classes $[x]$ in $\pi_1(S)$, then $\text{Area}(S,\mathfrak{g}_\ast) \le \text{Area}(S,\mathfrak{g})$. By Corollary \ref{cor.1}, if the inequality $\ell_{\mathfrak{g}_\ast}[x] \le \ell_{\mathfrak{g}}[x]$ holds on a co-amenable subgroup (resp.~homology class) then it holds everywhere and we are done.
\end{proof}

Similar results can be deduced in higher dimensions. For example, the same argument as above combined with the volume inequality of Guillarmou and Lefeuvre \cite[Thm.~2]{guilef} gives Theorem \ref{thm.volgl}.

Outside the manifold setting, we can recover area inequalities in the context of Fuchsian buildings \cite{bou}. These are a class of polygonal 2-complexes supporting locally $\CAT(-1)$ metrics. When $X$ is the quotient of a Fuchsian building by a subgroup of the combinatorial automorphism group, we let $\mathcal{M}_{neg}(X)$ consists of all the (locally) negatively curved, piecewise Riemannian metrics on $X$. For these quotients, we can promote a result of Constantine and Lafont \cite[Thm.~1.3]{conlaf} and deduce the following.

\begin{corollary}\label{cor.areabuilding}
    Let $X$ be the quotient of a Fuchsian building
    by a subgroup of the combinatorial automorphism group, which we assume acts freely and cocompactly. Let $\gm,\gm_\ast\in \mathcal{M}_{neg}(X)$, and suppose that $\ell_{\mathfrak{g}_\ast}[x] \le \ell_{\mathfrak{g}}[x]$ for each conjugacy class $[x]$ represented by an element in a fixed co-amenable subgroup (resp. homology class) of $\pi_1(X)$. Then 
    \[
    \textnormal{Area}(X,\mathfrak{g}_\ast) \le \textnormal{Area}(X,\mathfrak{g}).
    \]
\end{corollary}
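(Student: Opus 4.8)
The plan is to follow the proof of Corollary \ref{cor.area} essentially verbatim, substituting the surface input of Croke--Dairbekov by the Fuchsian building input of Constantine--Lafont \cite[Thm.~1.3]{conlaf}, once the coarse-geometric framework of the present paper has been shown to apply to $\G := \pi_1(X)$.

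First I would verify that $\G$ is a non-elementary hyperbolic group and that every $\gm \in \mathcal{M}_{neg}(X)$ induces a pseudo metric in $\Dc_\G$. Since $X$ is compact, a (locally) negatively curved piecewise Riemannian metric on the $2$-complex $X$ is locally $\CAT(-\kappa)$ for a uniform $\kappa>0$: the upper curvature bound is attained on the finitely many closed cells, and the link condition is part of the definition of negative curvature for such complexes. Hence the universal cover $\widetilde X$, equipped with the $\G$-invariant lift $\widetilde\gm$, is a $\CAT(-\kappa)$ space on which $\G$ acts freely, cocompactly and by isometries; therefore $\G$ is hyperbolic, and it is non-elementary because a thick Fuchsian building is not quasi-isometric to a line. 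Fixing a basepoint $p\in\widetilde X$, the function $d_\gm(x,y):=d_{\widetilde\gm}(xp,yp)$ then defines an element of $\Dc_\G$ --- it is $\G$-invariant, Gromov hyperbolic (as $\CAT(-\kappa)$ spaces are), and quasi-isometric to a word metric by the Milnor--\v{S}varc lemma --- with $\ell_{d_\gm}=\ell_\gm$ equal to the marked length spectrum of $(X,\gm)$.

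Next I would upgrade the hypothesised domination to domination on all of $\conj$. Arguing exactly as in the proofs of Corollaries \ref{cor.1} and \ref{cor.2} --- using the Coulon--Dougall--Shapira--Tapie characterisation \eqref{eq.coamenable} of co-amenability, its extension from word metrics to all of $\calD_\G$ by density in $(\scrD_\G,\Del)$, and Proposition \ref{prop.samegrowthrate} --- the set $E\subset\conj$ of conjugacy classes meeting the fixed co-amenable subgroup (resp.\ the fixed homology class) satisfies $v_{\hat d}(E)=v_{\hat d}$ for every $\hat d\in\calD_\G$. Since the hypothesis gives $\ell_{\gm_\ast}[x]\le\ell_{\gm}[x]$ for all $[x]\in E$, Theorem \ref{thm.domination} (with $f\equiv 0$) yields $\ell_{\gm_\ast}[x]\le\ell_{\gm}[x]$ for all $[x]\in\conj$, and then \cite[Thm.~1.3]{conlaf} converts this into $\textnormal{Area}(X,\gm_\ast)\le\textnormal{Area}(X,\gm)$.

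The main obstacle I expect is the first step: making rigorous that every metric in $\mathcal{M}_{neg}(X)$ yields a genuinely \emph{Gromov hyperbolic} pseudo metric in $\Dc_\G$, which rests on upgrading pointwise negative curvature to a uniform upper curvature bound $-\kappa<0$ via compactness of $X$, together with the global $\CAT(-\kappa)$ property of simply connected $2$-complexes with such cell-wise bounds; these are standard facts but must be extracted carefully from the setup of \cite{conlaf} and \cite{bou}. A secondary point to confirm is that \cite[Thm.~1.3]{conlaf} is (or immediately implies) the \emph{one-sided} area comparison under marked length spectrum domination, in parallel with Croke--Dairbekov's \cite[Thm.~1.1]{croke.dairbekov}; if only the two-sided rigidity statement is available there, then for the two-sided form of the hypothesis one instead feeds it through Corollary \ref{cor.1} (resp.\ Corollary \ref{cor.2}) to conclude that $d_\gm$ and $d_{\gm_\ast}$ are roughly isometric, hence $\ell_{\gm_\ast}\equiv\ell_{\gm}$ on $\conj$, and then obtains equality (in particular, the inequality) of areas.
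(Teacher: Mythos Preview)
Your proposal is correct and follows essentially the same approach as the paper. The paper does not give an explicit proof of this corollary; it simply states that one can ``promote'' Constantine--Lafont \cite[Thm.~1.3]{conlaf} in the same way that Corollary~\ref{cor.area} promotes Croke--Dairbekov, so your proof is precisely the intended argument, just with the preliminary verifications (that $\G=\pi_1(X)$ is non-elementary hyperbolic and that each $\gm\in\mathcal{M}_{neg}(X)$ yields a pseudo metric in $\Dc_\G$) made explicit rather than left implicit.

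One minor remark: where you invoke Theorem~\ref{thm.domination} directly after establishing $v_{\hat d}(E)=v_{\hat d}$ for all $\hat d\in\calD_\G$, the paper's proof of Corollary~\ref{cor.area} instead cites Corollaries~\ref{cor.1} and~\ref{cor.2}. These are equivalent routes, since the proofs of those corollaries consist exactly of establishing the growth-rate condition and then applying Theorem~\ref{thm.domination}; you have simply unpacked one level. Your caution about whether \cite[Thm.~1.3]{conlaf} is stated as a one-sided area inequality is prudent, but the paper evidently takes this for granted.
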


Finally, we apply our results to the volumes of Hitchin representations. Let $S$ be a closed orientable surface with hyperbolic fundamental group $\G$ and fix $m\geq 2$. A representation of $\G$ into $\PSL_m(\R)$ is $m$-\emph{Fuchsian} if it is the composition of a Fuchsian representation of $\G$ into $\PSL_2(\R)$ and an irreducible representation of $\PSL_2(\R)$ into $\PSL_m(\R)$. A representation $\rho:\G \ra \PSL_m(\R)$ is \emph{Hitchin} if it can be continuously deformed to an $m$-Fuchsian representation. If $\rho$ is a Hitching representation of $\G$, then we can associate an (asymmetric) geodesic current $\omega_\rho$ on $\G$, called the \emph{Liouville current} of $\rho$. Among other properties, this current satisfies
$$i([x],\omega_\rho)=\log\left(\frac{\lam_1 (\rho(x))}{\lam_m(\rho(x))}\right),$$
where $\lam_1(A)$ (resp. $\lam_m(A)$) denotes the largest (resp. smallest) modulus of an eigenvalue of $A$, $[x]\in \conj$ is seen as the corresponding rational geodesic current on $\G$, and $i$ denotes Bonahon's intersection number for currents \cite{bonahon.currentsTeich}. In \cite{BCLS18}, the \emph{Liouville volume of} the Hitchin representation $\rho$ was defined as $$\textnormal{vol}_L(\rho)=i(\omega_\rho,\omega_\rho).$$  

We can use Corollary \ref{cor.1} to refine the volume rigidity for Liouville currents by Bridgeman, Canary, Labourie and Sambarino \cite[Thm.~1.4]{BCLS18}.

\begin{proof}[Proof of Corollary \ref{cor.areahitchin}]
    Let $\|\cdot\|$ be the Euclidean norm on $\PSL_m(\R)$. For a Hitchin representation $\rho:\G \ra \PSL_m(\R)$, consider the non-negative functions $\psi^\rho,\widehat{\psi}^\rho$ on $\G \times \G$ given by 
    $$\psi^\rho(x,y)=\log \|\rho(x^{-1}y)\|, \hspace{2mm}\widehat{\psi}^\rho(x,y)=\log \|\rho(y^{-1}x)\|.$$
    Since Hitchin representations are projective Anosov (i.e. 1-dominated), their contragradient representations are also projective Anosov. 
    It follows from \cite[Lem.~3.14]{cantrell.reyes} that $d_\rho:=\psi^\rho+\widehat{\psi}^\rho$ is in fact a metric $d_\rho \in \calD_\G$. Therefore, if $\rho,\rho_\ast$ are Hitching representations of $\G$ such that $\lam_1(\rho_\ast(x))\leq \lam_1(\rho(x))$ for all $x$ belonging to the co-amenable subgroup $H<\G$ and since $H$ is closed under taking inverses, we deduce $$\ell_{d_{\rho_\ast}}[x]\leq \ell_{d_\rho}[x]$$
    for every $x$ in $H$, and hence for every $[x]\in \conj$ by Corollary \ref{cor.1}. The conclusion then follows by \cite[Thm.~1.4]{BCLS18} and noting that 
    $$\ell_{d_\rho}[x]=\log\left(\frac{\lam_1 (\rho(x))}{\lam_m(\rho(x))}\right)$$
    for every $[x]\in \conj$ and every Hitching representation $\rho$.

If instead we assume that the spectral radius inequality holds for all $x$ in a fixed homology class, then the conclusion still holds. Indeed, all of the theorems in Subsection \ref{sec.ros}, as well as their corollaries, also have versions for pairs of projective Anosov representations. That is, we can replace $d,d_\ast$ with the log-norm functions for Anosov representations $\rho:\G \to \PSL_m(\R), \rho_\ast:\G \to \PSL_{m_\ast}(\R)$. In this setting, the logarithm of the spectral radii $ \log\lambda_1(\rho(x)), \log\lambda_1(\rho_\ast(x))$ play the role of the stable translation length functions. The proofs are the same as for the pseudo metrics in $\Dc_\G$, since for these hyperbolic distance-like functions we have a good theory of Manhattan curves \cite[Sec.~7.3]{cantrell.tanaka.2}. We leave the details to the reader.
\end{proof}




\subsection{Growth rate of quasi-convex subgroups and more} \label{sec.gr}
In this subsection we apply our results to study growth rates of quasi-convex subgroups. Our main result is Theorem \ref{thm.growthrateE}, from which we deduce Theorems \ref{thm.gapword} and \ref{thm.growthtrees} from the introduction. We first recall the notion of boundary pseudo metrics \cite{cantrell.reyes}.

\begin{definition}\label{def.boundarypseudo}
 The space of \emph{(Manhattan) boundary pseudo metrics} on $\G$ is $\partial \calD_\G:=\ov\calD_\G \bs \calD_\G$. Equivalently, this is the set of all pseudo metrics in $\ov\calD_\G$ that are not quasi-isometric to pseudo metrics in $\calD_\G$.  
\end{definition}

\begin{theorem}\label{thm.growthrateE}
   Given $\widehat{d} \in \partial \calD_\G$ and $L\geq 0$, let $E=\{[x]\in \conj : \ell_{\widehat{d}}[x]\leq L\}$. Then for any $d\in \calD_\G$ with exponential growth rate $v_d$ we have
    \begin{equation*}
        v_d(E)< v_d.
    \end{equation*}
\end{theorem}

This result already implies Theorem \ref{thm.growthtrees} since for $\G$ either a free or surface group, small actions on $\R$-trees induce pseudo metrics in $\partial \calD_\G$ \cite[Thm.~1.6 (3)]{cantrell.reyes}.

Before proving Theorem \ref{thm.growthrateE}, we deduce the following corollary, which immediately implies Theorem \ref{thm.gapword}.

\begin{corollary}\label{cor.dfw}
    Let $\G$ be a non-elementary hyperbolic group and let $d\in \calD_\G$ be any pseudo metric with exponential growth rate $v_d$. Then for any infinite index quasi-convex subgroup $H<\G$ we have
    \begin{equation*}
        \limsup_{T\to \infty}{\frac{1}{T} \log (\#\{x\in H : d(o,x) \leq T\})}<v_d.
    \end{equation*}
\end{corollary}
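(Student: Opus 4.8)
The plan is to argue by contradiction. Suppose $H<\G$ is an infinite index quasi-convex subgroup, $d\in\calD_\G$, and
$v_d(H):=\limsup_{T\to\infty}\tfrac1T\log\#\{x\in H: d(o,x)\le T\}$ equals $v_d$; we will derive a contradiction. First I would pass from elements of $H$ to conjugacy classes of $\G$. Since $H$ is quasi-convex it is hyperbolic and $d|_H\in\calD_H$, so Proposition \ref{prop.samegrowthrate} applied to $(H,d|_H)$ replaces the element count by a count of $H$-conjugacy classes with the same exponential rate; combined with $\ell_{d|_H}[h]_H=\ell_d[h]_\G$ for $h\in H$ and a standard bound (using quasi-convexity of $H$) on how many $H$-conjugacy classes of bounded translation length fuse into a single $\G$-conjugacy class, this yields $v_d(E)=v_d$, where $E:=\{[x]\in\conj:[x]\cap H\neq\varnothing\}$.

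Next I would introduce an auxiliary pseudo metric $d_\ast\in\calD_\G$: keep $d$ unchanged along each coset $Hg$ of $H$, but attach a bounded, multiplicative penalty to paths that travel between distinct cosets. Concretely, let $d_\ast$ be the path pseudo metric built from the step cost that equals $d$ on pairs lying in a common coset and a fixed multiple of $d$ otherwise; this is the type of pseudo metric supplied by the Manhattan boundary of $\scrD_\G$, and one records from that theory that $d_\ast\in\calD_\G$ (it is coarsely bi-Lipschitz to $d$, hence still Gromov hyperbolic and quasi-isometric to a word metric). Since $o\in H$, a path may be taken inside $H$, so $d_\ast(o,h)=d(o,h)$ for every $h\in H$, and therefore $\ell_{d_\ast}[x]=\ell_d[x]$ for all $[x]\in E$. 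Consequently $v_{d_\ast}(E)=v_d(E)=v_d$, while $d_\ast\ge d$ forces $v_{d_\ast}\le v_d$; together with $v_{d_\ast}(E)\le v_{d_\ast}$ this pins down $v_{d_\ast}=v_d=v_{d_\ast}(E)$. Thus $E$ satisfies hypotheses (1)--(3) of Theorem \ref{thm.main2} for the pair $(d,d_\ast)$, with $f\equiv 0$, and the theorem gives that $d$ and $d_\ast$ are roughly isometric and $\ell_d\equiv\ell_{d_\ast}$ on all of $\conj$.

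The contradiction, and the main obstacle, is to show that in fact $\ell_{d_\ast}\not\equiv\ell_d$. Because $H$ has infinite index, the coset space $H\backslash\G$ is $d$-unbounded, so there should be a conjugacy class $[x_0]\notin E$ --- that is, $x_0$ not conjugate into $H$ --- whose axis crosses between infinitely many cosets. The relevant fact is that for a quasi-convex subgroup $H$, a conjugacy class lies in $E$ if and only if its axis stays within bounded Hausdorff distance of a single coset, so a class outside $E$ visits infinitely many cosets, and periodicity of the axis forces at least one inter-coset crossing per period. Making this quantitative --- showing that the penalty accumulated along $\langle x_0\rangle$ grows linearly and is not absorbed by shortcuts, so that $\ell_{d_\ast}[x_0]>\ell_d[x_0]$ --- is where the quasi-convexity of $H$ and the precise description of $d_\ast$ coming from the Manhattan boundary of $\scrD_\G$ must be used carefully. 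Once this is in place, $\ell_{d_\ast}[x_0]>\ell_d[x_0]$ contradicts the conclusion of Theorem \ref{thm.main2}, and the corollary follows; the special case where $d$ is roughly similar to a word metric can alternatively be handled by rescaling and invoking \cite{dfw}.
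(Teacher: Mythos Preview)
Your overall strategy matches the paper's: argue by contradiction, introduce an auxiliary pseudo metric $d_\ast\ge d$ that agrees with $d$ on translation lengths coming from $H$ but not everywhere, and then invoke Theorem~\ref{thm.main2}. The gaps lie exactly where you flag them.

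First, your construction of $d_\ast$ is not really a construction. A ``multiplicative penalty for crossing cosets'' is not a definition, and the reference to the Manhattan boundary is misplaced: points of $\partial\Dc_\G$ are \emph{not} in $\Dc_\G$, so they cannot serve as $d_\ast$ directly. The paper does something much cleaner: it takes $\hat d$ to be the pseudo metric on $\G$ coming from the coned-off Cayley graph $\Cay(\G,S,H)$. Because $H$ is quasi-convex, infinite and of infinite index, $\hat d\in\partial\Dc_\G$ (Proposition~6.13 of \cite{cantrell.reyes}); because $H$ is coned to a point, $\ell_{\hat d}[h]=0$ for every $h\in H$. One then sets $d_\ast:=d+\hat d$, which lies in $\Dc_\G$ by \cite[Lem.~5.3]{cantrell.reyes}. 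This single choice dissolves your ``main obstacle'': $\ell_{d_\ast}-\ell_d=\ell_{\hat d}$, and by the very definition of $\ov\Dc_\G$ the translation length function $\ell_{\hat d}$ is non-constant, so $\ell_{d_\ast}\ne\ell_d$ is immediate. No argument about axes crossing cosets is needed.

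Second, your passage from elements of $H$ to $\G$-conjugacy classes via $H$-conjugacy classes and a ``fusion bound'' is unnecessary extra work. The paper instead embeds $H$ into the conjugation-invariant set $A=\{x\in\G:\ell_{\hat d}[x]=0\}$ and applies Proposition~\ref{prop.samegrowthrate} directly to $A$ (as a subset of $\G$), obtaining $v_d(H)\le v_d(A)=v_d([A])$. This lets one run the contradiction argument with the set $E=[A]=\{[x]:\ell_{\hat d}[x]\le 0\}$ rather than $\{[x]:[x]\cap H\ne\varnothing\}$, and it bypasses both the need to apply Proposition~\ref{prop.samegrowthrate} to $(H,d|_H)$ (which would also require checking that $H$ is non-elementary) and any fusion estimate.
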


\begin{proof}[Proof of Corollary \ref{cor.dfw}]
    Let $d\in \calD_\G$ and $H<\G$ be an infinite quasi-convex subgroup of infinite index. By Proposition \ref{prop.samegrowthrate} and Theorem
    \ref{thm.growthrateE}, it is enough to construct a pseudo metric $\widehat{d}\in \partial \calD_\G$ such that $H\subset A:=\{x\in \G : \ell_{\widehat{d}}[x]=0\}$. To this end, let $S\subset \G$ be a finite, symmetric generating set, and consider the coned-off Cayley graph $\Cay(\G,S,H)$ with the corresponding graph metric. If $\widehat{d}$ is any pseudo metric on $\G$ induced by the action of $\G$ on $\Cay(\G,S,H)$, then $\widehat{d}\in \partial \calD_\G$ since $H$ is infinite and of infinite index \cite[Prop.~6.14]{cantrell.reyes}. Moreover, the action of $H$ on $\Cay(\G,S,H)$ is by elliptic isometries, and hence $H\subset A$.
\end{proof}

\begin{proof}[Proof of Theorem \ref{thm.growthrateE}]
Let $d\in \calD_\G$, $\widehat{d}\in \partial \calD_\G$, and $E\subset \conj$ be as in the statement, and let $v_d$ be the exponential growth rate of $d$. 
Assume, for the sake of contradiction that $v_d(E)=v_d$, and consider the pseudo metric $d_\ast:=d+\widehat{d}$. This pseudo metric belongs to $\calD_\G$ by \cite[Lem.~5.3]{cantrell.reyes}. Note that $\ell_{d_{\ast}}=\ell_{d}+\ell_{\widehat{d}}$, so that $|\ell_{d_{\ast}}[x]-\ell_{d}[x]|\leq L$ for all $[x]\in E$. On the other hand, the assumption $\widehat{d}\in \partial \calD_\G$ implies that $\ell_{\widehat{d}}$ is non-zero, and hence $d$ and $d_\ast$ are not roughly isometric. 
Since $d\leq d_\ast$ we also have 
$$v_{d_\ast}\leq v_d =v_d(E)=v_{d_\ast}(E)\leq v_{d_\ast}.$$
In particular we obtain $v_{d_\ast}(E)=v_{d_\ast}$, so that $d,d_\ast$ and $E$ satisfy conditions (1)-(3) of Theorem \ref{thm.main2}. This is the desired contradiction since $d$ and $d_\ast$ are not roughly isometric, which completes the proof of the proposition.
\end{proof}


\subsection{Uniform local marked length spectrum rigidity}

We conclude the article by proving Corollary \ref{cor.main}. Given $\del\geq 0$, we let $\scrD^\delta_\G \subset \scrD_\G$ denote the collection of metric structures represented by a $\delta$-hyperbolic pseudo metric with exponential growth rate $1$. In \cite[Thm.~1.5]{reyes}, the second author proved that when non-empty, $\scrD_\G^\del$ is a proper subspace of $\scrD_\G$ for every $\del\geq 0$.

\begin{proof}[Proof of Corollary \ref{cor.main}]
By Theorem \ref{thm.continuous} we know that $\overline{\beta}$ is continuous in each variable. Since $\scrD_\G^\del$ is proper, it follows that for any $d \in \Dc_\G$ and $D_1, D_2 >0$ the set $Y([d], D_1,D_2) = \{ [d_\ast] \in \scrD^\delta_\G : D_1 \le \Delta([d],[d_\ast]) \le D_2 \}$ is compact. The result follows from these observations and Lemma \ref{lem.eqality}, which guarantees that $\overline{\beta}([d],[d_\ast])$ is non-zero for $[d_\ast]\in Y([d], D_1, D_2)$.
\end{proof}





\noindent\small{Department of Mathematics, 
University of Warwick,
Coventry, CV4 7AL, UK}\\
\small{\textit{Email address}: \texttt{stephen.cantrell@warwick.ac.uk}\\
\\
\small{Department of Mathematics, Yale University, New Haven, CT 06511, USA}\\
\small{\textit{Email address}: \texttt{eduardo.c.reyes@yale.edu}}\\

\end{document}